\newcommand{\red}{\color{black}}
\newcommand{\rred}{\color{black}}
\newcommand{\RM}[1]{\MakeUppercase{\romannumeral #1{.}}}
\newcommand {\oo}{\mbox{\boldmath $0$}}
\newcommand {\pprime}{{2}{}}
\newcommand {\uu}{\mbox{\boldmath $u$}}
\newcommand {\vv}{\mbox{\boldmath $v$}}
\newcommand {\ww}{\mbox{\boldmath $w$}}
\newcommand {\ee}{\mbox{\boldmath $e$}}
\newcommand {\nn}{\mbox{\boldmath $n$}}
\newcommand {\VV}{\mbox{\boldmath $V$}}
\newcommand {\RR}{{\Bbb R}}
\newcommand {\ZZ}{{\cal Z}}
\newcommand {\II}{{\Bbb I}}
\newcommand {\EE}{{\Bbb E}}
\newcommand {\JJ}{{\Bbb J}}
\newcommand {\MM}{{\Bbb M}}
\newcommand {\eij}{\hat{e}}
\newcommand {\pp }{\mbox{\boldmath $\psi $}}
\newcommand {\hfrac}{\mbox{$\frac{h^1}{h^2}$}}
\newcommand {\hfraci}{\mbox{$\frac{h^2}{h^1}$}}\newcommand {\hfracdi}{\mbox{$\frac{h^2-h^1}{h^1}$}}
\newcommand {\barh}{\mbox{$\bar{h}$}}
\newcommand {\dive}{\mbox{div}}
\newcommand {\z }{\mbox{\boldmath $z$}}
\newcommand {\veps}{{\varepsilon}}
\newcommand{\fr}{)\hspace{-0.6mm})}
\newcommand{\fl}{(\hspace{-0.6mm}(}
\newcommand {\jedna}{\mathds{1}}
\def\softd{{\leavevmode\setbox1=\hbox{d}%
          \hbox to 1.05\wd1{d\kern-0.4ex{\char039}\hss}}}
\newtheorem{theorem}{Theorem}[section]
\theoremstyle{plain}
\newtheorem*{theorem*}{Theorem}
\newtheorem{lemma}{Lemma}[section]
\newtheorem{corollary}{Corollary}[section]
\newcounter{remark}[section]
\renewcommand{\theremark}{\thesection.\arabic{remark}}
\newenvironment{remark}{\refstepcounter{remark}{\bf
Remark~\theremark.}}{\hfill$\diamond$}
\begin{document}




\title{On the convergence of  fixed point iterations\\ for the moving geometry \\in a fluid-structure interaction problem}


\author{Anna Hundertmark\\ Institute of Mathematics, University of Mainz, Germany}


\maketitle
\begin{abstract}
In this paper  a fluid-structure interaction problem for the incompressible Newtonian fluid is studied. We prove the convergence of an iterative process with respect to the computational domain geometry. 
    In our previous  works on numerical approximation of similar problems we refer this  approach as  the global iterative method \cite{LZ2008, LZ2010}.
    This iterative approach can be understood as a linearization  of  the so-called geometric nonlinearity of the underlying model.  The proof of the convergence is based on  the Banach fixed point argument, where the contractivity of the corresponding mapping is shown due to the continuous dependence of the weak solution on the given domain deformation.
   This estimate is obtained by  remapping the  problem onto a fixed domain and using appropriate divergence-free test functions involving the difference of two solutions.
\end{abstract}

{\bf keywords}: {fluid-structure interaction,  fixed point method, continuous dependence on data, uniqueness, Banach fixed point theorem, hemodynamics, incompressible Newtonian fluid}





\section{Introduction}

Mathematical analysis and numerical simulation of fluid-structure interaction (FSI) problems  is an intensively studied part of the computational fluid dynamics.
In FSI problems the computational domain deforms under the fluid forces. Thus, the domain deformation, governed by a structure equation, depends on the solution of the fluid equation.  In the literature this dependence  is referred to as {\em the geometric nonlinearity}. One possible  strategy  to  find a solution to such coupled problem is a linearization of the problem with respect to the geometric nonlinearity. The problem is solved at first for a known domain, deformed according to a given,  sufficiently smooth deformation function.  The second step is to prove  the existence of a fixed point for the mapping between the domain deformation and the solution.

In our previous joined works on this topic  we proved the existence and uniqueness of the weak solution for an  approximation of the fully coupled fluid-structure interaction problem for the  Navier-Stokes equations and a parabolic (viscoelastic) equation for the boundary deformation, see \cite{FZ10, ZAUdiss}. In these works a construction of  the unknown domain  through the  iterative process with respect to the domain geometry has been proposed. 
For a  semi-pervious approximation of the coupling condition,
the convergence of  this iterative process has been shown in \cite{FZ10} using the Banach fixed point argument. However, it was not possible to prove the convergence of this iterative process for the fully coupled original problem due to the lack of regularity of the domain deformation. 

Further results on the existence of a weak solution to the  fully coupled FSI problem for the Newtonian fluid and a  viscoelastic/elastic plate in 2D and 3D have been obtained by Grandmont et al. \cite{GRA05,GRA08}. In the case of a two-dimensional fluid flow and one-dimensional structure the authors obtained in \cite{GRA05,GRA08} the existence of a weak solution for the same regularity of the domain deformation  as in  \cite{FZ10} using the Schauder fixed point argument, without giving any details on the construction of the domain deformation 
and without providing  the uniqueness of the solution.
 Using a similar fixed point procedure for the geometry the existence of weak solution for a fluid-structure interaction problem for a generalized power-law shear-dependent fluid including the Newtonian case has been shown in our recent joint works \cite{HLN14, HLN14book}, see also \cite{ LENGpreprint}.

The continuous dependence of
the weak solution on the initial data and the uniqueness have been shown for the incompressible Newtonian fluid by Padula et al. in \cite{PAD10}. They considered a transformation of the solution between two  domains with different deformations that  preserves the solenoidal property of the solution.
For  further works on the mathematical analysis of similar FSI problems for Newtonian as well as non-Newtonian fluids  we refer also to \cite{LENGpreprint,
 PAD08,  COUTSHKO,CHENSHKO, NEU, LEQ, LENRU,CANICMM, CANICMMM, MUHAC, CANICM}
  and the references therein. 
\medskip
   
   In this work  
   we will deal with the geometric nonlinearity and study the iterative process with respect to the domain geometry for the fully coupled FSI problem presented in the following section.

\subsection{Mathematical model}
  
We consider a fluid flow problem in a two-dimensional channel representing a longitudinal cut of a vessel.
The fluid flow is governed by the momentum and the continuity
equation, written in  Cartesian coordinates  for the sake of simplicity,
\begin{eqnarray}
 {\rho\partial_t \vv}
+ \rho \left(\vv\cdot \nabla\right)\vv
-\dive \big[2{\mu}\, e( \vv)\big]
+\nabla p
&=&0,\label{i.1}
 \\\mbox{div}\,\vv&=&0.\nonumber
\end{eqnarray}
Here $\rho$ denotes the constant density of the fluid, $\vv=(v_1,v_2)^T$ the velocity vector, 
$p$ the pressure and $e(\vv)=\frac{1}{2}(\nabla \vv+\nabla \vv^T)$ the symmetric deformation tensor. We consider the fluid viscosity $\mu$   to be  constant.

The radial vessel wall deformation $\eta$ is modelled using the one-dimensional  viscoelastic string model. The  deformation $\eta$ is a function of longitudinal variable $x_1$ and time $t$  governed by the following structure equation
\begin{eqnarray}\label{i.55}
&&{\cal E}\rho\left[
\frac{\partial^2\eta }{\partial t^2}
-a\frac{\partial^2\eta }{\partial x_1^2}+b\eta
 + c\frac{\partial^5\eta }{\partial t\partial x_1^4}
-a\frac{\partial ^2 R_0}{\partial x_1^2}\right](x_1,t)
= \\
&&\hphantom{\hspace{20mm}}
\Big[-{\bf T}_f\nn\cdot\ee_2-P_w\Big](x_1,R_0(x_1)+\eta(x_1,t),t),\nonumber
\end{eqnarray}
 $x_1 \in(0,L), t \in (0,T)$.
Here ${\cal E}=\rho_w\hbar 
 \sqrt{1+(\partial_{x_1}R_0)^2}$, where $\rho_w$ is the density of the wall tissue and $\hbar$ its thickness. 
We assume that $\cal E$ is bounded and $a, \, b, \, c$  are positive constants describing the mechanical properties of the vessel wall tissue. Further, $R_0(x_1)$ stands for the reference radius of the cylinder (vessel), $P_w$  the external pressure acting on the deformable vessel wall,  $\nn$ 
 the normal vector to the moving  vessel wall and ${\bf T}_f=2\mu e(\vv)-p{\bf I}$  the Cauchy stress tensor. Similar  models for the wall deformation have been considered in  {\cite{GRA05,CANICM}, see also \cite{QUATUVE,QUA,QUAcla}}.
 \begin{figure}
\begin{center}
  \label{fig:domain}
  \includegraphics[scale=0.364]{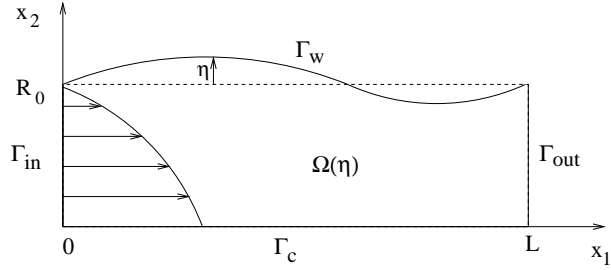}
\caption{Deformable computational domain}
\end{center}
 \end{figure}

The moving wall deforms under the fluid action, thus the computational domain for the fluid is determined by the unknown deformation $\eta$, i.e., 
 \[
\Omega(\eta(t))\equiv \left\{ (x_1,x_2); \ 0<x_1<L , \  0<x_2<R_0(x_1)+\eta(x_1,t)\right\}, \ 0<t<T,
\] where $R_0(x_1)$ 
describes the reference position  of the deformable wall. The boundary of $\Omega(\eta(t))$ consists of one  moving part $\Gamma_w(t)$, \[ \Gamma_w(t)\equiv\{(x_1, x_2);\ x_2=R_0(x_1)+\eta(x_1,t),\, x_1 \in (0,L)\},\]  and three fixed  boundaries $\Gamma_{in}, \, \Gamma_{out} $ and  $\Gamma_c$ 
denoting  the inflow, the outflow  and the bottom boundary, respectively, 
see Fig. \ref{fig:domain}.

The fluid problem (\ref{i.1}) and the structure equation (\ref{i.55}) are coupled through the natural kinematic condition describing the continuity of the velocities on the moving interface $\Gamma_w(t)$,
\begin{eqnarray}\label{dirichlet}
\vv\big(x_1, R_0(x_1)+\eta(x_1,t), t\big)=\big(0,{\partial_t \eta(x_1,t)}\big).
\end{eqnarray}
Moreover, the external force causing the wall deformation arises due to the fluid stress.  Thus, the second coupling condition, expressed by the right-hand side of (\ref{i.55}), has dynamical character  and  can be understood as the continuity of stresses.

We complete the system (\ref{i.1})--(\ref{dirichlet}) with Neumann-type boundary conditions with kinematic pressure at the inflow $\Gamma_{in}$ and outflow boundary $\Gamma_{out}$,
\begin{eqnarray}
\left(2\mu
\frac{\partial v_1}{\partial x_1}
-p+P_{ in} -\frac{\rho}{2}\left|v_1 \right|^2  \right)
(0,x_2,t)=0,\quad v_2(0,x_2,t)=0,\label{i.7}\\
\left(2\mu
\frac{\partial v_1}{\partial x_1}
-p+P_{ out} -\frac{\rho}{2}\left|v_1 \right|^2  \right)
(L,x_2,t)=0,\quad v_2(L,x_2,t)=0.
\label{i.8}
\end{eqnarray}
Here $P_{in},\, P_{out}$ are some given pressures  and  $0<x_2<{R_0(\tilde{x_1})+\eta(\tilde{x_1},t)} $ with $\tilde{x_1}=0,L$.
 On the bottom boundary $\Gamma_c$ the  flow symmetry is considered, 
\begin{equation}
v_2(x_1,0,t)=0 \ , \hspace{5mm} \mu \frac{\partial v_1}{\partial x_2}(x_1,0,t)=0,\quad\  0<x_1<L,\ 0<t<T.
\label{i.9}
\end{equation}
We equip equation (\ref{i.55})  with the following clamped boundary conditions,
\begin{eqnarray}\label{i.6}
&&\eta (0,t)=\eta (L,t)=0,    \hspace{3mm} \hspace{3mm}\eta_{x_1} (0,t)=\eta_{x_1} (L,t)=0.
\end{eqnarray} 
The initial conditions for the fluid (\ref{i.1}) and structure (\ref{i.55}) equations read as follows,
\begin{eqnarray}\label{i.10}
\vv (x_1,x_2,0)&=&\oo \hspace{5mm} \mbox{for any} \ 0<x_1<L, \\&&\hspace{22mm} 0<x_2<{R_0(x_1)},\nonumber
\\\eta (x_1,0)={\red \eta_{x_1}(x_1,0)=}\eta_t(x_1,0)&=&0\hspace{5mm} \mbox{for any} \ 0<x_1<L.\nonumber
\end{eqnarray}
The problem (\ref{i.1})--(\ref{i.10}) represents a mathematical model for the flow of an incompressible Newtonian 
fluid in a deformable domain, where the deformation of the upper wall obeys the viscoelastic string model. Such a  fully coupled fluid-structure interaction problem can be used in  hemodynamics  to model the blood flow in (large) elastic vessels.
The geometry of computational domain $\Omega(\eta)$ depends on unknown solution  $\eta$.


{ As already mentioned,  one possible and commonly used way to show the existence result for such fully coupled FSI problems
  is to linearize the problem with respect to the geometric nonlinearity and consequently to find a fixed point of this nonlinearity. At the beginning, a smooth enough  domain deformation function $\delta$ is given. The computational domain is then approximated by this function,  $ \Omega(\eta) \approx \Omega(\delta)$. Next, existence of a weak solution $(\vv,\eta)$  defined on $\Omega(\delta)$ is proven. Finally, a fixed point of the mapping
  ${\cal F}(\delta)=\eta$ defined by the weak formulation on $\Omega(\delta)$ is found.
  By applying the Schauder fixed point theorem for the mapping ${\cal F}$ it can be shown that at least one
  $\eta^\star$ s.t. $\eta^\star={\cal F}(\eta^\star)$ exists.
  Furthermore,  if the assumptions of  the Banach fixed point theorem are satisfied, the uniqueness of the fixed point and the convergence of the  iterative process ${\cal F}(\eta^{k-1})=\eta^{k}$ can be obtained additionally.
  The Banach fixed point theorem is based on the contractivity of the mapping $\cal{F}$, which in our case follows from the continuous dependence of the weak solution $(\vv, \eta)$ on the given domain deformation function $\delta$.

  Let us note that in \cite{FZ10} the convergence of the iterative process described above and 
 the  uniqueness of the fixed point have been obtained only for a semi-pervious and pseudo-compressible approximation of the original fluid-structure interaction problem. The right-hand side of structure equation  (\ref{i.55}) has been replaced by $\kappa(v_2-\eta_t)$ for a fixed $\kappa \in { \RR}$. This   implies  more regularity for $\eta$ than in the original case  (\ref{i.55}).
  The continuous dependence on data and consequently  the contractivity of the fixed point mapping has been shown in  \cite{FZ10} only for non-solenoidal solutions and fixed $\kappa$. We would like to point out here, that  by letting $\kappa \to \infty$ the original coupled problem with conditions (\ref{i.55}), (\ref{dirichlet}) can be obtained.  However, existence of the weak solution  for this case has been studied in \cite{FZ10} only for the known domain $\Omega(\delta)$.
 
  A significant difficulty in the proof of the continuous dependence on data 
  for  incompressible fluids lies in the requirement 
  of divergence-free test functions involving the difference of two solutions defined in two different domains. 
One has to test with transformed solutions, as in   \cite{PAD10}. As a  consequence, the matrix of the transformation between two domains is reflected in the  terms to be estimated. This leads to technical difficulties when fluid is considered to be non-Newtonian, e.g.,  with shear-dependent viscosity obeying             the power-law
$\mu(|e(\vv)|)= \mu{(1+|e( \vv)|^2)^{\frac{p-2}{2}}},\ p>1$.  The application of the Banach fixed point argument for  FSI  problems for power-law fluids is up to authors knowledge an open problem.

\medskip

The main goal of this paper is to show  the convergence of the above described iterative process
for the Newtonian fluid. Here we prove the  essential estimate for the continuous dependence on data and then we apply the Banach fixed point theorem to show the convergence of this iterative method,  which has not yet been done for the fully coupled FSI problems up to our knowledge. As a consequence we also get  uniqueness of the weak solution. 
 We refer to this  iterative process as {\em the global iterative method with respect to the domain deformation} and  use it in our  numerical approximation of the proposed problem, see, e.g., \cite{LZ2008, LZ2010}. In these numerical studies the convergence of these iterations has been observed experimentally.  The present paper provides the first  theoretical proof of this convergence for incompressible Newtonian fluids.
  
The paper is organized as follows. In  Section \ref{sec:linearization} we introduce the linearization of the  problem  (\ref{i.1})--(\ref{i.10}) with respect to the so-called geometric nonlinearity and present known results on the existence of weak solutions on a given deformable domain $\Omega(\delta)$.  In Section \ref{sec:cont_dependence} we derive the estimate on continuous dependence of the weak solution on the boundary pressure and the given deformation. This estimate is formulated in Theorem \ref{th:contin_dependence}. Finally,  in Section \ref{sec:fixed_point} we apply the Banach fixed point theorem and prove the convergence of the global iterative method.
 

}

\section{Linearization with respect to domain geometry $\Omega(h)$}
\label{sec:linearization}
 In order to linearize the problem with respect to the geometric nonlinearity we assume  to have a given deformation function $\delta(x_1,t)$ in the space $H^1(0,T;H_0^2(0,L)) \cap W^{1,\infty}(0,T;L^2(0,L))$. We denote $ h:=R_0+\delta,\ R_0(x_1)\in C^2[0,L]$, and assume
\begin{eqnarray}\label{assumptions}
&&0<\alpha \leq h(x_1,t)\leq \alpha^{-1}, \ \quad
 \left|\frac{\partial h (x_1,t)}{\partial x_1}\right|
+\int_0^T\left|\frac{\partial h(x_1,t)}{\partial t}\right|^2dt
\leq K \qquad \qquad
\end{eqnarray}
for given constants $\alpha>0,\, K>0$. 
We  consider a time dependent domain, which  deforms according to the given function $\delta$, i.e.  we approximate $\Omega(\eta)\approx\Omega(\delta)=:\Omega(h)$.  Due to this approximation the geometric coupling in the original fluid-structure interaction problem  (\ref{i.1})--(\ref{i.10}) has been decoupled.

\subsection{Existence of a weak solution}
\label{sec:existence}
The existence of a  weak solution of the problem (\ref{i.1})--(\ref{i.10}) defined on the given domain with deformation $\delta$, i.e. on $\Omega(h)$ has been shown in \cite[Theorem~5.1]{HLN14} using so called $(\kappa, \veps)$ - approximation, see also   \ref{appendix}.

More precisely, for  boundary pressures\footnote{ In view of the transformation onto $D$ we have $q_{in}(0,y_2,t)=P_{in}(0,x_2,t)/\rho$, $q_{out}(L,y_2,t)=P_{out}(L,x_2,t)/\rho$, $q_{w}(y_1,1,t)=P_{w}(x_1,h(x_1,t),t)/\rho$.}
$q_{in}, q_{out} \in L^\pprime(0,T;L^2(0,1))$ and $q_{w} \in L^\pprime(0,T;L^2(0,L))$  there
exists a weak solution $(\uu, \eta)$, where $\uu$  denotes the transformed fluid velocity $\vv$ into the rectangular domain $D\equiv(0,L)\times(0,1),$ i.e.,
\\ $ \uu(y_1,y_2,t)\stackrel{\text{def}}{=}\vv(y_1,h(y_1,t)y_2,t),\quad y_1=x_1,\, y_2=\frac{x_2}{h(x_1,t)},\, y\in D, x\in \Omega(h),$
with the following properties:
 \begin{itemize}
\item[i)]
 $(\uu,\eta)\in [L^2(0,T;\VV_{\hspace{-1mm}div})\times H^{1}(0,T;{ H^2_0}(0,L))]\cap
[L^{\infty}(0,T;L^2(D)) \times \\ W^{1,\infty}(0,T; L^2 (0,L))]$,
\item[ii)]  the distributive (transformed) time derivative $\bar{\partial}_t(h\uu)\stackrel{\text{def}}{=} \frac{\partial (h {\footnotesize \uu})}{\partial t}
-\frac{1}{h}\frac{\partial h}{\partial t}\frac{\partial (y_2h{\footnotesize \uu} )}{\partial y_2}$
 belongs to 
$ L^\pprime(0,T;X^*) \oplus
L^{4/3}((0,T)\times D)$ 
and \begin{eqnarray*}\label{derivative_hu}
\int_0^T \int_D\left\{  h\uu\cdot \frac{\partial \pp}{\partial t}
+\frac{\partial h}{\partial t}\frac{\partial (y_2\uu )}{\partial y_2}\cdot \pp
\right\}dy\,dt=
-\int_0^T \Big\langle\bar{\partial}_t(h\uu),\pp \Big\rangle_X dt,
\end{eqnarray*}
where
\begin{eqnarray} \label{spaceX}
X&=&\{ \pp \in  \VV_{\hspace{-1mm}div}\, ;
 \   \pp_2\big |_{S_w}\in { H^2_0}(0,L)\},\\ \label{spaceV}
\VV_{\hspace{-1mm}div} &\equiv&\left\{ \ww\in W^{1,2}(D): \dive_h\ww=0 \ a.e.\ \mbox{on}\ D,\right.\\
&&\left.\ w_1=0\hspace{2mm} \mbox{on} \ S_{\it
w}\mbox{ and } w_2=0 \hspace{2mm} \mbox{on} \ S_{\it in}\cup S_{\it out}\cup S_{\it c}   \right\}, \nonumber\\
S_w&=&\{(y_1,1):0<y_1<L \},
\qquad S_{in}=\{(0,y_2):0<y_2<1 \}, \nonumber\\
S_{out}&=&\{(L,y_2):0<y_2<1 \},
\qquad S_c=\{(y_1,0):0<y_1<L \},\nonumber
\end{eqnarray}
\item[iii)] $\uu$ satisfies the transformed divergence-free condition 
\begin{equation}\dive_h  \uu \;\stackrel{\rm def}{=}\;
\frac{\partial u_1}{\partial y_1}
-\frac{y_2}{h}\frac{\partial h}{\partial y_1}\frac{\partial u_1}{\partial y_2}
+\frac{1}{h}
\frac{\partial u_2}{\partial y_2}=0\ \ \mbox{a.e.  on}\  D,\label{divergence}
\end{equation}
\item[iv)] 
$u_2(y_1,1,t)=\partial_t \eta(y_1,t)$ for a.e. $y_1 \in (0,L),\, t \in (0,T)$,\\
 and the following integral identity holds:
 \begin{eqnarray}
 &  0&=\label{weak_form}
\int_0^T\left \langle \bar{\partial_t}(h\uu),\pp \right\rangle_{X}+\big \langle \partial_t  \eta_t , \xi \big \rangle dt +
\int_0^T\hspace{-2mm}\bigg\{\fl \uu , \pp \fr_h +  b_{h}(\uu , \uu , \pp ) \qquad  \\
&&+\int_0^1 h(L) q_{out}(y_2)\psi_1\, (L,y_2)
-h(0) q_{in}(y_2,t)\psi_1\,(0,y_2)\,dy_2 
 \nonumber \\\nonumber
&&+\int_0^L \hspace{-3mm}\big(q_{w}+\frac{1}{2}\frac{\partial h}{\partial t}u_2
\big)\psi_2\,(y_1,1)
\\&&\nonumber \qquad +c\,\frac{\partial^3 \eta}{\partial t \partial y_1^2}\frac{\partial^2\xi}{\partial y_1^2}
+a \frac{\partial \eta }{\partial y_1}\frac{\partial\xi}{\partial y_1}  \nonumber 
-{ a}\frac{\partial^2 R_0}{\partial y_1^2}\xi
+b\eta \xi \,dy_1\hspace{-0mm}\bigg\}\,dt \nonumber
\end{eqnarray}
for every test function
\begin{eqnarray}\label{test_functions}
&& \pp \in
H^1(0,T;\VV_{\hspace{-1mm}div}),
\hspace{2mm}
{\pp_2\big |_{S_w}\in  H^1(0,T;{ H^2_0}(0,L)),\, \pp(y,0)=0,\ \qquad }
\\&&
{  \xi(x_1,t)={ E}\,\pp_2(y_1,1,t)},\ E:={\cal E}\rho.\nonumber
\end{eqnarray}
\end{itemize}
In (\ref{weak_form}) the following notations for the transformed viscous and convective terms have been used:
\begin{eqnarray}\label{eij}
&&\fl \uu,\pp \fr_h:=\frac{\mu}{\rho}\int_D h\eij_{h}(\uu):
\eij_{h}(\pp) dy, \quad 
(\eij_{h}(\uu))_{ij}=\frac{1}{2}(\hat{\partial}_i(u_j)+\hat{\partial}_j(u_i)), \qquad  \\
&&\mbox{where}\ \hat{\partial}_1=\left(\frac{\partial}{\partial y_1}-\frac{y_2}{h}\frac{\partial h}{\partial y_1}\frac{\partial}{\partial y_2}\right ),
\quad \hat{\partial}_2={ \frac{1}{h}\frac{\partial}{\partial y_2} },\nonumber
\end{eqnarray}
 \begin{eqnarray}
 b_h(\uu,\z,\pp)&:= &\int_D
\left( h u_1\left( \frac{\partial \z}{\partial
y_1}-\frac{y_2}{h}\frac{\partial h}{\partial y_1} \frac{\partial \z}{\partial
y_2}\right)+ u_2\frac{\partial \z}{\partial y_2}\right)\cdot \pp
\label{4.3}  
\,dy\hspace{0mm}\\
&&-\frac{1}{2}\int_0^1R_0 u_1 z_1\psi_1\,(L,y_2)\,dy_2
+\frac{1}{2}\int_0^1 R_0 u_1 z_1 \psi_1\,(0,y_2)\,dy_2 \hphantom{ b(\uu,\z}\nonumber \\
&&-\frac{1}{2}\int_0^L u_2 z_2\psi_2\,(y_1,1)\,dy_1.\nonumber
\hspace{-1cm}
\end{eqnarray}
Using integration by parts the following  property of the trilinear form can be proven, see \cite[Lemma 3.7]{HLN14}, 
\begin{eqnarray}
 \label{becko-rozdiel} b_h(\uu, \z, \pp)&=&\int_D \frac{1}{2}B_h(\uu, \z, \pp )-\frac{1}{2}B_h(\uu, \pp, \z)dy \hspace{3mm}\\
\mbox{where}\quad B_h(\uu,\z,\pp )&:=&
\left(hu_1\left(
\frac{\partial \z}{\partial y_1}-\frac{y_2}{h}\frac{\partial h}{\partial y_1}
\frac{\partial \z}{\partial y_2}\right)+
u_2\frac{\partial \z}{\partial y_2}\right)\cdot \pp.\nonumber
\end{eqnarray}
Moreover,  for the distributive time derivative of our weak solution $\uu$ described above holds:
\begin{equation}\label{timederivativeproperty2}
\int_0^T \left \langle \bar{\partial_t}(h\uu),\uu \right\rangle_{X}dt=
\frac{1}{2}\int_D |\uu|^2(T)h(T)-
\frac{1}{2}\int_0^T\int_0^L{\partial_t h}|u_2|^2dy_1 dt, 
\end{equation}
 see \cite[Section 4.2.1 and Appendix A]{HLN14},{ \rred compare the analogous property  in \cite[p. 381]{GRA05}}.

\section{Continuous dependence on the domain deformation}
\label{sec:cont_dependence}
In this section we prove, that the  weak solution is continuously dependent on the boundary pressure data $q_{in/out/w}$ as well as on the domain deformation $h$. The last dependence will be very useful to show the contractivity of the fixed point mapping defined by the iterative process with respect to the domain, that will be studied  in Section~\ref{sec:fixed_point}.

To obtain the continuous dependence estimate 
we use test functions involving the difference of two solutions $\uu^1,\, \uu^2$. Unfortunately, due to the dependency of the divergence operator on $h$, cf. (\ref{divergence}),  the difference $\uu^1- \uu^2$ is not divergence-free. More precisely,  neither $\dive_{h^1}(\uu^1-\uu^2)=0$,  nor $\dive_{h^2}(\uu^1-\uu^2)=0$ in general.
This  is related to the deformability of the domain  and the fact that $\vv^1$ is divergence-free in $\Omega(h^1)$ and $\vv^2$ is divergence-free in $\Omega(h^2)$, i.e., with respect to another coordinates.  In order to design admissible test functions, see Section \ref{sec:admissible}, the first solution is transformed into the definition domain of the second solution.
A similar technique has been previously used  in  \cite{PAD10} to prove the continuous dependence of the weak solution on the initial data  for similar problems in two dimensions using the Eulerian coordinates. In comparison to  \cite{PAD10},  our  problem is also remapped to the fixed reference  domain $D$.
After rewriting the weak formulation by means of  the admissible test functions, see Section \ref{sec:admissible}, multiple error terms arise. Their estimates can be found in Section \ref{sec:estimation}. 
In Section \ref{sec:final_estim} we present the final estimate, which yields the main result of Section \ref{sec:cont_dependence}, 
 the continuous dependence of the weak solution  on the domain deformation $h$ and boundary pressures $q_{in}, \, q_{out},\, q_{w}$ formulated in the following theorem:

\begin{theorem*}[Continuous dependence on data]\ \\
Let $(\uu^1,\eta^1),\, (\uu^2, \eta^2)$ be  two weak solutions of the initial boundary value problem (\ref{i.1})--(\ref{i.10}) transformed to the fixed rectangular  domain $D$ satisfying (\ref{weak_form}). Let the corresponding domain deformation be given  by some functions $h^1, \, h^2$ satisfying (\ref{assumptions}).
Let the transformed boundary pressures  $q_{in/out/w}^1$ and $q_{in/out/w}^2$ belong to $L^2(0,T;L^2(S_{in/out/s}))$, respectively.
Then for almost all $t \in (0,T)$ 
it holds:
\begin{eqnarray*}\label{contin_dependence}
&&\frac{\alpha}{2}\|\RR\uu^1-\uu^2\|_{L^2(D)}^2(t)+\frac{{\alpha\mu}}{2\rho}\tilde{c}_{Ko}\int_0^t\|\RR\uu^1-\uu^2\|_{W^{1,2}(D)}^2ds
  \\&&\nonumber+\frac{E}{2}\|\eta_t^1-\eta_t^2\|_{L^2(0,L)}^2(t)+\frac{bE}{2}\|\eta^1-\eta^2\|_{L^2(0,L)}^2(t)
  +\frac{aE}{2}\|\eta^1_{y_1}-\eta^2_{y_1}\|_{L^2(0,L)}^2(t)
  \\&&\nonumber\ \ +cE\int_0^t\|\eta_{t}^1-\eta_{t}^2\|_{H^2(0,L)}^2ds 
\\&&
\nonumber \leq C_{7}\left(
\int_0^t\|q_{in}^1-q_{in}^2\|^2_{L^2(0,L)}+\|q_{out}^1-q_{out}^2\|^2_{L^2(0,L)}+\|q_{w}^1-q_{w}^2\|^2_{L^2(0,1)} ds \right.
\\&&\hphantom{\leq C_{8}[}\left.
+{ \omega(t)}\left[\nonumber\|h^1-h^2\|^2_{W^{1,\infty}(0,t;L^2(0,L))}+\|h^1-h^2\|^2_{L^\infty(0,t;H^{2} (0,L))}
\right]\right),
\end{eqnarray*}
where \begin{eqnarray*}
\omega(t)=\int_0^t\hspace{-2mm}\|\uu^{1}\|_{W^{1,2}(D)}\hspace{-1mm}+\|\uu^{1}\|^2_{W^{1,2}(D)}\hspace{-1mm}+ \|h^1_t\|_{W^{1,\infty}(0,L)}^2\hspace{-1mm}+\|h^2_t\|_{L^\infty(0,L)}^2
  \hspace{-1mm} +{\rred  \|q^1_{w}\|_{L^2{(S_{w})}}^2 }ds
 \\ \quad \mbox{and} \quad \omega(t)\downarrow 0 \quad  \mbox{for}\quad t\downarrow 0.\hspace{8.4cm}
\end{eqnarray*}
Here $\tilde{c}_{Ko}$ is the coercivity constant of the viscous form coming from the Korn inequality, $\alpha, \, K$ are given by (\ref{assumptions}), $\mu,\rho, E, a, b, c $ are given by the physical model  and  $C_{7}$ is a constant depending on $\alpha,\alpha^{-1},K, \, \tilde{c}_{Ko}^{-1}$ and  on the norms  $\|h^i\|_{L^\infty(0,T;H^2(0,L))}$,  $\|\uu^i\|_{L^\infty(0,T;L^2(D))},$ $i=1,2$.
The matrix $\RR$ in the above estimate arises due to the transformation of weak solution from   $\Omega(h^1)$ to $\Omega(h^2)$. 

\end{theorem*}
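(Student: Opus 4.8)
The plan is to test the two weak formulations with a single, carefully constructed divergence-free test function built from the difference of the two solutions, after transporting the first solution into the coordinates of the second domain. Concretely, denote by $\uu^i$ the transformed velocities on $D$ associated with $h^i$, $i=1,2$. Since $\dive_{h^1}\uu^1=0$ but $\dive_{h^2}\uu^1\neq 0$, I would first introduce the change of variables mapping $\Omega(h^1)$ onto $\Omega(h^2)$ (equivalently, on $D$, the rescaling $y_2\mapsto (h^1/h^2)y_2$ composed with identity in $y_1$), and set $\RR\uu^1$ to be the push-forward of $\uu^1$ that preserves the solenoidal constraint, so that $\dive_{h^2}(\RR\uu^1)=0$ on $D$. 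This is exactly the mechanism behind the matrix $\RR$ in the statement; the Piola-type transformation guarantees $\RR\uu^1-\uu^2\in\VV_{\hspace{-1mm}div}$ relative to $h^2$, hence (together with the corresponding trace $\eta^1_t-\eta^2_t$ and $\xi=E(\psi_2|_{S_w})$) it is an admissible test function for the weak formulation \eqref{weak_form} written with $h=h^2$. For the first equation (with $h=h^1$) I test with the same object pulled back, i.e. with $\RR^{-1}(\RR\uu^1-\uu^2)$; subtracting the two identities produces, on the left, the energy-type quantity appearing in the theorem, and on the right a long list of commutator/error terms measuring the discrepancy between $h^1$ and $h^2$ and between the transformation operators.

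Next I would organize the left-hand side. The time-derivative terms, handled via the integration-by-parts identity \eqref{timederivativeproperty2} (applied with $h^2$ and with the admissible test function equal to the difference), yield $\tfrac12\|\RR\uu^1-\uu^2\|^2_{L^2(D)}(t)$ weighted by $h^2\ge\alpha$, plus a lower-order term $-\tfrac12\int\partial_t h^2|(\RR u^1-u^2)_2|^2$ which is absorbed using \eqref{assumptions}. The structure terms give, after integrating $\partial_t\eta_t\cdot(\eta^1_t-\eta^2_t)$, $a\eta_{y_1}\cdot(\eta^1-\eta^2)_{y_1}$, $b\eta\cdot(\eta^1-\eta^2)$ in time, the quantities $\tfrac E2\|\eta^1_t-\eta^2_t\|^2_{L^2}+\tfrac{bE}{2}\|\eta^1-\eta^2\|^2_{L^2}+\tfrac{aE}{2}\|\eta^1_{y_1}-\eta^2_{y_1}\|^2_{L^2}$ evaluated at $t$, plus the dissipative term $cE\int_0^t\|\eta^1_t-\eta^2_t\|^2_{H^2}$. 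The viscous form $\fl\cdot,\cdot\fr_{h^2}$ applied to the difference is coercive by Korn's inequality with constant $\tilde c_{Ko}$ and weight $h^2\ge\alpha$, giving the $W^{1,2}(D)$ dissipation term; any mismatch between $\eij_{h^1}$ and $\eij_{h^2}$ is moved to the right-hand side.

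The right-hand side is where the real work lies, and it is the step I expect to be the main obstacle. Every term that formally "should cancel" between the two weak formulations in fact leaves a remainder proportional to $h^1-h^2$ (and its $x_1$- and $t$-derivatives), because the differential operators $\hat\partial_i$, the volume weight $h$, the trilinear form $b_h$, the transformation matrix $\RR$, and its time derivative all depend on $h$. I would group these into: (i) viscous commutators $\fl\uu^1,\cdot\fr_{h^1}-\fl\RR\uu^1,\cdot\fr_{h^2}$, estimated by $\|\uu^1\|_{W^{1,2}}\|h^1-h^2\|_{H^2}\|\RR\uu^1-\uu^2\|_{W^{1,2}}$ and then split by Young's inequality so that the $\|\RR\uu^1-\uu^2\|_{W^{1,2}}^2$ part is absorbed by the coercive term; (ii) convective commutators from $b_{h^1}(\uu^1,\uu^1,\cdot)-b_{h^2}(\RR\uu^1,\RR\uu^1,\cdot)$, treated using the antisymmetric form \eqref{becko-rozdiel} to avoid losing a derivative on the test function, with the quadratic-in-$\uu^1$ growth feeding the $\|\uu^1\|^2_{W^{1,2}}$ contribution to $\omega$; (iii) time-derivative commutators, producing the $\|h^1_t\|_{W^{1,\infty}}^2+\|h^2_t\|_{L^\infty}^2$ terms in $\omega$; (iv) the boundary-pressure terms, which split cleanly into a genuine data-difference part $\|q^i_{in}-q^i_{out}\|$, $\|q^1_w-q^2_w\|$ and a part where $q^1_w$ multiplies $(h^1-h^2)$-type factors, contributing $\|q^1_w\|^2_{L^2(S_w)}$ to $\omega$; (v) the term $\tfrac12\partial_t h\,u_2\psi_2$ on $\Gamma_w$, handled by trace estimates and \eqref{assumptions}. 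After absorbing all terms carrying a factor $\|\RR\uu^1-\uu^2\|_{W^{1,2}}^2$ or $\|\eta^1_t-\eta^2_t\|^2_{H^2}$ into the coercive left-hand side (choosing the Young constants small, using $\tilde c_{Ko}$ and $\alpha$), and bounding all remaining factors by the stated norms of $h^1-h^2$ times the integrand defining $\omega(t)$, I arrive at a Grönwall-type inequality
\[
\Phi(t)\le C_7\Big(\textstyle\int_0^t \|q^1-q^2\|^2\,ds + \omega(t)\,\|h^1-h^2\|^2_{\ast}\Big) + C\int_0^t \Phi(s)\,ds,
\]
where $\Phi(t)$ is the left-hand side of the theorem and $\|h^1-h^2\|^2_{\ast}$ abbreviates the $W^{1,\infty}(0,t;L^2)+L^\infty(0,t;H^2)$ norm. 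Applying Grönwall's lemma and noting that $\omega(t)\downarrow 0$ as $t\downarrow 0$ because each summand in its integrand lies in $L^1(0,T)$ (by property i) of the weak solution and \eqref{assumptions}), absorbing the resulting exponential factor into $C_7$, gives exactly the claimed estimate. The delicate point throughout is bookkeeping the $\RR$-dependent terms so that no derivative is lost on the test function and every "$h^1-h^2$" factor carries the right norm — in particular that the $H^2(0,L)$ norm of $h^1-h^2$ (not merely $H^1$) is what is needed to control the second-order operators $\hat\partial_i$ and the Piola matrix, which is why the theorem is stated with $L^\infty(0,t;H^2(0,L))$.
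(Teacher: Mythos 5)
Your overall strategy mirrors the paper's (Piola-type matrix $\RR$ to produce divergence-free differences, grouping of viscous/convective/time-derivative/boundary commutators, Korn for coercivity, Young to absorb, Gr\"onwall at the end, $H^2$ in $y_1$ to control $\hat\partial_i$), but there is a genuine gap in your handling of the time-derivative terms. You propose to test the $h^1$-formulation with $\pp^1=\RR^{-1}\pp^2$, the $h^2$-formulation with $\pp^2=\RR\uu^1-\uu^2$, and subtract, claiming that the left-hand side then yields $\tfrac12\int_D|\pp^2|^2(t)h^2(t)$ via (\ref{timederivativeproperty2}). That identity produces the energy from $\int_0^T\big\langle\bar\partial_t^{h^2}(h^2\pp^2),\pp^2\big\rangle$, but after the naive subtraction what one actually has is $\big\langle\bar\partial_t^{h^1}(h^1\uu^1),\pp^1\big\rangle-\big\langle\bar\partial_t^{h^2}(h^2\uu^2),\pp^2\big\rangle$, and these two quantities differ. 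Using the paper's commutation identity $\bar\partial_t^{h^2}(h^2\RR\uu^1)=\JJ^{-1}\bar\partial_t^{h^1}(h^1\uu^1)+\EE_1$ and $\pp^2=\RR\pp^1$, one finds
\begin{equation*}
\big\langle\bar\partial_t^{h^2}(h^2\pp^2),\pp^2\big\rangle
=\big\langle \RR^T\JJ^{-1}\bar\partial_t^{h^1}(h^1\uu^1),\pp^1\big\rangle
-\big\langle\bar\partial_t^{h^2}(h^2\uu^2),\pp^2\big\rangle
+\int_D\EE_1\cdot\pp^2\,dy ,
\end{equation*}
so the first pairing carries the extra symmetric positive-definite weight $\RR^T\JJ^{-1}=\frac{h^1}{h^2}\JJ^{-T}\JJ^{-1}\neq\II$. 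The discrepancy $\big\langle(\II-\RR^T\JJ^{-1})\bar\partial_t^{h^1}(h^1\uu^1),\pp^1\big\rangle$ is a distributional pairing against an $X^*\oplus L^{4/3}$ object and cannot simply be estimated and thrown onto the right-hand side.

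To close this step the paper does not subtract the two weak identities directly; instead it re-derives a \emph{weighted} weak formulation for $(\uu^1,\eta^1)$ in which the whole identity is premultiplied by $\RR^T\JJ^{-1}$ (justified because this matrix is symmetric, positive definite, bounded via (\ref{assumptions}), and because coercivity of the resulting weighted viscous form follows from a generalized Korn inequality after diagonalizing $\RR^T\JJ^{-1}$). Only then can the pairing $\big\langle\RR^T\JJ^{-1}\bar\partial_t^{h^1}(h^1\uu^1),\pp^1\big\rangle$ be replaced by the corresponding weighted viscous, convective and boundary integrals, and the subsequent estimation proceeds as you outline. Your proposal omits this construction entirely and asserts the energy identity for the difference as if it followed immediately from (\ref{timederivativeproperty2}); this is the missing ingredient. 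The remainder of your estimation plan (Lemma \ref{lemma:viscous}-type viscous commutators, use of (\ref{becko-rozdiel}) for the trilinear form, trace and interpolation inequalities, the $\omega(t)$ bookkeeping, and Gr\"onwall) is consistent with the paper.
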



\subsection{Admissible test functions}
\label{sec:admissible} 
To prove the above theorem let us assume  $(\uu^1,\eta^1), (\uu^2,\eta^2)$ are two weak solutions of our problem transformed 
onto a fixed reference domain $D$, both satisfying the  weak formulation (\ref{weak_form}) with $h^1,\, h^2$, respectively.  


Now we provide some preliminaries concerning admissible test functions (\ref{test_functions}), involving a difference of the two weak solutions.
Let us  denote the matrix of the  transformation of variables between $\Omega(h^1)$ and $\Omega(h^2)$ by  $\JJ=\frac{d X}{dx}$,  $X\in\Omega(h^1)$, $x\in\Omega(h^2)$    
 and the inverse matrix by $\JJ^{-1}$. In the reference coordinates $y\in D$ they read,
\begin{eqnarray*}
\JJ=\begin{bmatrix}1 & 0\\y_2 h^2\partial_{ y_1}(\frac{h^1}{h^2})& \frac{h^1}{h^2}\end{bmatrix},
\quad \JJ^{-1}=\begin{bmatrix}1 & 0\\y_2{h^1}\partial_{ y_1}(\frac{h^2}{h^1})& \frac{h^2}{h^1}\end{bmatrix}.
\end{eqnarray*} 
Let  $\RR:= J \JJ^{-1}$, where  $J=|\JJ|$ is the determinant of the transformation matrix $\JJ$. We have  $J=h^1/h^2$ and
\begin{eqnarray}\label{matrix_R}
\RR=J \JJ^{-1}=\begin{bmatrix}\frac{h^1}{h^2} & 0\\-y_2 h^2\partial_{ y_1}(\frac{h^1}{h^2})& 1\end{bmatrix}, \quad \RR^{-1}=\begin{bmatrix}\frac{h_2}{h_1} & 0\\-y_2 h^1\partial_{ y_1}(\frac{h^2}{h^1})& 1\end{bmatrix}.
\end{eqnarray} 
Next step is to design admissible divergence-free test function according to (\ref{test_functions}), that contain a difference of the two weak solutions.  To this end we consider two sets of test functions for the fluid velocity
\begin{eqnarray}
\pp^1=\uu^1-\RR^{-1}\uu^2,   \qquad \label{pps}
\pp^2=\RR\uu^1-\uu^2, 
\\ \mbox{i.e.,}\ \qquad \pp^1=\RR^{-1}\pp^2,\qquad \qquad \   \pp^2=\RR\pp^1. \qquad\   \nonumber
\end{eqnarray}  
It holds that $\dive_{h^2}\RR\uu^1=0$.  Indeed, since $\RR\uu^1=\big[\frac{h^1}{h^2}u^1_1,-y_2h^2\partial_{y_1}(\frac{h^1}{h^2}) u^1_1+u^1_2\big]^T,$ we have
 \begin{eqnarray*}
\dive_{h^2}\RR\uu^1&=&\frac{\partial}{\partial y_1}\left(\hfrac u^1_1\right)-\frac{y_2}{h^2}\partial_{y_1}h^2\frac{\partial}{\partial y_2}\left(\hfrac u^1_1\right)+\frac{1}{h^2}\frac{\partial}{\partial y_2}\left(u^1_2-y_2h^2\partial_{y_1}\big(\hfrac\big)u^1_1 \right)\\
&=&\hfrac\partial_{y_1}u^1_1 - \frac{y_2}{h^2}\partial_{y_1}h^2\hfrac\partial_{y_2}u^1_1  + \frac{1}{h^2}\partial_{y_2}u^1_2 - y_2\partial_{y_1}\big(\hfrac\big)\partial_{y_2}u^1_1\\
&=& \hfrac \partial_{y_1}u^1_1 + \frac{1}{h^2}\partial_{y_2}u^1_2  -   
\frac{y_2}{h^2}\partial_{y_2}u^1_1 \partial_{y_1}h^1=\hfrac \dive_{h^1}\uu^1,   
\end{eqnarray*}   cf. (\ref{divergence}).
Thus we get $\dive_{h^2}\RR\uu^1=J\dive_{h^1}\uu^1\equiv0$. Analogously one can  show $\dive_{h^1}\RR^{-1}\uu^2=J^{-1}\dive_{h^2}\uu^2=0$.
Thus $$\dive_{h^1}\pp^1=\dive_{h^2}\pp^2= 0,$$ 
i.e., $\pp^i$ is solenoidal with respect to $\Omega(h^i)$  for  $i=1,2.$
Moreover,  since the first component  of the velocity  $u^{i}_1|_{S_w}=0$  we have $\RR\uu^1|_{S_w}=[0,u^1_2]^T$. Similarly $\RR^{-1}\uu^2|_{S_w}=[0,u^2_2]^T.$ Thus   the trace of  $\pp^i$ on ${S_w}$ is equal to $[0,u_2^1-u^2_2]^T$ for both $i=1,2$, 
and  due to  $u^{i}_2|_{S_w}=\eta_t^i$ a.e., we  obtain $\psi^i_2|_{S_w}=\eta_t^1-\eta_t^2$. Consequently, the test functions $(\pp^1,\xi)$,  $(\pp^2,\xi)$ from (\ref{pps}) with  $ \xi= E\eta_t^1-E \eta_t^2$ are admissible.

\medskip

\subsection{Subtraction of weak formulations}
{\rred In order to obtain the final estimate let us first pay attention to the time derivative terms in the  weak formulation (\ref{weak_form}). Since 
$\pp^2=\RR \uu^1-\uu^2$ and due to the linearity of $\bar{\partial}_t^h$ we can write:
\begin{eqnarray}\label{diff_timeder}
&&\left\langle\bar{\partial}_t^{h^2}(h^2\pp^2),\pp^2 \right\rangle_X=\left\langle\bar{\partial}_t^{h^2}(h^2\RR\uu^1),\pp^2 \right\rangle_X-\left\langle\bar{\partial}_t^{h^2}(h^2\uu^2),\pp^2 \right\rangle_X.
\end{eqnarray}



The term on the left-hand side  can rewritten using
the  property (\ref{timederivativeproperty2}) of the distributive time derivative. 
Note that due to the coupling between the velocity $\uu^1$ and the domain deformation $h^1$ the first functional  on the right-hand  side of (\ref{diff_timeder}) is not yet defined through (\ref{weak_form}) as it is the case of the second term. Indeed,  we  cannot assert that $\RR \uu^1$ is a weak solution associated with $h^2$. Therefore, we have to investigate  the object $\bar{\partial}_t^{h^2}(h^2\RR\uu^1)$.
}
To this end we derive the following equality
\begin{equation}\label{ALE_timeder}
\bar{\partial}_t^{h^2}(h^2\RR\uu^1)=\JJ^{-1}\bar{\partial}_t^{h^1}(h^1\uu^1)+\EE_1,
\end{equation} where the additional error term $\EE_1$ reads
\begin{eqnarray}\label{matrix_E1}
\EE_1=
\begin{bmatrix}
[\frac{h^1_t}{h^1}-\frac{h^2_t}{h^2}] \partial_{y_2}(y_2 h^1u^1_1)
\\
[ \frac{h^1_t}{h^1}-\frac{h^2_t}{h^2}] y_2h^2\left( \frac{\partial u^1_2}{\partial y_2}-y_2 E
\frac{\partial u^1_1}{\partial y_2}\right)+y_2 
u^1_1 [ \partial_t h^2 E-h^2\partial_t E ]
\end{bmatrix}
\end{eqnarray}
with $E:=\frac{h^1_{y_1} h^2-h^1 h^2_{y_1} }{h^2}=h^2\partial_{y_1}({\hfrac})$.
To show (\ref{ALE_timeder}), (\ref{matrix_E1}) tedious but straightforward manipulations have been performed as follows.
\\For the first component  of $h^2\RR\uu^1$ we have
\begin{eqnarray*}
\bar{\partial}_t^{h^2}(h^2\RR\uu^1)_1&=&
\partial_t(h^1 u^1_1)-\frac{1}{h^2}h^2_t\partial_{y_2}(y_2 h^1 u^1_1)\\
&=&\bar{\partial_t}^{h^1}(h^1 u^1_1)+\partial_{y_2}(y_2 h^1 u^1_1) \left[\frac{h^1_t}{h^1}-\frac{h^2_t}{h^2}\right ], 
\end{eqnarray*}
compare the definition of the distributive time derivative in Section \ref{sec:existence}. 
\\Since $(\RR\uu^1)_2=u^1_2-y_2Eu_1^1$,  we can write for the second component
\begin{eqnarray*}
\bar{\partial}_t^{h^2}(h^2\RR\uu^1)_2&=&
\bar{\partial_t}^{h^2}(h^2u_2^1)-\bar{\partial_t}^{h^2}(h^2y_2E u^1_1)\\
&=&\bar{\partial_t}^{h^2}(h^2u_2^1)-\partial_t(y_2h^2E u^1_1)+\frac1{h^2}\frac{\partial h^2}{\partial t}\frac{\partial(y_2^2 h^2 E u^1_1) }{\partial {y_2}} \\
&=&\bar{\partial_t}^{h^2}(h^2u_2^1)-y_2E\bar{\partial_t}^{h^2}(h^2u_1^1)-y_2u^1_1(h^2\partial_t E-E\partial_t h^2).
\end{eqnarray*}
Using the fact that $ \bar{\partial_t}^{h^2}(h^2u_i^1)= \frac{h^2}{h^1}\bar{\partial_t}^{h^1}(h^1u_i^1)+y_2h^2\partial_{y_2} u^1_i[\frac{h^1_t}{h^1}-\frac{h^2_t}{h^2}],\, i=1,2,$ and replacing $-y_2E$ by $y_2\hfrac h^1\partial_{y_1}(\hfraci)$ in the second term on the right-hand side of last equation, we finally arrive at 
\begin{eqnarray*}
\bar{\partial}_t^{h^2}(h^2\RR\uu^1)_2&=&\frac{h^2}{h^1}\bar{\partial_t}^{h^1}(h^1u_2^1)+y_2 h^1 \partial_{y_1}\left(\frac{h^2}{h^1}\right)\bar{\partial_t}^{h^1}(h^1u^1_{1})
\\&& + \left[\frac{h^1_t}{h^1}-\frac{h^2_t}{h^2}\right]y_2h^2\left( \frac{\partial u^1_2}{\partial y_2}-y_2 E
\frac{\partial u^1_1}{\partial y_2}\right)+y_2 
u^1_1 [ \partial_t h^2 E-h^2\partial_t E ].
\end{eqnarray*}
Thus (\ref{ALE_timeder}), (\ref{matrix_E1}) hold true, see also the definition of the matrix $\JJ^{-1}$ above.

Finally, using (\ref{timederivativeproperty2}) and (\ref{ALE_timeder}), (\ref{matrix_E1}) and   $\RR\pp^1= \pp^2$   we obtain from (\ref{diff_timeder})
\begin{eqnarray*}
&&\int_0^T\left\langle\JJ^{-1}\bar{\partial}_t^{h^1}(h^1\uu^1),\RR\pp^1 \right\rangle_X-\left\langle\bar{\partial}_t^{h^2}(h^2\uu^2),\pp^2 \right\rangle_X+\int_D \EE_1 \cdot \pp^2 dy\,  dt\quad  \\\nonumber
&&\qquad 
 = \frac{1}{2}\int_D |\pp^2|^2(T)h^2(T)dy
  - \frac{1}{2}\int_0^T\int_0^L \partial_th^2|\psi_2^2|^2 dy_1 dt.
\end{eqnarray*}
Replacing $\left\langle\JJ^{-1}\bar{\partial}_t^{h^1}(h^1\uu^1),\RR\pp^1 \right\rangle_X$ by  $\left\langle\RR^T\JJ^{-1}\bar{\partial}_t^{h^1}(h^1\uu^1),\pp^1 \right\rangle_X$  we obtain
\begin{eqnarray}\label{timeder_diff}
  \frac{1}{2}\int_D |\pp^2|^2(T)h^2(T)dy  dt&\hspace{-2mm}=&\hspace{-2mm}\int_0^T\hspace{-2mm}\left\langle\RR^T\JJ^{-1}\bar{\partial}_t^{h^1}(h^1\uu^1),\pp^1 \right\rangle_X-\left\langle\bar{\partial}_t^{h^2}(h^2\uu^2),\pp^2 \right\rangle_X\hspace{-1mm} dt
\qquad    \\\nonumber&& + \frac{1}{2}\int_0^T\hspace{-2mm}\int_0^L \partial_th^2|\psi_2^2|^2 dy_1 
 +\int_D \EE_1 \cdot \pp^2 dy\,  dt.
\end{eqnarray}
{\rred In order to obtain  the continuous dependence  estimate,  one needs to replace the pairings on  the right-hand side of (\ref{timeder_diff})   according to the associated  weak formulations. However, concerning the first pairing, the weak formulation for $\uu^1,\eta^1 $ has to be  multiplied with the  matrix $\RR^T\JJ^{-1}=\hfrac\JJ^{-T}\JJ^{-1} $.
  Note that $\|\RR^T\JJ^{-1}\|_\infty$ is bounded from above by some constant dependent on $\alpha, K$, cf. (\ref{assumptions}). Moreover this matrix is symmetric and positive definite with (strongly) positive eigenvalues. Thus the existence of a weak solution $\uu^1,\eta^1 $ for  the transformed weak formulation associated with pairing $\left\langle\RR^T\JJ^{-1}\bar{\partial}_t^{h^1}(h^1\uu^1),\pp^1 \right\rangle_X$ can be derived using the same technique as for the original weak formulation associated with  $\left\langle\bar{\partial}_t^{h^1}(h^1\uu^1),\pp^1 \right\rangle_X$.
Indeed,  for the sake of brevity we point out here only the coercivity of the transformed viscous form 
$\int_Dh^1 \RR^T\JJ^{-1}\eij_{h^1} (\uu^1):\eij_{h^1} (\uu^1)dy\equiv \int_D h^1\eij_{h^1}(\uu^1)\RR^T\JJ^{-1} \eij_{h^1} (\uu^1)dy$. After the orthogonal diagonalization of $\RR^{T} \JJ^{-1}$ with eigenvalues $\lambda_{1,2}>0$, this viscous form can be bounded from below with $\int_D\alpha \lambda_{\min} |\eij_{h^1} (\uu^1)|^2dy $, 
consequently the generalized Korn inequality, cf. \cite{PO03}, can be applied, see also lines following (\ref{weak_form_diff2}).

Now we pay attention about the viscous, convective and boundary terms coming from the replacement of the first pairing on the right-hand side  of (\ref{timeder_diff}) according the associated transformed weak formulation. 
In analogy to (\ref{ALE_timeder}) 
  we rewrite them by means of the deformation function $h^2$.
}
\medskip
\\{\bf  Viscous terms:}\\
The following lemma will be  useful to handle the viscous terms.

\begin{lemma} \label{lemma:viscous} For the  deformation tensor $\eij_h$, see (\ref{eij}), it holds
\begin{equation}\label{visc_trans}
\eij_{h^1} (\vv)=\eij_{h^2} (\RR \vv)-[\EE(\vv)+\EE(\vv)^T],
\end{equation}
\begin{eqnarray}
\mbox{where }&&\EE(\vv)=\EE_2(\vv)F_{h^1}+\EE_2(\vv)\EE_3+\nabla\vv \EE_3
\label{matrix_E}
\\ \mbox{and  }&&\EE_2(\vv)=\begin{bmatrix}
v_1 \frac{E}{h^2}+\frac{h^1-h^2}{h^2}\partial_{y_1}v_1  & \frac{h^1-h^2}{h^2}\partial_{y_2}v_1\\
-y_2\partial_{y_1}(E v_1) & -E\partial_{y_2}(y_2v_1)
\end{bmatrix}\hspace{-1.5mm},\    E=h^2 \partial_{y^1}(\hfrac),\qquad \label{matrix_E2}
\\ && \EE_3=\frac{1}{2}\begin{bmatrix}
0 & 0\\
y_2 \frac{E}{h^1} & [\frac{1}{h^1}-\frac{1}{h^2}]
\end{bmatrix}, \quad 
 F_{h^1}=\frac{1}{2}\begin{bmatrix}
1 & 0\\
-\frac{y_2}{h^1} \partial_{y_1}{h^1} & \frac{1}{h^1}
\end{bmatrix}.\label{matrix_E3}
\end{eqnarray}
\end{lemma}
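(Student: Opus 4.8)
First I would note that (\ref{visc_trans}) is a pointwise matrix identity, to be proven by a direct computation organised around one observation: $\eij_h(\ww)$ is simply the symmetrisation of a ``chain--rule gradient''. From the definitions of $\hat\partial_1,\hat\partial_2$ in (\ref{eij}) one checks at once that, setting
\[
 G_h(\ww):=\nabla_y\ww\cdot(2F_h),\qquad
 2F_h=\begin{bmatrix}1 & 0\\ -\frac{y_2}{h}\,\partial_{y_1}h & \frac1h\end{bmatrix},
\]
with $F_h$ the matrix in (\ref{matrix_E3}), one has $\big(G_h(\ww)\big)_{ij}=\hat\partial_j w_i$, and hence $\eij_h(\ww)=\tfrac12\big(G_h(\ww)+G_h(\ww)^T\big)$. (Geometrically $2F_h=\partial y/\partial x$ for $x=(y_1,h(y_1,t)y_2)$, so $\eij_h(\ww)(y)$ is just the physical symmetric gradient $e(\vv)(x)$ when $\vv(x)=\ww(y)$.) In this way both sides of (\ref{visc_trans}) become symmetrisations of products of ordinary $y$--gradients with the matrices $2F_{h^1}$, $2F_{h^2}$, and the statement reduces to matrix algebra.

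Next I would establish the two elementary identities that carry the proof. First, from the explicit form of $\RR$ in (\ref{matrix_R}) one gets $\RR\vv-\vv=\big(\tfrac{h^1-h^2}{h^2}v_1,\,-y_2Ev_1\big)^T$, and differentiating this componentwise --- using only $\partial_{y_1}\big(\tfrac{h^1}{h^2}\big)=\tfrac{E}{h^2}$ --- reproduces exactly the matrix $\EE_2(\vv)$ of (\ref{matrix_E2}); hence
\[
 \nabla_y(\RR\vv)=\nabla_y\vv+\EE_2(\vv).
\]
Second, comparing (\ref{matrix_E3}) entrywise (with $F_{h^2}$ meaning $F_{h^1}$ from (\ref{matrix_E3}) after the replacement $h^1\to h^2$), the identities $y_2\big(\tfrac{\partial_{y_1}h^1}{h^1}-\tfrac{\partial_{y_1}h^2}{h^2}\big)=y_2\tfrac{E}{h^1}$ and $\tfrac1{h^2}-\tfrac1{h^1}$ yield $2F_{h^2}=2F_{h^1}+2\,\EE_3$.

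Finally, substituting both identities into $\eij_{h^2}(\RR\vv)=\tfrac12\big(G_{h^2}(\RR\vv)+G_{h^2}(\RR\vv)^T\big)$ and writing $M:=(\nabla_y\vv+\EE_2(\vv))(2F_{h^1}+2\EE_3)$, I would expand
\[
 M=\nabla_y\vv\cdot 2F_{h^1}+\nabla_y\vv\cdot 2\EE_3+\EE_2(\vv)\cdot 2F_{h^1}+\EE_2(\vv)\cdot 2\EE_3,
\]
and observe that the first term contributes $\tfrac12\big(\nabla_y\vv\cdot2F_{h^1}+(\nabla_y\vv\cdot2F_{h^1})^T\big)=\eij_{h^1}(\vv)$, while the last three sum to $2\,\EE(\vv)$ with $\EE(\vv)=\EE_2(\vv)F_{h^1}+\EE_2(\vv)\EE_3+\nabla_y\vv\,\EE_3$, exactly as in (\ref{matrix_E}); hence $\tfrac12(M+M^T)=\eij_{h^1}(\vv)+\EE(\vv)+\EE(\vv)^T$, which rearranges to (\ref{visc_trans}). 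I do not expect a genuine obstacle: every step is mechanical, and the only point needing care is the componentwise differentiation of the product $\RR\vv$ --- keeping the $y_2$--weights and the factors $\partial_{y_1}h^i$ straight --- which is precisely the ``tedious but straightforward'' computation behind (\ref{matrix_E2}). It is worth noting that only $\EE(\vv)+\EE(\vv)^T$ enters the statement, so $\EE(\vv)$ itself is pinned down only up to an antisymmetric summand; the form in (\ref{matrix_E}) is just the one delivered by the expansion above.
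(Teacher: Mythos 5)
Your proof is correct and follows the paper's argument step by step: both proofs rest on $\nabla(\RR\vv)=\nabla\vv+\EE_2(\vv)$, on writing $\eij_h(\ww)$ as the symmetrisation $\nabla\ww\,F_h+(\nabla\ww\,F_h)^T$, and on $F_{h^2}=F_{h^1}+\EE_3$, followed by a direct expansion; your $G_h(\ww)=2\nabla\ww\,F_h$ is only a cosmetic reformulation of the same identity. Note that your derivation correctly produces $\tfrac1{h^2}-\tfrac1{h^1}$ as the $(2,2)$ entry of $2(F_{h^2}-F_{h^1})$, which is what $F_{h^2}=F_{h^1}+\EE_3$ requires; the sign $\tfrac1{h^1}-\tfrac1{h^2}$ printed in (\ref{matrix_E3}) is a typo in the statement, not a flaw in your argument.
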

\begin{proof}
Taking into account the definition (\ref{matrix_R}) of the transformation matrix $\RR$, 
 one can easily obtain 
\begin{eqnarray}\label{nabla_rv}
\nabla (\RR \vv)=\nabla \vv+ 
\begin{bmatrix} \partial_{y_1}(\hfrac)v_1+\frac{h^1-h^2}{h^2}\frac{\partial v_1}{\partial y_1} & \frac{h^1-h^2}{h^2}\frac{\partial v_1}{\partial y_2}\\
-y_2\partial_{y_1}(h^2 \partial_{y_1}(\hfrac))v_1 & -h^2 \partial_{y_1}(\hfrac) \frac{\partial ( y_2 v_1)}{\partial y_2}
\end{bmatrix} =\nabla \vv+\EE_2(\vv).\ 
\end{eqnarray}
Moreover, it is easy to show that
\begin{equation}\label{def_tensor}
  \eij_{h}(\vv)=\nabla \vv F_{h}+(\nabla \vv F_{h})^T,
\end{equation}
see, e.g., \cite[Proof of Lemma 3.4]{HLN14}. Therefore 
we can write
\begin{eqnarray*}\eij_{h^2}(\RR \vv)=\nabla (\RR \vv)F_{h^2}+(\nabla \RR \vv F_{h^2})^T=( \nabla \vv+\EE_2(\vv))F_{h^2}+F_{h^2}^T( \nabla \vv^T+\EE_2(\vv)^T).
\end{eqnarray*}
One can  verify that $F_{h^2}=F_{h^1}+\EE_3$. Inserting this into  above equality we obtain
\begin{eqnarray*}
\eij_{h^2}(\RR \vv)&=&
\eij_{h^1}(\vv)+\EE_2(\vv)F_{h^1}+F_{h^1}^T\EE_2(\vv)^T+ 
\EE_2(\vv)\EE_3+\EE_3^T\EE_2(\vv)^T\\&&+ \nabla \vv\EE_3+\EE_3^T\nabla \vv^T, 
\end{eqnarray*} which proves (\ref{visc_trans}).
\end{proof}
{\rred Now, for the difference of the viscous terms on the right-hand side of  (\ref{timeder_diff}) 
  using $\RR=(\II+\EE_{\RR})$ where $(\EE_{\RR})_{11}=\frac{h^1-h^2}{h^2},(\EE_{\RR})_{21} =-y_2h^2\partial_{y_1}(\hfrac),(\EE_{\RR})_{12}=(\EE_{\RR})_{22}=0$ (see (\ref{matrix_R}))  and   Lemma \ref{lemma:viscous},  one has
\begin{eqnarray}\label{viscous_diff}
  &&\frac{\mu}{\rho}\int_0^T\int_D h^1\RR^{T}\JJ^{-1} \eij_{h^1}(\uu^1):\eij_{h^1}(\pp^1)- h^2 \eij_{h^2}(\uu^2):\eij_{h^2}(\pp^2)dy\,  dt
  \\&&=\frac{\mu}{\rho}\int_0^T\int_D h^2\RR \eij_{h^1}(\uu^1):\RR\eij_{h^1}(\pp^1)- h^2 \eij_{h^2}(\uu^2):\eij_{h^2}(\pp^2)dy\,  dt\nonumber
  \\&&=\nonumber\frac{\mu}{\rho}\int_0^T\hspace{-2mm}\int_D h^2\Big[\eij_{h^1}(\uu^1):\eij_{h^1}(\pp^1)  +\EE_{\RR}\eij_{h^1}(\uu^1):\RR\eij_{h^1}(\pp^1)+\eij_{h^1}(\uu^1):\EE_{\RR}\eij_{h^1}(\pp^1)
  \\&&\hphantom{\int_0^T\int_D h^2[}
  -\eij_{h^2}(\uu^2):\eij_{h^2}(\pp^2)\Big] dy dt\nonumber
\\&&\stackrel{(\ref{visc_trans}) }{=}\frac{\mu}{\rho}\int_0^T\hspace{-2mm}\int_D\hspace{-2mm} h^2\Big[
\left(\eij_{h^2}(\RR \uu^1)-\EE(\uu^1)-\EE(\uu^1)^T\right)\hspace{-1mm}:\hspace{-1mm}
\left(\eij_{h^2}(\RR \pp^1)-\EE(\pp^1)-\EE(\pp^1)^T\right)
\nonumber\\&& \hphantom{\int_0^T\int_D h^2[}
  +(\RR^T\EE_{\RR}+\EE_{\RR}^T)\eij_{h^1}(\uu^1):\eij_{h^1}(\pp^1)
  -\eij_{h^2}(\uu^2):\eij_{h^2}(\pp^2)\Big]
dy\,  dt
\nonumber\\&&=\frac{\mu}{\rho}\int_0^T\hspace{-2mm}\int_D h^2\Big[\eij_{h^2}(\pp^2):\eij_{h^2}(\pp^2)-\underbrace{(\EE(\uu^1)+\EE(\uu^1)^T):\eij_{h^2}(\pp^2)}_{:=I_1}
\nonumber\\&& \hphantom{\int_0^T}-\underbrace{\eij_{h^1}(\uu^1):(\EE(\pp^1)+\EE(\pp^1)^T)}_{:=I_2}
+\underbrace{(\RR^T\EE_{\RR}+\EE_{\RR}^T)\eij_{h^1}(\uu^1):\eij_{h^1}(\pp^1)}_{:=I_3}\Big]dy\,  dt.\nonumber
\end{eqnarray}
}
{\bf  Convective terms:}

\noindent{\rred
Recalling (\ref{becko-rozdiel}), we first rewrite the trilinear term using matrix the $F_h$ in  (\ref{matrix_E3}) as $$B_h(\uu,\z,\pp)=h(\uu\cdot F_h^T\nabla)\z\cdot \pp.$$ Thus  for the corresponding convective term from the first pairing on the right-hand side of  (\ref{timeder_diff}) holds due to the fact that $\RR\pp^1=\pp^2$, $h^1\JJ^{-1}=h^2\RR$  and  $F_{h^2}^{-1} F_{h^1}=\frac{h^2}{h^1}\RR$:
\begin{eqnarray*}
  && \int_D \RR^T\JJ^{-1} B_{h^1}(\uu^1,\uu^1,\pp^1)dy=\int_Dh^1\RR^T\JJ^{-1}(\uu^1\cdot F_{h^1}^T\nabla)\uu^1\cdot \pp^1 dy
  \\&&=
  \int_Dh^2\RR(\uu^1\cdot F_{h^1}^T\nabla)\uu^1\cdot \pp^2 dy
  =\int_Dh^2\RR(\uu^1\cdot\hfraci\RR^T F_{h^2}^T\nabla)\uu^1\cdot \pp^2 dy
  \\&&=\int_Dh^2\hfraci\RR\left(\RR\uu^1\cdot F_{h^2}^T\nabla\right)\uu^1\cdot \pp^2 dy
  \\&&=\int_DJ^{-1}\underbrace{h^2\left(\RR\uu^1\cdot F_{h^2}^T\nabla\right)(\RR\uu^1)\cdot \pp^2}_{}
  -J^{-1}\underbrace{h^2\left(\RR\uu^1\cdot F_{h^2}^T\nabla\right)(\RR)\uu^1\cdot \pp^2}_{} dy,
  \\&& =\int_D\quad \ J^{-1}\ B_{h^2}(\RR\uu^1, \RR\uu^1,\pp^2)\qquad  -  \quad J^{-1} T_{h^2}(\RR\uu^1,\uu^1,\pp^2) dy,
\end{eqnarray*}
where we recall $J=\hfrac$, see (\ref{matrix_R}), and $ T_{h}(\RR\uu,\vv,\pp):=h\left(\RR\uu\cdot F_{h}^T\nabla\right)(\RR)\vv\cdot \pp$.
\\Analogously for the second part of the convective term one can obtain
\begin{eqnarray*}
   \int_D \RR^T\JJ^{-1} B_{h^1}(\uu^1,\pp^1,\uu^1)
   =\int_D\hspace{-2mm}J^{-1} B_{h^2}(\RR\uu^1, \pp^2, \RR\uu^1) -  J^{-1} T_{h^2}(\RR\uu^1,\pp^1,\RR\uu^1) dy.
\end{eqnarray*}
Using the equalities above and the trilinearity of $B_{h^2}$, 
one gets for the difference
\begin{eqnarray}\label{becko1}
 &&  \int_D \RR^T\JJ^{-1} B_{h^1}(\uu^1,\uu^1,\pp^1)- B_{h^2}(\uu^2,\uu^2,\pp^2)dy
   \\\nonumber&&=\int_D\hspace{-2mm} B_{h^2}(\RR\uu^1,  \RR\uu^1,\pp^2) -  B_{h^2}(\uu^2,\uu^2,\pp^2)+ \hfracdi B_{h^2}(\RR\uu^1,  \RR\uu^1,\pp^2)
   \\\nonumber&&\qquad  -\hfraci T_{h^2}(\RR\uu^1,\uu^1,\pp^2) dy
   \\\nonumber&&=\int_D\hspace{-2mm} B_{h^2}(\pp^2,  \RR\uu^1,\pp^2) + B_{h^2}(\uu^2,\pp^2,\pp^2)+ \hfracdi B_{h^2}(\RR\uu^1,  \RR\uu^1,\pp^2)
 \\ \nonumber&&\qquad  -\hfraci T_{h^2}(\RR\uu^1,\uu^1,\pp^2) dy,
\end{eqnarray}
Analogously one can obtain for the difference of the second part of  convective term
\begin{eqnarray}\label{becko2}
 &&  \int_D \RR^T\JJ^{-1} B_{h^1}(\uu^1,\pp^1,\uu^1)- B_{h^2}(\uu^2,\pp^2,\uu^2)dy
   \\\nonumber&&=\int_D\hspace{-2mm} B_{h^2}(\pp^2,  \pp^2,\RR\uu^1)   + B_{h^2}(\uu^2,\pp^2,\pp^2)+\hfracdi B_{h^2}(\RR\uu^1, \pp^2 ,\RR\uu^1)
   \\\nonumber&& \qquad -\hfraci T_{h^2}(\RR\uu^1,\pp^1,\RR\uu^1) dy.
\end{eqnarray}

The difference of the convective  terms on the right-hand side of  (\ref{timeder_diff})
can be obtain by subtracting (\ref{becko1}) and (\ref{becko2}).  Using  (\ref{becko-rozdiel}) we can finally write 
\begin{eqnarray}\label{convective_diff}
  &&\int_0^T \frac1{2}\int_D \RR^T\JJ^{-1} [B_{h^1}(\uu^1,\uu^1,\pp^1)- B_{h^1}(\uu^1,\pp^1,\uu^1)]dy
  -b_{h^2}(\uu^2,\uu^2,\pp^2) dt\qquad 
\\\nonumber&&=\int_0^T b_{h^2}(\pp^2,\RR\uu^1,\pp^2)+ \hfracdi b_{h^2}(\RR\uu^1,\RR\uu^1,\pp^2)
\\&& \hphantom{\int_0^T} -\frac1{2}\int_D \hfraci[ T_{h^2}(\RR\uu^1,\uu^1,\pp^2)-T_{h^2}(\RR\uu^1,\pp^1,\RR\uu^1)]dy dt.\nonumber
\end{eqnarray}
}
{\bf  Boundary  terms:} \\Note that due to the boundary conditions on $S_w$,  the first component $(\pp^i(y_1,1,t))_1=( \uu^i(y_1,1,t))_1=0$, $i=1,2$. Recalling (\ref{pps}),  i.e., $(\pp^2)_2=(\RR\pp^1)_2=\RR_{22}\psi^1_2 $ and $\RR_{22}=1,\, \JJ^{-1}_{22}=\hfraci$, for the difference of boundary terms we can  write 
\begin{eqnarray}\label{bnd1}
&&\frac{1}{2}\int_0^T\int_0^L(\partial_t h^1\RR^T_{22}\JJ^{-1}_{22}u^1_2 \psi^1_2 -\partial_t h^2u^2_2 \psi^2_2)(y_1,1,t)dy_1 dt\\
&&=\frac{1}{2}\int_0^T\int_0^L\partial_t h^2[\JJ^{-1}_{22}u^1_2 \RR_{22}\psi^1_2-u^2_2 \psi^2_2]+\JJ^{-1}_{22}u^1_2 \RR_{22}\psi^1_2[\partial_t h^1-\partial_t h^2](y_1,1,t)dy_1 dt\nonumber
\\&&=\frac{1}{2}\int_0^T\int_0^L
\partial_t h^2\left[(\hfraci u^1_2-u^2_2) \psi^2_2
  \right]\nonumber
  +\hfraci u^1_2 \psi^2_2\partial_t [h^1- h^2](y_1,1,t)dy_1 dt 
 \\&& =\frac{1}{2}\int_0^T\hspace{-1mm}\int_0^L \hspace{-2mm}\partial_t h^2 |\psi^2_2|^2+ \left(\hfraci\partial_t [h^1- h^2]+\hfracdi{h^2_t}\right)u^1_2 \psi^2_2(y_1,1,t)dy_1 dt.\nonumber
\end{eqnarray}
Let us mention that the first term on the right-hand side of (\ref{bnd1}) 
vanishes in the sum with  the boundary term arising on the right-hand side of (\ref{timeder_diff}). Moreover, the pressure terms on $S_w$ can be simplified to 
\begin{eqnarray}\label{bnd2}
&&\hspace{-5mm}\int_0^T\int_0^L(\RR^T_{22}\JJ^{-1}_{22}q_{w}^1\psi^1_2 - q_{w}^2 \psi^2_2)(y_1,1,t)dy_1 dt
  \\&&\hspace{-5mm}\nonumber =\int_0^T\hspace{-2mm}\int_0^L\hspace{-1mm}(\hfraci q_{w}^1- q_{w}^2)\psi^2_2(y_1,1,t)
  dy_1 dt =\int_0^T\hspace{-2mm}\int_0^L\hspace{-2mm}(q_{w}^1- q_{w}^2)\psi^2_2+\hfracdi q_w^1\psi^2_2(y_1,1,t).
\end{eqnarray}
Concerning the boundary pressure terms on $S_{in/out}$ note that the second component of the velocity is zero, thus $(\pp^2)_1=(\RR\pp^1)_1=\RR_{11}\psi^1_1$, 
and $\JJ^{-1}_{11}=1$. Thus
\begin{eqnarray}\label{bnd3}
 && \hspace{-5mm}\int_0^T\hspace{-2mm}\int_0^1\hspace{-1mm}(q_{in}^1h^1\RR^T_{11}\JJ^{-1}_{11}\psi^1_1 - q_{in}^2h^2 \psi^2_1)(0,y_2,t)dy_2 dt=
  \\&&\hspace{-5mm}\nonumber
  \int_0^T\hspace{-2mm}\int_0^1\hspace{-1mm}(q_{in}^1h^1- q_{in}^2h^2) \psi^2_1
=\int_0^T\hspace{-2mm}\int_0^1\hspace{-1mm}[h^1(q_{in}^1- q_{in}^2)+(h^1-h^2)q_{in}^2 ]\psi^2_1(0,y_2,t)dy_2 dt.
\end{eqnarray}  In the same way the corresponding difference involving $q_{out}$ can be obtained.

\medskip
Finally, after replacing the {\rred difference of pairings on the right-hand side of  (\ref{timeder_diff}) with differences
of viscous terms, convective and boundary terms} 
according to (\ref{viscous_diff}), (\ref{convective_diff}) and (\ref{bnd1})--(\ref{bnd3}), respectively, the resulting equation 
reads
\begin{eqnarray}\label{weak_form_diff2}
&& \frac{1}{2}\int_D|\pp^2|^2(T)h^2(T) dy
+\frac{\mu}{\rho}\int_0^T\int_D h^2\eij_{h^2}(\pp^2):\eij_{h^2}(\pp^2)dy dt
\\\nonumber&&\hspace{0mm}+\frac{1}{2E}\int_0^L|\xi|^2(T)
+\frac{aE}{2}\left|\partial_{y_1}\bar{\eta}\right|^2(T)+\frac{bE}{2}|\bar{\eta}|^2 (T)
+\frac{c}{E}\int_0^T\int_0^L|\partial^2_{y_1}\xi|^2dy_1 dt\nonumber 
\\&&=
\int_0^T 
\int_D \EE_1 \cdot \pp^2 + \frac{\mu}{\rho}h^2(I_1+I_2)-I_3 \, dy\,  dt\nonumber
\\&&
-\int_0^T  
{\rred b_{h^2}(\pp^2,\RR\uu^1,\pp^2)+\hfracdi b_{h^2}(\RR\uu^1,\RR\uu^1,\pp^2)\, dt\nonumber}
\\&& {\rred+\int_0^T\frac1{2}\int_D \hfraci[ T_{h^2}(\RR\uu^1,\uu^1,\pp^2)-T_{h^2}(\RR\uu^1,\pp^1,\RR\uu^1)]dy \,dt.}\nonumber
\\&&\rred \int_0^T\int_0^1-(q_{in}^1- q_{in}^2)h^1 \psi^2_1(0,y_2,t)\nonumber
  +(q_{out}^1- q_{out}^2)h^1 \psi^2_1(L,y_2,t)dy_2  \nonumber 
\\&&\rred-\hspace{-2mm}\int_0^L\left[(q_{w}^1- q_{w}^2)+\hfracdi q_w^1  + \frac{1}{2} \left(\hfraci\partial_t [h^1- h^2]+\hfracdi{h^2_t}\right)u^1_2
\right]\psi^2_2(y_1,1,t) dy_1 dt,
\nonumber 
\end{eqnarray}
where the error terms $\EE_1,\,I_1,\, I_2,\, I_3$ are defined in 
(\ref{matrix_E1}), (\ref {matrix_E}),  (\ref {matrix_E2}),   (\ref {matrix_E3}), (\ref{viscous_diff}),
respectively, and $T_h(\uu,\vv,\pp):=h(\uu\cdot F_h^T\nabla)(\RR)\vv\cdot\pp$.

To derive an appropriate  estimate we apply  Korn's first inequality with variable coefficients, see, e.g., \cite{NEFF02}, \cite{PO03} in the second term on the left-hand side of the above equality. Indeed, using (\ref{def_tensor}) we can write
$$\frac{\mu}{\rho}\int_0^T\int_D h^2|\eij_{h^2}(\pp^2)|^2dy dt=\frac{\mu}{\rho}\int_0^T\int_D h^2|\nabla \pp^2 F_{h^2}+(\nabla \pp^2 F_{h^2})^T|^2,$$ where $F_{h^2}=F_{h^2}(y):\bar{\Omega}\to \RR^{2\times 2}$ is a continuous mapping with  $det(F_h^2)=\frac{1}{h^2}>\alpha>0$, cf. (\ref{matrix_E3}). Consequently, Corollary 4.1 in \cite{PO03} implies the existence of  a positive constant $c_{Ko}$ such that
$$\frac{\mu}{\rho}\int_D h^2|\eij_{h^2}(\pp^2)|^2dy\geq c_{Ko}\frac{\mu}{\rho}\alpha\int_D| \nabla\pp^2|^2dy,\ \forall \pp^2\in \VV_{\hspace{-1mm}div}. $$


 Next we will estimate the terms on the right-hand side of (\ref{weak_form_diff2})  in an appropriate way to apply the Gronwall lemma.

 \subsection{Estimate of the right-hand side of (\ref{weak_form_diff2})}
\label{sec:estimation}
In what follows we use the notation $\bar{h}=h^1-h^2$. We also express the terms with $E=h^2\partial_{y_1}(\hfrac)$ by means of the differences $\barh$, 
\begin{equation}\label{barh_E}
h^2 E=
h^2\partial_{y_1}h^1 -h^1\partial_{y_1}h^2= 
h^1\partial_{y_1}\bar{h}-\partial_{y_1}h^1\bar{h}.
\end{equation}
Before we start presenting the estimates, we introduce the following useful lemma.
\begin{lemma}\label{lemma:essup}
For all $\vv$ in the space $\VV_{\hspace{-1mm}div}$ (defined in  (\ref{spaceV})), it holds
$$\mbox{ ess\hspace{-3mm}}\sup_{0\leq y_1\leq L\ } \int_0^1|\vv|^2 dy_2\leq c \|\vv\|_{L^2(D)} \|\nabla \vv\|_{L^2(D)}. $$
\end{lemma}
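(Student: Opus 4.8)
The plan is to establish the estimate by a one–dimensional Sobolev embedding argument in the $y_1$–variable, applied to the slice function $f(y_1):=\int_0^1|\vv(y_1,y_2)|^2\,dy_2$, and then to remove the arising lower–order term by a Poincaré inequality that exploits the partial Dirichlet conditions built into $\VV_{\hspace{-1mm}div}$.

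First I would observe that for $\vv\in W^{1,2}(D)$ the function $f$ belongs to $W^{1,1}(0,L)$, with weak derivative $f'(y_1)=2\int_0^1\vv(y_1,y_2)\cdot\partial_{y_1}\vv(y_1,y_2)\,dy_2$; this follows from Fubini's theorem, the fact that $v_i(\cdot,y_2)\in W^{1,2}(0,L)$ for a.e.\ $y_2$, and the bound $|f'|\le\int_0^1\big(|\vv|^2+|\partial_{y_1}\vv|^2\big)\,dy_2\in L^1(0,L)$. Hence $f$ has an absolutely continuous representative, so that for a.e.\ $y_1,z_1\in(0,L)$
\[
f(y_1)=f(z_1)+2\int_{z_1}^{y_1}\!\!\int_0^1\vv\cdot\partial_{y_1}\vv\,dy_2\,ds
\le f(z_1)+2\int_D|\vv|\,|\nabla\vv|\,dy
\le f(z_1)+2\,\|\vv\|_{L^2(D)}\|\nabla\vv\|_{L^2(D)},
\]
using the Cauchy–Schwarz inequality in the last step. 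Choosing $z_1$ to be a point at which $f$ does not exceed its average, $f(z_1)\le\frac1L\int_0^Lf\,ds=\frac1L\|\vv\|_{L^2(D)}^2$, and taking the essential supremum over $y_1$ yields
\[
\operatorname{ess\,sup}_{0\le y_1\le L}\int_0^1|\vv|^2\,dy_2
\le \frac1L\|\vv\|_{L^2(D)}^2+2\,\|\vv\|_{L^2(D)}\|\nabla\vv\|_{L^2(D)}.
\]

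Next I would absorb the quadratic term $\|\vv\|_{L^2(D)}^2$ by a Poincaré inequality. Since $\vv\in\VV_{\hspace{-1mm}div}$ satisfies $v_1=0$ on $S_w$ (i.e.\ at $y_2=1$) and $v_2=0$ on $S_c$ (i.e.\ at $y_2=0$), representing each component as the integral of its $y_2$–derivative starting from the portion of the boundary where it vanishes, together with Cauchy–Schwarz in $y_2$, gives $\|v_1\|_{L^2(D)}\le\|\partial_{y_2}v_1\|_{L^2(D)}$ and $\|v_2\|_{L^2(D)}\le\|\partial_{y_2}v_2\|_{L^2(D)}$, whence $\|\vv\|_{L^2(D)}\le C_P\|\nabla\vv\|_{L^2(D)}$. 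Therefore $\frac1L\|\vv\|_{L^2(D)}^2\le\frac{C_P}{L}\|\vv\|_{L^2(D)}\|\nabla\vv\|_{L^2(D)}$, and combining this with the previous bound proves the lemma with $c=\tfrac{C_P}{L}+2$.

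The routine computations here are minimal; the only points demanding care are the measure–theoretic justification that $f\in W^{1,1}(0,L)$ with the stated derivative and absolutely continuous representative (alternatively one may first prove the inequality for $\vv\in C^1(\bar D)$ satisfying the boundary conditions and then pass to the limit by density in $\VV_{\hspace{-1mm}div}$), and the verification that the boundary conditions encoded in $\VV_{\hspace{-1mm}div}$ indeed suffice for the Poincaré step, which they do componentwise in the $y_2$–direction. I expect this last piece of bookkeeping to be the only mild obstacle.
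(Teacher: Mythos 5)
Your proof is correct, but it handles the lower-order term differently from the paper. Both arguments begin the same way: set $f(y_1)=\int_0^1|\vv|^2\,dy_2$ and use the fundamental theorem of calculus together with Cauchy--Schwarz to obtain $f(y_1)\le f(z_1)+2\|\vv\|_{L^2(D)}\|\nabla\vv\|_{L^2(D)}$. The divergence is in how the base value $f(z_1)$ is controlled. The paper takes $z_1=0$, so $f(0)=\int_0^1|\vv(0,y_2)|^2dy_2$ is a boundary trace on $S_{in}$, and then invokes the interpolation-type trace estimate $\|\vv\|_{L^2(\partial D)}\le c\,\|\vv\|_{L^2(D)}^{1/2}\|\nabla\vv\|_{L^2(D)}^{1/2}$ cited from \cite[Lemma~3.2]{HLN14}, which delivers the stated product directly. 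You instead pick $z_1$ at (or below) the mean of $f$, giving $f(z_1)\le\frac1L\|\vv\|_{L^2(D)}^2$, and then absorb this quadratic term via the elementary componentwise Poincar\'e inequality $\|\vv\|_{L^2(D)}\le\|\nabla\vv\|_{L^2(D)}$, justified by $v_1|_{S_w}=0$ and $v_2|_{S_c}=0$. The end result is the same inequality with an explicit constant $c=\frac{1}{L}+2$. What your route buys is self-containedness and transparency: it avoids importing a trace estimate from another paper and makes explicit exactly which boundary conditions in $\VV_{\hspace{-1mm}div}$ are being used (the Dirichlet conditions in the $y_2$-direction on $S_w$ and $S_c$). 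What the paper's route buys is brevity, since the trace lemma is already available in their toolbox; note that that trace estimate with $\|\nabla\vv\|_{L^2}$ rather than the full $H^1$-norm itself implicitly relies on the same kind of boundary information, so the two arguments are not fundamentally different in the inputs they require, only in how those inputs are packaged.
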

\begin{proof}
  We can write
  \begin{eqnarray*}
    \int_0^1|\vv(y_1,y_2,t)|^2dy_2&=& \int_0^1|\vv(0,y_2,t)|^2dy_2+\int_0^{y_1}\frac{\partial}{\partial \theta} \left(\int_0^1|\vv(\theta,y_2,t)|^2dy_2\right) d\theta\\
    &=& \int_0^1\hspace{-2mm}|\vv(0,y_2,t)|^2dy_2+2\int_0^{y_1}\hspace{-2mm}\int_0^1\hspace{-2mm}|\vv(\theta,y_2,t)|\left|\frac{\partial \vv}{\partial y_1}  (\theta,y_2,t) \right|dy_2\, d\theta\\
    &\leq& \|\vv\|^2_{L^2(S_w)}+2\int_D|\vv||\nabla \vv|dy.
    \end{eqnarray*}
Applying the H\"older inequality  and using a trace estimate in the first term  on the right-hand side of the last inequality, i.e. $\|\vv\|_{L^2(\partial D)}\leq c\|\vv\|_{L^{2}(D)}^{\frac1{2}}\|\nabla \vv\|_{L^{2}(D)}^{\frac1{2}}$, cf., e.g., \cite[Lemma 3.2]{HLN14},  we get from above
  $$ \int_0^1|\vv(y_1,y_2,t)|^2dy_2 \leq c \|\vv\|_{L^2(D)} \|\nabla \vv\|_{L^2(D)}, $$ which is valid for each $y_1\in[0,L].$ Taking the essential supremum of  $\int_0^1|\vv|^2 dy_2$ over all $0\leq y_1\leq L$ yields the statement of this lemma.
\end{proof}
From now on we use following abbreviations for norms of functions defined on  $(0,L)$: $\|\cdot\|_\infty:= \|\cdot\|_{L^\infty(0,L)}$, $ \|\cdot\|_{1, \infty}:=\|\cdot\|_{W^{1,\infty}(0,L)}$. Moreover, $C_{K, \alpha}:=C(K, \alpha, \alpha^{-1})$ denotes a constant depending on the given constants $K, \alpha$ from (\ref{assumptions}).

\subsubsection*{Time derivative  error term}

\medskip

Let us first estimate the error term  $\int_0^T \int_D \EE_1\cdot \pp^2dy dt$ on the right-hand side of  (\ref{weak_form_diff2}).  We first rewrite the components of $\EE_1$ given by (\ref{matrix_E1})  in terms of the differences  of $\barh,\, \barh_t,\, \barh_{t\, y_1}$, see also  (\ref{barh_E}),
\begin{eqnarray*}
\frac{h^1_t}{h^1}-\frac{h^2_t}{h^2}&=&\frac{\barh_t}{h^1}-\frac{h^2_t}{h^1 h^2}\barh,\\
h^2\partial_t E&=&-\barh\left(h^1_{t y_1}+\frac{h^2_t}{h^2}h^1_{y_1}\right)+\barh_{y_1}\left(h^1_{t}+\frac{h^2_t}{h^2}h^1\right)-\barh_t h^1_{y_1}+ \barh_{t y_1 }h^1.
\end{eqnarray*}
We first concentrate on those terms from $\EE_1$, which  do not involve $\barh_{ty_1}$ (see the last term of $h^2\partial_t E$).
We rewrite $\EE_1=\tilde{\EE}_1-y_2 u_1^1 \barh_{t y_1 }h^1$ and estimate the term $\int_0^T \int_D \tilde{\EE}_1\cdot \pp^2dy dt$  first.
We have 
\begin{eqnarray*}
  &&|\tilde{\EE}_1|\leq C_{K,\alpha}\Big(\underbrace{|\uu^1+\nabla \uu^1||\bar{h}_t|}_{\tilde{\EE}_1^a} 
  +\underbrace{|\uu^1+\nabla \uu^1|\big[|\bar{h}|+ |\bar{h}_{y_1}|\big]\big[|h^1_t| + |h^1_{ty_1}|+|h^2_t| \big]}_{\tilde{\EE}_1^b}\Big).
\end{eqnarray*}
The  term containing $ \tilde{\EE}^a_1$ is now estimated by Lemma~\ref{lemma:essup} applied to  $|\pp^2|$ as follows:
\begin{eqnarray*}
&&\int_0^T\int_D \tilde{\EE}^a_1\cdot \pp^2 dy dt\leq C_{K, \alpha}\int_0^T\int_0^L|\barh_t|\int_0^1|\uu^1+\nabla \uu^1||\pp^2|dy_2 dy_1 dt
  \\&& \leq C_{K, \alpha} \int_0^T \left(ess\hspace{-2mm} \sup_{0\leq y_1\leq L}\int_0^1|\pp^2|^2\right)^\frac1{2}\int_0^L|\barh_t|\left( \int_0^1|\uu^1+\nabla\uu^1|^2dy_2\right)^\frac1{2}dy_1 dt
  \\&& \leq C_{K, \alpha} \int_0^T  \|\pp^2\|_{L^{2}(D)}^\frac1{2}\|\nabla \pp^2\|_{L^{2}(D)}^\frac1{2}\|\barh_t\|_{L^2(0,L)}\|\uu^1\|_{W^{1,2}(D)}dt.
\end{eqnarray*}
Employing $ \|\pp^2\|_{L^{2}(D)}^\frac1{2}\|\nabla \pp^2\|_{L^{2}(D)}^\frac1{2}\leq c  \|\pp^2\|_{W^{1,2}(D)}$ and using the H\"older and  Young inequalities we get
\begin{eqnarray*}
  &&\int_0^T\int_D \tilde{\EE}^a_1\cdot \pp^2 dy dt
  \\ && \leq\varepsilon \int_0^T\|\pp^2\|^2_{W^{1,2}(D)}dt
  +{C}_\varepsilon C_{K,\alpha}^{2}\|\barh_t\|^2_{L^\infty(0,T;L^{2}(0,L))}\int_0^T\|\uu^1\|^{2}_{W^{1,2}(D)}dt.
\end {eqnarray*}
  Using the H\"older and the Young inequalities it follows for the second term, 
\begin{eqnarray*}
&&\int_0^T\int_D \tilde{\EE}^b_1\cdot \pp^2 dy dt
\\&& \leq C_{K, \alpha}\int_0^T\|\uu^1\|_{W^{1,2}(D)}\|\pp^2\|_{L^2(D)} \|\barh\|_{1,\infty}\big(\|h^1_t\|_{1, \infty} +\|h^2_t\|_\infty \big) dt
\\\nonumber&& 
\leq \frac{C_{K,\alpha}^2}{2}\|\barh\|_{L^\infty(0,T;W^{1,\infty}(0,L))}^2 \omega_h(T)
+ \frac{1}{2}\int_0^T\|\uu^1\|_{W^{1,2}(D)}^2\|\pp^2\|_{L^2(D)}^2dt,
\end{eqnarray*}
where $\int_0^T\|h^1_t\|_{1,\infty}^2+\|h^2_t\|_\infty^2dt=:\omega_h(T)$.\footnote{Let us point out that $\omega_h(T)\downarrow 0$ as $T\downarrow 0.$ This will be essential in Section \ref{sec:fixed_point} in order to show the contractivity.}
 Hence we can summarize that
\begin{eqnarray}\label{timeder_est2}
  &&\int_0^T\int_D \tilde{\EE}_1\cdot \pp^2 dy dt
  \\&&\leq\varepsilon \int_0^T\|\pp^2\|^2_{W^{1,2}(D)}dt+\frac{1}{2}\int_0^T\|\uu^1\|_{W^{1,2}(D)}^2\|\pp^2\|_{L^2(D)}^2dt\nonumber
  \\\nonumber&& +{C_{K,\alpha}^2}\Big({C}_\varepsilon\|\barh\|^2_{W^{1,\infty}(0,T;L^{2}(0,L))}\int_0^T\|\uu^1\|_{W^{1,2}(D)}^2dt+\|\barh\|^2_{L^\infty(0,T;W^{1,\infty}(0,L))}\omega_h(T)\Big).
\end{eqnarray}

It remains to show the estimate of the  term $-\int_0^T \int_D y_2 u_1^1 \barh_{t y_1 }h^1\psi_2^2dy dt$.
By integration by parts with respect to $y_1$ and due to the  zero boundary conditions for $\psi_2^2$ on $S_{in }\cup S_{out}$ we can rewrite this term  as
\begin{eqnarray*}
-\int_0^T \hspace{-2mm}\int_D y_2 u_1^1 \barh_{t y_1 }h^1\psi_2^2dy dt=\int_0^T \hspace{-2mm} \int_D y_2 \barh_t(h^1_{y_1}\psi_2^2u_1^1+h^1\partial_{y_1}\psi_2^2 u_1^1+h^1\psi_2^2 \partial_{y_1}u_1^1)dy dt.
\end{eqnarray*}%
Thus, by (\ref{assumptions}) we get,
\begin{eqnarray*}
  -\int_0^T \hspace{-2mm}\int_D y_2 u_1^1 \barh_{t y_1 }h^1\psi_2^2dy dt \leq C_{K, \alpha}\int_0^T \hspace{-2mm} \int_D|\barh_t||\nabla \pp^2||\uu^1|+|\barh_t|| \pp^2||\uu^1+\nabla \uu^1|dy dt.
  \end{eqnarray*}
Analogously as in term $\tilde{\EE}_1^a$ above, we apply now Lemma~\ref{lemma:essup} to $|\uu^1|$ in the first term and  to $|\pp^2|$ in the second term on the right-hand side. Consequently applying the H\"older and Young inequalities we get 
  \begin{eqnarray}\label{timeder_est_remaining}
   -\int_0^T\hspace{-2mm} \int_D y_2 u_1^1 \barh_{t y_1 }h^1\psi_2^2dy dt \leq 2C_{K, \alpha}\int_0^T\|\barh_t\|_{L^2(D)}\|\uu^1\|_{W^{1,2}(D)}\|\pp^2\|_{W^{1,2}(D)}dt\quad
    \\\leq \varepsilon \int_0^T\|\pp^2\|_{W^{1,2}(D)}^2dt+C_\varepsilon C^2_{K, \alpha}\|\barh_t\|_{L^\infty(0,T;L^2(0,L))}\int_0^T\|\uu^1\|_{W^{1,2}(D)}^2dt.\nonumber
  \end{eqnarray}%
  Taking into account  estimates (\ref{timeder_est2}), (\ref{timeder_est_remaining}),  we can conclude that 
\begin{eqnarray}\label{timeder_est3}
  &&\int_0^T\int_D {\EE}_1\cdot \pp^2 dy dt
  \\&&\leq 2\varepsilon \int_0^T\hspace{-2mm}\|\pp^2\|^2_{W^{1,2}(D)}dt+\frac{1}{2}\int_0^T\|\uu^1\|_{W^{1,2}(D)}^2\|\pp^2\|_{L^2(D)}^2dt\nonumber
  \\\nonumber&& +{C_{K,\alpha}^2}\Big(2{C}_\varepsilon\|\barh\|^2_{W^{1,\infty}(0,T;L^{2}(0,L))}\int_0^T\hspace{-2mm}\|\uu^1\|_{W^{1,2}(D)}^2dt+\|\barh\|^2_{L^\infty(0,T;W^{1,\infty}(0,L))}\omega_h(T)\Big), 
\end{eqnarray}
where  $\omega_h(T):=  \int_0^T\|h^1_t\|_{1,\infty}^2+\|h^2_t\|_\infty^2dt$ and $ C_\varepsilon=C(\varepsilon^{-1})$.

\subsubsection*{Viscous terms}
Let us start with the estimate of term containing $I_1$, cf.  (\ref{viscous_diff}). We have
\begin{eqnarray*}
\frac{\mu}{\rho}\int_0^T\int_D h^2I_1 dy dt \leq  \frac{\mu\alpha^{-1}}{\rho}\int_0^T\|\EE(\uu^1)\|_{L^2(D)}\|F_{h^2}\|_\infty\|\nabla \pp^2\|_{L^2(D)} dt.
\end{eqnarray*}
Note that by (\ref{assumptions}) one has $\|F_{h^i}\|_\infty \leq C_{K, \alpha}, \, i=1,2$.
 \\Next, we  estimate  $\EE(\uu^1)=\EE_2(\uu^1)(F_{h^1}+\EE_3)+\nabla \uu^1 \EE_3$, cf. (\ref{matrix_E}). The term $\EE_3$ given by  (\ref{matrix_E3}) can be bounded in view of (\ref{barh_E})  by 
$\|\EE_3\|_\infty \leq C_{K,\alpha}\|\barh\|_{1,\infty}$ .
Hence,
\begin{equation}\label{matrix_E_est}
\|\EE(\uu^1)\|_{L^2(D)}\leq C_{K,\alpha}\big(\|\EE_2(\uu^1)\|_{L^2(D)}(\|\barh\|_{1,\infty}+1)+\|\nabla \uu^1\|_{L^2(D)}\|\barh\|_{1,\infty}\big).
\end{equation}
In the next step  we concentrate on the estimate of $\|\EE_2(\uu^1)\|_{L^2(D)}$. Using (\ref{barh_E}) one can easily verify that $\partial_{y_1}E=\frac{1}{h^2}\Big(h^1 \barh_{y_1 y_1 }- h^1_{y_1 y_1 } \barh-\frac{h^2_{y_1}}{h^2}(h^1\barh_{y_1}-h^1_{y_1}\barh)\Big)$. Therefore, for the matrix $\EE_2(\uu^1)$  in (\ref{matrix_E2}) it holds
\begin{eqnarray}\label{matrix_E2_est}
|\EE_2(\uu^1)|&\leq& C_{K,\alpha}\left[(|\barh|+|\barh_{y_1}|)(|\uu^1|+|\nabla \uu^1|)\right.\\&& \qquad \left.+(|\barh||h^1_{y_1 y_1}|+|\barh_{y_1 y_1}||h^1|)|\uu^1|\right].\nonumber
\end{eqnarray}
Let us point out that the second derivatives $\partial^2_{y_1}\barh,\, \partial^2_{y_1}h^1$  only belong  to $L^2(0,L)$.  Therefore we use  Lemma \ref{lemma:essup} to  estimate the corresponding terms in the expression for $\|\EE_2(\uu^1)\|_{L^2(D)}^2$ in the follwing way
\begin{eqnarray*}
\int_D |\barh|^2|h^1_{y_1 y_1}|^2|\uu^1|^2& \leq& \|\barh\|^2_{\infty}\int_0^L |h^1_{y_1 y_1}|^2 dy_1\mbox{ ess\hspace{-3mm}}\sup_{0\leq y_1\leq L\ } \int_0^1|\uu^1|^2 dy_2,
\\&\leq& c_1\|\barh\|^2_{\infty}\|h^1\|_{H^2(0,L)}^2\|\nabla \uu^1\|^2_{L^{2}(D)}.
\end{eqnarray*}
Analogously,
$$\int_D |\barh_{y_1 y_1}|^2 |h^1|^2|\uu^1|^2dy\leq c_2\alpha^{-1}\|\barh\|_{H^2(0,L)}^2\|\nabla \uu^1\|^2_{L^{2}(D)}.$$
Consequently, taking into account  (\ref{matrix_E2_est}) and  estimates above we obtain
\begin{eqnarray}\label{matrix_E2_est2}
\|\EE_2(\uu^1)\|_{L^2(D)}\leq C_{K, \alpha} \big[\|\barh\|_{1,\infty}+\|\barh\|_{\infty}\|h^1\|_{H^2(0,L)}+ \|\barh\|_{H^2(0,L)}\big]\|\nabla \uu^1\|_{L^{2}(D)}.
\end{eqnarray}
Finally, using  (\ref{matrix_E_est}),(\ref{matrix_E2_est2}), Young's inequality and the fact that $\|\barh\|_{1,\infty}\leq K$, see (\ref{assumptions}), we obtain 
\begin{eqnarray}\label{viscous_diff_I1}
\frac{\mu}{\rho}\int_0^T\hspace{-2mm}\int_D h^2I_1 dy dt 
&\leq& \frac{C_{K,\alpha}\mu}{\rho}
\int_{0}^T \hspace{-2mm}\big[\|\barh\|_{1,\infty}(1+\|h^1\|_{H^2(0,L)})
\\\nonumber&&\qquad \qquad + 
\|\barh\|_{H^2(0,L)}\big]
\|\uu^1\|_{W^{1,2}(D)}
\|\nabla\pp^2\|_{L^2(D)}dt 
\\\nonumber &\leq& \varepsilon\int_0^T\|\nabla\pp^2\|_{L^2(D)}^2dt+C_\varepsilon C_{K,\alpha}^2\|\uu^1 \|_{L^2(0,T;W^{1,2}(D))}^2
\\&&  \qquad \qquad.\Big(\|\barh \|^2_{L^\infty(0,T;W^{1,\infty}(0,L))}C_{h}^2+\|\barh \|^2_{L^\infty(0,T;H^{2}(0,L))}\Big),
\nonumber
\end{eqnarray}
where
 $C_\varepsilon =C\big(\varepsilon^{-1},\frac{\mu}{\rho}\big)$ and
\begin{equation}\label{C_h}
C_h:=  1+ \|h^1 \|_{L^\infty(0,T;H^{2}(0,L))}+\|h^2 \|_{L^\infty(0,T;H^{2}(0,L))}\
\end{equation}
is bounded.

\medskip

Now we proceed with the estimate of term containing $I_2$ in  (\ref{viscous_diff}),
\begin{eqnarray*}
\frac{\mu}{\rho}\int_0^T\int_D h^2I_2 dy dt \leq  \frac{\mu\alpha^{-1}}{\rho}\int_0^T\|F_{h^1}\|_\infty\|\nabla \uu^1\|_{L^2(D)}\|\EE(\pp^1)\|_{L^2(D)} dt.
\end{eqnarray*} 
We recall that $\EE(\pp^1)=\EE_2(\pp^1)(F_{h^1}+\EE_3)+\nabla \pp^1 \EE_3$. Analogously as in (\ref{matrix_E_est}) we can bound
\begin{equation}\label{matrix_E_est_pp}
\|\EE(\pp^1)\|_{L^2(D)}\leq C_{K,\alpha}\big(\|\EE_2(\pp^1)\|_{L^2(D)}(\|\barh\|_{1,\infty}+1)+\|\nabla \pp^1\|_{L^2(D)}\|\barh\|_{1,\infty}\big).
\end{equation}
Repeating similar estimates as in (\ref{matrix_E2_est}), (\ref{matrix_E2_est2}) we obtain
\begin{eqnarray}\label{matrix_E2_est2_pp}
\|\EE_2(\pp^1)\|_{L^2(D)}\leq C_{K, \alpha} \big[\|\barh\|_{1,\infty}(1+\|h^1\|_{H^2(0,L)})+ \|\barh\|_{H^2(0,L)}\big]\|\nabla \pp^1\|_{L^{2}(D)}.
\end{eqnarray}
Consequently, using (\ref{matrix_E_est_pp}),  (\ref{matrix_E2_est2_pp}) we arrive at  
\begin{eqnarray}\label{viscous_diff_I2}
\frac{\mu}{\rho}\int_0^T\hspace{-2mm}\int_D h^2I_2 dy dt 
&\leq& \frac{C_{K,\alpha}\mu}{\rho}
\int_{0}^T \hspace{-2mm}\big[\|\barh\|_{1,\infty}(1+\|h^1\|_{H^2(0,L)})
\\\nonumber&&\qquad \qquad + 
\|\barh\|_{H^2(0,L)}\big]
\|\uu^1\|_{W^{1,2}(D)}
\|\nabla\pp^1\|_{L^2(D)}dt. 
\end{eqnarray}
In order to replace the norm of $\nabla \pp^1$ by the norm of  $\nabla \pp^2$ we now use the relation  $\pp^1=\RR^{-1}\pp^2$ cf. (\ref{pps}). 
One can verify that 
\begin{eqnarray*}
|\nabla (\RR^{-1}\pp^2) |&\leq& C_{K, \alpha}|\nabla \pp^2|+\big|\partial_{y_1}(\hfraci)(1-y_2 h^1_{y_1}-h^1)-y_2h^1\partial_{y_1}^2(\hfraci)\big||\psi^2_1|
\\& \leq& C_{K, \alpha}\Big(|\nabla \pp^2|+| \pp^2|(1+|\partial_{y_1}^2 h^1|+|\partial_{y_1}^2h^2|)\Big),\quad \mbox{thus}
\end{eqnarray*} 
\begin{equation}\label{norm_pp1}
\|\nabla \pp^1 \|_{L^2(D)}\leq C_{K, \alpha}\Big(\| \pp^2 \|_{W^{1,2}(D)}
(1+ \|h^1\|_{H^2(0,L)}+ \|h^2\|_{H^2(0,L)})\Big).
\end{equation} 
Inserting (\ref{norm_pp1})  into (\ref{viscous_diff_I2}) we finally  obtain an analogous estimate to (\ref{viscous_diff_I1}), i.e.,
\begin{eqnarray}\label{viscous_diff_I2final}
\frac{\mu}{\rho}\int_0^T\hspace{-2mm}\int_D h^2I_2 dy dt 
&\leq &\varepsilon \int_0^T \|\pp^2\|_{W^{1,2}(D)}^2dt+\bar{C}_\varepsilon C_{K,\alpha}^2C_h^2\|\uu^1 \|_{L^2(0,T;W^{1,2}(D))}^2\qquad
\\&&\qquad \qquad . \Big(\|\barh \|^2_{L^\infty(0,T;W^{1,\infty}(0,L))}C_{h}^2+\|\barh \|^2_{L^\infty(0,T;H^{2}(0,L))}\Big)\nonumber
\end{eqnarray}
with $\bar{C}_\varepsilon=\bar{C}\big(\varepsilon^{-1},\frac{\mu}{\rho}\big)$. 

\medskip
{\rred
It remains to show the estimate of  the term containing $I_3$, cf.  (\ref{viscous_diff}). Since
\begin{eqnarray*}%
  \EE_\RR=\begin{bmatrix}
  \frac{\barh}{h^2}  & 0\\-y_2E &0 \end{bmatrix}
  \ \mbox{and}\quad \RR^T\EE_{\RR} +\EE_{\RR}^T= \begin{bmatrix}
 \frac{\barh}{h^2}\frac{h^2+h^1}{h^2}+y_2^2E^2  & -y_2E\\-y_2E &0 \end{bmatrix},
\end{eqnarray*}%
with (\ref{barh_E}) we have $\|\RR^T\EE_{\RR} +\EE_{\RR}^T\|_\infty\leq C_{K,\alpha}\|\barh\|_{1,\infty}$. Thus 
\begin{eqnarray*}
\frac{\mu}{\rho}\int_0^T\int_D h^2I_3 dy dt \leq  \frac{\mu}{\rho}\int_0^TC_{K,\alpha}\|\barh\|_{1,\infty} \|F_{h^1}\|_\infty^2\|\nabla \uu^1\|_{L^2(D)}\|\nabla \pp^1\|_{L^2(D)} dt.
\end{eqnarray*}
Using (\ref{norm_pp1}), (\ref{C_h}), and by applying  Young's inequality, we get 
\begin{eqnarray}\label{viscous_diff_I3}
&&\frac{\mu}{\rho}\int_0^T\int_D h^2 I_3 dy dt \leq \frac{\mu}{\rho}\int_0^TC_{K,\alpha}^4C_h \|\barh\|_{1,\infty} \|\nabla \uu^1\|_{L^2(D)}\| \pp^2\|_{W^{1,2}(D)} 
dt\ 
\\&&\leq \varepsilon \int_0^T\| \pp^2\|_{W^{1,2}(D)}^2dt+\tilde{C}_\varepsilon C_{K,\alpha}^8C_h^2\|\uu^1 \|_{L^2(0,T;W^{1,2}(D))}^2\|\barh \|^2_{L^\infty(0,T;W^{1,\infty} (0,L))}\nonumber,
\end{eqnarray}
where
$\tilde{C}_\varepsilon=\bar{C}\big(\varepsilon^{-1},\frac{\mu}{\rho}\big)$.
}
Summarizing the above estimates of viscous error terms
 (\ref{viscous_diff_I1}), (\ref{viscous_diff_I2final}), (\ref{viscous_diff_I3}) we finally get 
\begin{eqnarray}\label{viscous_diff_Is}
&&\frac{\mu}{\rho}\int_0^T\int_D h^2I_1+h^2I_2-h^2I_3 dy dt
\\&&\leq3\varepsilon  \int_0^T\|\pp^2\|_{W^{1,2}(D)}dt +C_{\varepsilon}(C_{K,\alpha}^2+C_{K,\alpha}^8)C_h^2\nonumber
\\&&\qquad \qquad 
.\Big(\|\barh \|^2_{L^\infty(0,T;W^{1,\infty}(0,L))}+\|\barh \|^2_{L^\infty(0,T;H^{2}(0,L))}\Big)\|\uu^1 \|_{L^2(0,T;W^{1,2}(D))}^2,\nonumber
\end{eqnarray}
where 
$C_{\varepsilon}=C\big( \varepsilon^{-1}, \frac{\mu}{\rho})$ and $C_h$ is given by (\ref{C_h}).

\subsubsection*{Convective terms}
{\rred
We follow with the estimate  terms coming from the difference of convective terms, 
see  (\ref{convective_diff}).

{\bf I)}  Recalling (\ref{becko-rozdiel}),  (\ref{assumptions}) and  using $\|\RR\|_{\infty}\leq C_{K,\alpha}$, cf. (\ref{matrix_R}), for the first term on the right-hand side of (\ref{convective_diff}) it holds
\begin{eqnarray*}
\int_0^T| b_{h^2}(\pp^2,\RR\uu^1,\pp^2)| dt\leq \frac{1}{2}\int_0^T| B_{h^2}(\pp^2,\RR\uu^1,\pp^2)|+| B_{h^2}(\pp^2,\pp^2,\RR\uu^1)|dt\\
\leq C_{K,\alpha}\int_0^T\int_0^D |\pp^2|^2|\nabla \uu^1| +|\pp^2||\nabla \pp^2||\uu^1|dy\, dt.
\nonumber
\end{eqnarray*}
Further, applying the H\"older inequality, the Sobolev interpolation inequality in two dimensions: $\|\varphi\|_{L^4(D)}\leq c\|\nabla \varphi\|_{L^2(D)}^{1 / 2}\|\varphi\|_{L^2(D)}^{1/2}$,
and  Young's inequality
\begin{equation}\label{young_e}
  ab \leq\varepsilon a^p+C_\varepsilon b^{\frac{p}{p-1}},\quad  C_\varepsilon=C(\varepsilon^{-1}), \  1<p<\infty
  \end{equation}
 for  $p=2,4$ we finally get
\begin{eqnarray}\label{b1_estim}
\int_0^T| b_{h^2}(\pp^2,\RR\uu^1,\pp^2)| dt&\leq& 
 C_{K,\alpha}\int_0^T\|\pp^2\|_{L^2}\|\nabla \pp^2\|_{L^2}\|\nabla \uu^1\|_{L^2}
\\&&+\|\pp^2\|^{1/2}_{L^2}\|\nabla \pp^2\|^{3/2}_{L^2}\| \uu^1\|^{1/2}_{L^2}\| \nabla \uu^1\|^{1/2}_{L^2}dt\nonumber\\
&& \hspace{-2cm}\leq 2\varepsilon \int_0^T\|\nabla \pp\|_{L^{2}}^2dt+C^{\RM{1}}_\varepsilon C_{K,\alpha}^2
\int_0^T \|\nabla \uu^1\|_{L^2}^2\|\pp^2\|_{L^2}^2 dt.\nonumber
\end{eqnarray}
Here we have used the abbreviation $\|\cdot\|_{L^2}:=\|\cdot\|_{L^2(D)}$. 
Note that ${C^{\RM{1}}_\varepsilon}$ is a constant depending on 
${\varepsilon}^{-1}$ and on the norm $\|\uu^1\|_{L^\infty(0,T;L^2(D))}^2$.

\medskip

{\bf II)} With the assistance of (\ref{becko-rozdiel}), (\ref{assumptions}) and  recalling $\|\RR\|_{\infty}\leq C_{K,\alpha}$ 
  we can write for the second term on the right-hand side of (\ref{convective_diff})
\begin{eqnarray}\label{b2_estim0}
\int_0^T\hspace{-2mm}\left|\hfracdi b_{h^2}(\RR\uu^1,\RR\uu^1,\pp^2)\right|dt \leq C_{K,\alpha}\hspace{-2mm}\int_0^T \hspace{-2mm}\|\barh \|_{\infty}\hspace{-2mm}\int_D\hspace{-1mm}|\uu^1||\nabla \uu^1||\pp^2|+|\uu^1|^2|\nabla \pp^2|dy dt.\ 
\end{eqnarray}
Now, using  H\"older and  Sobolev inequalities as above we obtain
\begin{eqnarray}\label{b2_estim}
 \hspace{-7mm}&& \int_0^T\left|\hfracdi b_{h^2}(\RR\uu^1,\RR\uu^1,\pp^2)\right|dt \leq 
  \\\hspace{-7mm}&&  C_{K,\alpha}\hspace{-1mm}\int_0^T \hspace{-3mm}\|\barh \|_{\infty} \hspace{-0mm}\Big (
\|\nabla \uu^1\|_{L^2}^{\frac{3}{2}}\| \uu^1\|_{L^2}^{\frac1{2}}\| \pp^2\|_{L^2}^{\frac1{2}}\| \nabla\pp^2\|_{L^2}^{\frac1{2}}
 + \|\uu^1\|_{L^2}\|\nabla \uu^1\|_{L^2}\| \nabla\pp^2\|_{L^2} \Big)dt.\nonumber
\end{eqnarray}
With additional use of  Young's inequality
 $ ab \leq\frac1{p} a^p+{\frac{p-1}{p}}\, b^{\frac{p}{p-1}}, \  1<p<\infty$
with $p=4$  the first term on the right-hand side of (\ref{b2_estim}) can be  bounded from above by
\begin{eqnarray*}
&&\frac{1}{4}\int_0^T \| \pp^2\|_{L^2}^{2} \|\nabla \uu^1\|_{L^2}^{2}dt
\\&+&\frac{3}{4}C^{4/3}_{K,\alpha}\|\barh\|^{4/3}_{L^\infty((0,T)\times(0,L))}\|\uu^1\|^{2/3}_{L^\infty(0,T;L^{2}(D))}\int_0^T \|\nabla \pp^2\|_{L^2}^{2/3} \|\nabla \uu^1\|_{L^2}^{4/3}dt.
\end{eqnarray*}
Consequently, by applying Young's inequality (\ref{young_e}) with $p=3$
we get for this  term 
\begin{eqnarray*}
&&\frac{1}{4}\int_0^T \| \pp^2\|_{L^2}^{2} \|\nabla \uu^1\|_{L^2}^{2}dt+\varepsilon \int_0^T \|\nabla \pp^2\|_{L^2}^{2}dt
\\&+&\frac{3}{4}C_\varepsilon C^{2}_{K,\alpha}\|\barh\|^{2}_{L^\infty((0,T)\times(0,L))}\|\uu^1\|_{L^\infty(0,T;(L^{2}(D))}\int_0^T \|\nabla \uu^1\|_{L^2}^{2}dt.
\end{eqnarray*}
The second right-hand side term in (\ref{b2_estim}) can be  bounded using Young's inequality (\ref{young_e}) with $p=2$   by
$$ \varepsilon \int_0^T \|\nabla \pp^2\|_{L^2}^{2}dt+C_\varepsilon C^{2}_{K,\alpha}\|\barh\|^{2}_{L^\infty((0,T\times(0,L))}\|\uu^1\|^2_{L^\infty(0,T;L^{2}(D))}\int_0^T \|\nabla \uu^1\|_{L^2}^{2}dt.$$
We finally  have
\begin{eqnarray}\label{b2_estim2}
&&\int_0^T\hspace{-2mm}\left|\hfracdi b_{h^2}(\RR\uu^1,\RR\uu^1,\pp^2)\right|dt  \leq \frac{1}{4}\int_0^T\hspace{-2mm} \| \pp^2\|_{L^2}^{2} \|\nabla \uu^1\|_{L^2}^{2}dt
\\&&\ \nonumber+2\varepsilon \int_0^T \hspace{-2mm}\|\nabla \pp^2\|_{L^2}^{2}dt
 +C^{\RM{2}}_\varepsilon C^{2}_{K,\alpha}\|\barh\|^{2}_{L^\infty((0,T)\times(0,L))}\int_0^T \|\nabla \uu^1\|_{L^2}^{2} dt,
\end{eqnarray}
where  ${C^{\RM{2}}_\varepsilon}$ is a constant depending on 
${\varepsilon}^{-1}$ and on the norm $\|\uu^1\|_{L^\infty(0,T;L^2(D))}$.

\medskip

{\bf III)} Now we estimate  the third term on the right-hand side of  (\ref{convective_diff}). Let us remark that with the assistance of  (\ref{barh_E}) one can easily show that
$|(\nabla \RR)_{i,j}|\leq C_{K,\alpha}(|\barh|(1+|h^1_{y_1 y_1}|)+|\barh_{y_1}|+|\barh_{y_1 y_1}|)$, compare  e.g. the lines above (\ref{matrix_E2_est}). Thus
\begin{eqnarray*} \label{b3_estim0}
  \int_0^T\hspace{-2mm} \left|\hfraci T_{h^2}(\RR\uu^1,\uu^1,\pp^2)\right|dt\leq
 C_{K,\alpha} \int_0^T \hspace{-2mm}\int_D |\uu^1|^2|\pp^2|(\|\barh\|_{1,\infty}(1+|h^1_{y_1 y_1}|)+|\barh_{y_1 y_1}|).
\end{eqnarray*}
We firstly apply the H\"older inequality  with respect to  the integral $\int_0^1 dy_2$ on the right-hand side,
\begin{eqnarray*}
  &&\hspace{-5mm}\int_0^T\hspace{-2mm} \left|\hfraci T_{h^2}(\RR\uu^1,\uu^1,\pp^2)\right|dt\leq
  \\&&\hspace{-5mm} C_{K,\alpha} \hspace{-1mm}\int_0^T\hspace{-2mm} \int_0^L\hspace{-2mm}\big(\|\barh\|_{1,\infty}(1+|h^1_{y_1 y_1}|)+|\barh_{y_1 y_1}|\big)
\|\uu^1\|_{L^2(0,1)}\|\uu^1\|_{L^4(0,1)}\|\pp^2\|_{L^4(0,1)} dy_1 dt,
\end{eqnarray*} 
next we  apply  Lemma \ref{lemma:essup} to $\|\uu^1\|_{L^2(0,1)}$ and the H\"older inequality with respect to the integral $\int_0^L dy_1$, 
\begin{eqnarray*}
  &&\hspace{-5mm}\int_0^T\hspace{-2mm} \left|\hfraci T_{h^2}(\RR\uu^1,\uu^1,\pp^2)\right|dt\leq\\
  &&\hspace{-5mm}  C_{K,\alpha} \int_0^T\hspace{-1mm}\big(\|\barh\|_{1,\infty}(1+\|h^1\|_{H^2(0,L)})+\|\barh\|_{H^2(0,L)}\big)\|\nabla \uu^1\|^{\frac1{2}}_{L^2} \| \uu^1\|^{\frac1{2}}_{L^2} 
\|\uu^1\|_{L^4}\|\pp^2\|_{L^4}, 
\end{eqnarray*} 
where $\|\cdot\|_{L^r}:=\|\cdot\|_{L^r(D)},\, r=2,4$.
Finally, using  the {\red Sobolev interpolations as in the first item I)},  Young's inequalities
with $p=4$ and (\ref{young_e}) with $p=3$, 
we obtain 
\begin{eqnarray}\label{b3_estimB}
  &&\hspace{-5mm}\int_0^T\hspace{-2mm} \left|\hfraci T_{h^2}(\RR\uu^1,\uu^1,\pp^2)\right|dt\\
  &&\nonumber\leq C_{K,\alpha}\| \uu^1\|_{L^\infty(0,T;L^2(D))}
\\\nonumber&&\quad \big(\|\barh\|_{L^\infty(0,T; W^{1,\infty}(0,L))} (1+\|h^1\|_{L^\infty(0,T; H^2(0,L))})+\|\barh\|_{L^\infty(0,T; H^2(0,L))}\big).
\\ \nonumber&&\quad \int_0^T\hspace{-1mm}
\|\nabla \uu^1\|_{L^2(D)}\|\pp^2\|^{1/2}_{L^2(D)}\|\nabla \pp^2\|^{1/2}_{L^2(D)}dt
\\\nonumber && \leq\frac{1}{4}\int_0^T\hspace{-2mm} \| \pp^2\|_{L^2(D)}^{2} \|\nabla \uu^1\|_{L^2(D)}^{2}dt
+\varepsilon \int_0^T \hspace{-2mm}\|\nabla \pp^2\|_{L^2(D)}^{2}dt+
\\\nonumber&& \quad {C}^{\RM{3}}_\varepsilon C^{2}_{K,\alpha}
\big(C_{h}^2\|\barh\|_{L^\infty(0,T; W^{1,\infty}(0,L))}^2+\|\barh\|^2_{L^\infty(0,T; H^2(0,L))}\big)
\int_0^T \hspace{-2mm}\|\nabla \uu^1\|_{L^2(D)} dt,
\end{eqnarray} 
where   ${{C}^{\RM{3}}_\varepsilon}$ is a constant depending on 
${\varepsilon}^{-1}$ and on the norm of $\uu^1$ in ${L^\infty(0,T;L^2(D))}$.  We  recall  $1+\|h^1\|^2_{L^\infty(0,T; H^2(0,L))} \leq C_h$ given by (\ref{C_h}).

Thus, since $C_h\geq 1$ 
we can summarize the estimate of the third term on the right-hand side of  (\ref{convective_diff}), 
\begin{eqnarray}\label{b3_estim2}
\int_0^T\hspace{-2mm}|\hfraci T_{h^2}(\RR\uu^1,\uu^1,\pp^2)|dt \leq \frac{1}{4}\int_0^T\hspace{-2mm} \| \pp^2\|_{L^2}^{2} \|\nabla \uu^1\|_{L^2}^{2}dt
+\varepsilon \int_0^T \hspace{-2mm}\|\nabla \pp^2\|_{L^2}^{2}dt \qquad 
\\\nonumber +C^{\RM{3}}_\varepsilon C^{2}_{K,\alpha}C_h^2
\big[\|\barh\|_{L^\infty(0,T; W^{1,\infty}(0,L))}^2+\|\barh\|^2_{L^\infty(0,T; H^2(0,L))}\big]\int_0^T\|\nabla \uu^1\|_{L^2(D)}dt.
\end{eqnarray}

 \medskip

 {\bf IV)} To the estimate the fourth term on the right-hand side of (\ref{convective_diff}) we proceed analogously as in item {\bf III)} to come to
\begin{eqnarray}\label{b4_estimB}
 && \int_0^T\hspace{-2mm} \left|\hfraci T_{h^2}(\RR\uu^1,\pp^1,\RR\uu^1)\right|dt
 \leq C_{K,\alpha}\| \uu^1\|_{L^\infty(0,T;L^2(D))}.
\\\nonumber&&\quad \big(\|\barh\|_{L^\infty(0,T; W^{1,\infty}(0,L))} (1+\|h^1\|_{L^\infty(0,T; H^2(0,L))})+\|\barh\|_{L^\infty(0,T; H^2(0,L))}\big).
\\ \nonumber&&\quad \int_0^T\hspace{-1mm}
\|\nabla \uu^1\|_{L^2(D)}\|\pp^1\|^{1/2}_{L^2(D)}\|\nabla \pp^1\|^{1/2}_{L^2(D)}dt,
\end{eqnarray}
cf. (\ref{b3_estimB}). Now replacing the norms of $\pp^1$ by norms of  $\pp^2$ 
cf. (\ref{pps}) and (\ref{norm_pp1}) we get constant $ C^2_{K,\alpha}$ on the right-hand side of (\ref{b4_estimB}), however we for the sake of simplicity we denote it again as $ C_{K,\alpha}$. Moreover additional factor  $ (1+\|h^1\|_{L^\infty(0,T; H^2(0,L))}+\|h^2\|_{L^\infty(0,T; H^2(0,L))})^{1/2}$ bounded by $C_h^{\frac1{2}}$, cf. (\ref{C_h}), appears. Thus using Young's inequalities as above we finally come to
\begin{eqnarray}\label{b4_estim2}
\int_0^T\hspace{-2mm}|\hfraci T_{h^2}(\RR\uu^1,\pp^1,\RR\uu^1)|dt \leq \frac{1}{4}\int_0^T\hspace{-2mm} \| \pp^2\|_{L^2}^{2} \|\nabla \uu^1\|_{L^2}^{2}dt
+\varepsilon \int_0^T \hspace{-2mm}\|\pp^2\|_{W^{1,2}}^{2}dt \qquad 
\\\nonumber +C^{\RM{4}}_\varepsilon C^{2}_{K,\alpha}C_h^3
\big[\|\barh\|_{L^\infty(0,T; W^{1,\infty}(0,L))}^2+\|\barh\|^2_{L^\infty(0,T; H^2(0,L))}\big]\int_0^T\|\nabla \uu^1\|_{L^2(D)}dt.
\end{eqnarray}
where   ${{C}^{\RM{4}}_\varepsilon}$ is a constant depending on 
${\varepsilon}^{-1}$ and on the norm of $\uu^1$ in ${L^\infty(0,T;L^2(D))}$,  $C_h\geq 1$ is given by (\ref{C_h}).

Summarizing the above estimates of convective error terms
(\ref{b1_estim}), (\ref{b2_estim2}), (\ref{b3_estim2}) and  (\ref{b4_estim2}) we finally get for the difference of convective terms  (\ref{convective_diff})

\begin{eqnarray}\label{convective_diff_estimate}
&&\int_0^T b_{h^2}(\pp^2,\RR\uu^1,\pp^2)+ \hfracdi b_{h^2}(\RR\uu^1,\RR\uu^1,\pp^2)
\\&& \hphantom{\int_0^T} -\frac1{2}\int_D \hfraci[ T_{h^2}(\RR\uu^1,\uu^1,\pp^2)-T_{h^2}(\RR\uu^1,\pp^1,\RR\uu^1)]dy dt.\nonumber
\\\nonumber&&\leq\left(\frac{3}{4}+C_\veps^{\RM{1}}C_{K,\alpha}^2\right)\int_0^T\hspace{-2mm} \| \pp^2\|_{L^2}^{2} \|\nabla \uu^1\|_{L^2}^{2}dt
+6\varepsilon \int_0^T \hspace{-2mm}\|\pp^2\|_{W^{1,2}}^{2}dt \qquad 
\\&&\nonumber +C^{2}_{K,\alpha}(C_\veps^{\RM{2}}+C_\veps^{\RM{3}}C_h^2 +C^{\RM{4}}_\varepsilon C_h^3)
\big[\|\barh\|_{L^\infty(0,T; W^{1,\infty}(0,L))}^2+\|\barh\|^2_{L^\infty(0,T; H^2(0,L))}\big].
\\&&\nonumber\qquad \int_0^T\|\nabla \uu^1\|_{L^2(D)} +\|\nabla \uu^1\|^2_{L^2(D)}dt.
\end{eqnarray}
}

\subsubsection*{Boundary terms}
It remains to estimate the boundary terms in (\ref{weak_form_diff2}), cf. (\ref{bnd1}), (\ref{bnd2}), (\ref{bnd3}). This can be done with the use of the following {trace inequality for $\vv \in \VV_{\hspace{-1mm}div}$,
  $$\|\vv\|_{L^r(\partial D)}\leq c\|\vv\|_{L^{2}(D)}^{\frac1{r}}\|\nabla \vv\|_{L^{2}(D)}^{1-\frac1{r}},\, 0<r<\infty,$$ cf. \cite[Lemma 3.2]{HLN14}}. 
As first we demonstrate the estimate of pressure  terms containing  $q_{in}$.
The pressure terms on the outflow $S_{out}$ and the bottom boundary $S_w$
can be bounded analogously. We denote  $q_{in}^1- q_{in}^2=:\bar{q}_{in}$. We have
\begin{eqnarray*}
  &&\int_0^T\int_0^1|q_{in}^1- q_{in}^2||h^1| \psi^2_1(0,y_2,t)|dy_2 dt
  \leq \int_0^T \alpha^{-1}\|\bar{q}_{in}\|_{L^2(S_{in})}\|\pp^2\|_{W^{1,2}(D)}dt
\\&& \leq \varepsilon \int_0^T\|\pp^2\|_{W^{1,2}(D)}^2dt + C_\varepsilon \alpha^{-2}\int_0^T\|\bar{q}_{in}\|^2_{L^2(S_{in})} dt.
\end{eqnarray*}
The remaining  boundary terms on $S_w$ containing $h^1-h^2$ and $h^1_t- h^2_t$  can be estimated  as follows. By the H\"older inequality we have
\begin{eqnarray*}
  \int_0^T\hspace{-2mm}\int_0^L\hspace{-1mm}\frac1{2} \left|\frac{h^2}{h^1}\right||h^1_t- h^2_t|| u^1_2 \psi^2_2|(y_1,1,t)+ \left|\frac{h^2-h^1}{h^1}\right|\left|q_w^1+\frac{h^2_t}{2}u^1_2\right||\psi^2_2(y_1,1,t)|dy_1 dt
  \\\leq \int_0^T\hspace{-2mm}\frac{\alpha^{-2}}{2}\|\barh_t\|_{L^2(0,L)}\|\uu^1\|_{L^4(S_w)}\|\pp^2\|_{L^4(S_w)} 
  +\alpha^{-1}\|q_w^1\|_{L^2(0,L)}\|\barh\|_{L^\infty(0,L)}\|\pp^2\|_{L^2(S_w)} 
   \\+ \frac{\alpha^{-1}}{2}\|\barh\|_{L^\infty(0,L)}\|h^2_t\|_{L^2(0,L)}\|\uu^1\|_{L^4(S_w)}\|\pp^2\|_{L^4(S_w)} dt
\end{eqnarray*}
Using the trace argument mentioned above,  $\|\vv\|_{L^r(S_w)}\leq c \|\nabla \vv\|_{L^{2}(D)}, \, r=2,4$ and 
applying the Young inequality (\ref{young_e}) with $p=2$ we obtain for the remaining boundary terms on $S_w$
\begin{eqnarray*}
   \leq 3\varepsilon \int_0^T\hspace{-2mm}\|\pp^2\|_{W^{1,2}(D)}^2dt
  +{C}_\varepsilon C_{K,\alpha}^{2} \hspace{-0mm}\|\barh_t\|_{L^\infty(0,T;L^2(0,L))}^2
  \int_0^T\hspace{-2mm}\|\nabla \uu^1\|_{L^{2}(D)}^2dt,
  \\+{C}_\varepsilon C_{K,\alpha}^{2}\|\barh\|_{L^\infty(0,T;L^\infty(0,L))}^2(1+c_h^2) \int_0^T\hspace{-2mm}\|\nabla \uu^1\|_{L^{2}(D)}^2+\|q_w^1\|^2_{L^2(0,L)}dt.
\end{eqnarray*}
where $c_{h}:=\|h^2_t\|_{L^\infty(0,T;L^2(0,L))}$. 

Thus, collecting all estimates of the boundary terms we obtain
\begin{eqnarray}\label{boundary_estim}
&&\int_0^T\int_0^1-(q_{in}^1- q_{in}^2)h^1 \psi^2_1(0,y_2,t)
+(q_{out}^1- q_{out}^2)h^1 \psi^2_1(L,y_2,t)dy_2  dt
\\&&-\int_0^T\int_0^L(q_{w}^1- q_{w}^2)\psi^2_2(y_1,1,t) + \frac{h^2}{2h^1} \partial_t (h^1- h^2)u^1_2\psi^2_2(y_1,1,t) \nonumber
\\&&\hphantom{\int_0^T\int_0^1} +\hfracdi \big(q_w^1+ \frac{h^2_t}{2}u^1_2\big)\psi_2^2(y_1,1,t)dy_1 dt\nonumber
\\&&\leq 6\varepsilon \int_0^T\hspace{-2mm}\|\pp^2\|_{W^{1,2}(D)}^2dt
+ \tilde{C}_\varepsilon C_{K,\alpha}^2\int_0^T\hspace{-2mm}\|\bar{q}_{in}\|^2_{L^2(S_{in})}\hspace{-1mm} +\|\bar{q}_{out}\|^2_{L^2(S_{out})}\hspace{-1mm}+ \|\bar{q}_{w}\|^2_{L^2(S_{w})} dt\nonumber
\\&& \ +\tilde{C}_\varepsilon C_{K,\alpha}^2\tilde{c}_{h}( \| \barh\|_{W^{1,\infty}(0,T;L^2(0,L))}^2 +\|\barh\|_{L^\infty(Q)}^2)\int_0^T\hspace{-2mm}\|\nabla \uu^1\|^2_{L^2(D)}+\|q_w^1\|^2_{L^2(0,L)}dt, \nonumber
\nonumber 
\end{eqnarray}
where $Q:=(0,T)\times(0,L)$,  $\tilde{C}_\varepsilon$ 
depends on $\varepsilon^{-1}$ and  $\tilde{c}_{h}:=1+\|h^2_t\|_{L^\infty(0,T;L^2(0,L))}^2$ is bounded. 

\subsection{Final estimate}

\label{sec:final_estim}

Let us summarize the above   estimates of the right-hand side of (\ref{weak_form_diff2}).
Note that the constants $C^{\RM{1}-\RM{4}}_\varepsilon$,  $C_h$  and $\tilde{c}_h$ in the estimates 
(\ref{timeder_est3}),  (\ref{viscous_diff_Is}), 
(\ref{convective_diff_estimate}),   (\ref{boundary_estim}) depend on the positive powers of the norms $\|\uu^i\|_{L^\infty(0,T;L^2(D))},  \, \|h^i\|_{L^\infty(0,T;H^2(0,L))}, \, i=1,2$, cf. (\ref{C_h}) and $\|h^2\|_{W^{1,\infty}(0,T;L^2(0,L))}$, respectively. 
\\Applying the Korn inequality we finally get from (\ref{weak_form_diff2}): 
\begin{eqnarray}\label{weak_form_diff3}
&&g(T)+ \frac{\alpha \mu }{\rho}\tilde{c}_{Ko}\int_0^T\|\pp^2\|_{W^{1,2}(D)}^2+\frac{c}{E}\|\partial^2_{y_1}\xi\|_{L^2(0,L)}^2 dt \\ \nonumber && \qquad 
 \leq{\rred 17} \varepsilon\int_0^T\|\pp^2\|_{W^{1,2}(D)}^2+C_1
\int_0^T\|\uu^1\|_{W^{1,2}(D)}^2\|\pp^2\|_{L^2(D)}^2 dt+\vartheta(T), \\&& \mbox{where}\nonumber
\\\nonumber &&
g(t):=\frac{\alpha}{2}\|\pp^2\|_{L^2(D)}^2(t)+\frac{1}{2E}\|\xi\|_{L^2(0,L)}^2(t)\\&&\nonumber \qquad \qquad+\frac{bE}{2}\|\bar{\eta}\|_{L^2(0,L)}^2(t)
+\frac{aE}{2}\|\partial_{y_1}\bar{\eta}\|_{L^2(0,L)}^2(t),
\\\nonumber
&&\vartheta(t):=C_2\int_0^t\|\bar{q}_{in}\|_{L^2(S_{in})}^2+\|\bar{q}_{out}\|_{L^2(S_{out})}^2+\|\bar{q}_{w}\|_{L^2(S_w)}^2ds\\\nonumber
&&\hphantom{....}+C_3\omega(t)\left[ \|\barh\|^2_{L^\infty(0,t;W^{1,\infty}(0,L))}+ \|\barh\|^2_{L^\infty(0,t;H^{2}(0,L))}
 +\|\barh\|_{W^{1,\infty}(0,T;L^2(0,L))}^2 \right]
,
\\\nonumber &&
\omega(t):=\int_0^t\hspace{-1mm}\|\uu^{1}\|_{W^{1,2}(D)}+\|\uu^{1}\|^2_{W^{1,2}(D)}+\|h^1_t\|_{1,\infty}^2+\|h^2_t\|_\infty^2
+{\rred 
  \|q^1_{w}\|_{L^2{(S_{w})}}^2 }
ds, 
\end{eqnarray}
 $\omega(t)\downarrow 0$ as $t\downarrow 0$, and 
\\ $C_1=2+C^\RM{1}_\varepsilon C_{K,\alpha}^2$, 
$C_2=C_\varepsilon C_{K,\alpha}^2$,
\\$C_3=C^2_{K,\alpha}[{\rred 1+C_\varepsilon}+C_\varepsilon C_h^2(1+C^6_{K,\alpha}) +C_\varepsilon^\RM{2}+C_h^2C_\varepsilon^\RM{3}+ C_h^3C_\varepsilon^\RM{4}+C_\varepsilon\tilde{c}_h]$, \\here
$C^\varepsilon$ denotes a constant depending on $\varepsilon^{-1}$ and $C_{K,\alpha}$.

Now, we chose  an $\varepsilon$ small enough such that $\frac{\alpha \mu }{\rho}\tilde{c}_{Ko}-{\rred 17} \varepsilon\geq \frac{\alpha \mu }{2\rho}\tilde{c}_{Ko}$, i.e. $\varepsilon\leq\frac{\alpha \mu }{\rred 34\rho}\tilde{c}_{Ko}$.
Thus, the constants of type $C_\varepsilon$ (and consequently the constants $C_1,C_1,C_3$) depend on $(\frac{\alpha \mu }{\rho}\tilde{c}_{Ko})^{-1}$. By this choice of $\varepsilon$, the inequality (\ref{weak_form_diff3}) yields
\begin{eqnarray}\label{weak_form_diff3a}
&&g(T)+ \frac{\alpha \mu }{2\rho}\tilde{c}_{Ko}\int_0^T\|\pp^2\|_{W^{1,2}(D)}^2+\frac{c}{E}\|\partial^2_{y_1}\xi\|_{L^2(0,L)}^2 dt \\ \nonumber && \qquad \qquad \leq C_1
\int_0^T\|\uu^1\|_{W^{1,2}(D)}^2\|\pp^2\|_{L^2(D)}^2 dt+\vartheta(T).
\end{eqnarray}
Moreover,   (\ref{weak_form_diff3}), (\ref{weak_form_diff3a}) are also valid  for  all fixed $t,\, 0\leq t\leq T$. Therefore after replacing $T$ by $t$ and omitting positive terms in (\ref{weak_form_diff3a}) we obtain
 $$g(t)\leq \vartheta(t)+C_4\int_0^t\|\uu^1\|_{W^{1,2}(D)}^2(s)g(s)ds, \quad C_4=2C_1\alpha^{-1}.$$ 

\noindent
Since $\|\uu^1\|_{W^{1,2}(D)}^2\geq 0$ is an integrable in time and $\vartheta\geq 0$ is a {continuous } non-decreasing function of time,  the Gronwall lemma, see e.g., \cite[Lemma 8.2.29]{FEI}, yields,
\begin{eqnarray}\label{weak_form_diff4}
&&g(t)\leq \vartheta(t)+C_4\int_0^t
\|\uu^1\|_{W^{1,2}(D)}^2(s)\vartheta(s) e^{C_4\int_s^t
\|\uu^1\|_{W^{1,2}(D)}^2(\tau)d\tau}\, ds,\\\nonumber
&&\hphantom{g(t)}\leq\vartheta(t)\left(1+C_4
e^{C_4\int_0^t\|\uu^1\|_{W^{1,2}(D)}^2(s)ds}\int_0^t\|\uu^1\|_{W^{1,2}(D)}^2(s)ds\right)\leq C_5
\vartheta(t).
\end{eqnarray}
Here $C_5$ depends on  $\alpha,\alpha^{-1},\, K,\, (\mu/\rho)^{-1}  \tilde{c}_{Ko}^{-1}$ and on the norms
$\|\uu^i\|_{L^\infty(0,t;L^2(D))}, \\ \|h^i\|_{L^\infty(0,t;H^2(0,L))}, i=1,2$, $\rred \|h^2\|_{W^{1,\infty}(0,t;L^2(0,L))}$, specified above.

Since $\|\pp^2\|^2_{L^2(D)}(t)\leq 2\alpha^{-1}g(t)$, and  $g(t)\leq C_5 \vartheta (t)$, see (\ref{weak_form_diff4}),   
 the right-hand side of (\ref{weak_form_diff3a}) can  now be bounded by
  \begin{eqnarray*}
  && C_1  \int_0^t\|\uu^1\|_{W^{1,2}(D)}^2\|\pp^2\|_{L^2(D)}^2 ds+\vartheta(t)\leq
  \\&&C_12\alpha^{-1}C_5\int_0^t\|\uu^1\|_{W^{1,2}(D)}^2 \vartheta(s)ds +\vartheta(t)\leq \\&&  \vartheta(t)(1+C_12\alpha^{-1}C_5)\int_0^t\|\uu^1\|_{W^{1,2}(D)}^2(s)ds\leq C_6 \vartheta(t), \end{eqnarray*}
and we finally get, cf. (\ref{weak_form_diff3a}),
\begin{eqnarray}\label{estim4}
&&g(t)+\int_0^t \frac{\alpha\mu }{2\rho}\tilde{c}_{Ko}\|\pp^2\|_{W^{1,2}(D)}^2+\frac{c}{E}\|\partial^2_{y_1}\xi\|_{L^2(0,L)}^2 dt  \leq 
C_6\vartheta(t),
\end{eqnarray} where the constant $C_6$ depends on $\alpha,\,\alpha^{-1},\, K,\, (\frac{\alpha\mu }{2\rho}\tilde{c}_{Ko})^{-1}$ and on the norms  of the weak solution $\|\uu^i\|_{L^\infty(0,T;L^2(D))}$ and the data $h^i$
 in ${L^\infty(0,T;H^2(0,L))}\cap W^{1,\infty}(0,T;L^2(0,L)),\ i=1,2$.
Here $g(t), \, \vartheta(t)$ are defined in (\ref{weak_form_diff3}). Thus, with notations from (\ref{pps}) and  $\xi:=E(\eta^1_t-\eta^2_t)$, $\bar{\eta}:=\eta^1-\eta^1$, we proved the following theorem.


\begin{theorem}[Continuous dependence on data]\label{th:contin_dependence}\ \\
Let $(\uu^1,\eta^1),\, (\uu^2, \eta^2)$ be  two weak solutions of the initial boundary value problem (\ref{i.1})--(\ref{i.10}) transformed to the rectangular fixed rectangular domain $D$ satisfying (\ref{weak_form}). Let the corresponding domain deformation be given  by some functions $h^1, \, h^2$ satisfying (\ref{assumptions}).
Let the transformed boundary pressures  $q_{in/out/w}^1$ and $q_{in/out/w}^2$ belong to $L^2(0,T;L^2(S_{in/out/s}))$, respectively.
Then for almost all $t \in (0,T)$ 
it holds:
\begin{eqnarray}\label{contin_dependence}
&&\frac{\alpha}{2}\|\RR\uu^1-\uu^2\|_{L^2(D)}^2(t)+\frac{{\alpha\mu}}{2\rho}\tilde{c}_{Ko}\int_0^t\|\RR\uu^1-\uu^2\|_{W^{1,2}(D)}^2ds
  \\&&\nonumber+\frac{E}{2}\|\eta_t^1-\eta_t^2\|_{L^2(0,L)}^2(t)+\frac{bE}{2}\|\eta^1-\eta^2\|_{L^2(0,L)}^2(t)
  +\frac{aE}{2}\|\eta^1_{y_1}-\eta^2_{y_1}\|_{L^2(0,L)}^2(t)
  \\&&\nonumber\ \ +cE\int_0^t\|\eta_{t}^1-\eta_{t}^2\|_{H^2(0,L)}^2ds 
\\&&
\nonumber \leq C_{7}\left(
\int_0^t\|q_{in}^1-q_{in}^2\|^2_{L^2(0,L)}+\|q_{out}^1-q_{out}^2\|^2_{L^2(0,L)}+\|q_{w}^1-q_{w}^2\|^2_{L^2(0,1)} ds \right.
\\&&\hphantom{\leq C_{8}[}\left.
+{ \omega(t)}\left[\nonumber\|h^1-h^2\|^2_{W^{1,\infty}(0,t;L^2(0,L))}+\|h^1-h^2\|^2_{L^\infty(0,t;H^{2} (0,L))}
\right]\right),
\end{eqnarray}
where \begin{eqnarray*}
 && \omega(t)=\int_0^t\hspace{-2mm}\|\uu^{1}\|_{W^{1,2}(D)}\hspace{-1mm}+\|\uu^{1}\|^2_{W^{1,2}(D)}\hspace{-1mm}+ \|h^1_t\|_{W^{1,\infty}(0,L)}^2\hspace{-1mm}+\|h^2_t\|_{L^\infty(0,L)}^2
  \hspace{-1mm} +{\rred  \|q^1_{w}\|_{L^2{(S_{w})}}^2 }ds\\
  &&\quad \mbox{and} \quad \omega(t)\downarrow 0 \quad  \mbox{for}\quad t\downarrow 0.\qquad 
\end{eqnarray*}
Here $\tilde{c}_{Ko}$ is the coercivity constant of the viscous form coming from the Korn inequality, $\alpha, \, K$ are given by (\ref{assumptions}), $\mu,\rho, E, a, b, c $ are given by the physical model  and  $C_{7}\equiv(C_2+C_3)C_6$ (see the proof above) is a constant depending on $\alpha, \alpha^{-1},K, \, \tilde{c}_{Ko}^{-1}$ and  on the norms  $\|h^i\|_{L^\infty(0,T;H^2(0,L))}$, {\rred $\|h^i\|_{W^{1,\infty}(0,T;L^2(0,L))}$,}  $\|\uu^i\|_{L^\infty(0,T;L^2(D))},$ $i=1,2$.

\end{theorem}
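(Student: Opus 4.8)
The plan is to test the two weak formulations~(\ref{weak_form}) with divergence-free test functions built from the difference of the two solutions, to subtract, and to close the resulting inequality by Korn's inequality and Gronwall's lemma; estimate~(\ref{contin_dependence}) is then exactly the outcome of the computation carried out in Sections~\ref{sec:admissible}--\ref{sec:final_estim}. Since $\dive_h$ depends on $h$ (see~(\ref{divergence})), the naive difference $\uu^1-\uu^2$ is solenoidal with respect to neither $h^1$ nor $h^2$. Following the idea of~\cite{PAD10}, but on the fixed reference domain $D$, the first solution is transported into the coordinates of the second by the matrix $\RR=J\JJ^{-1}$ of~(\ref{matrix_R}): as in Section~\ref{sec:admissible}, $\RR\uu^1$ is $\dive_{h^2}$-free and $\RR^{-1}\uu^2$ is $\dive_{h^1}$-free, so that $\pp^1=\uu^1-\RR^{-1}\uu^2$, $\pp^2=\RR\uu^1-\uu^2=\RR\pp^1$, together with $\xi=E(\eta_t^1-\eta_t^2)$, form an admissible pair in the sense of~(\ref{test_functions}); here one uses $u_1^i|_{S_w}=0$ so that $\psi_2^i|_{S_w}=\eta_t^1-\eta_t^2$ for $i=1,2$.

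Next I would subtract the weak formulation for $(\uu^1,\eta^1,h^1)$ tested by $\pp^1$ from the one for $(\uu^2,\eta^2,h^2)$ tested by $\pp^2$. The delicate point is the time-derivative pairing: $\RR\uu^1$ is not a weak solution associated with $h^2$, so one first derives the identity~(\ref{ALE_timeder})--(\ref{matrix_E1}) for $\bar\partial_t^{h^2}(h^2\RR\uu^1)$ and notes that the formulation solved by $\uu^1$ after multiplication by $\RR^T\JJ^{-1}=\hfrac\JJ^{-T}\JJ^{-1}$ is still amenable to the energy identity~(\ref{timederivativeproperty2}), because $\RR^T\JJ^{-1}$ is symmetric positive definite with $\|\RR^T\JJ^{-1}\|_\infty$ and smallest eigenvalue controlled by $\alpha,K$. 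Using~(\ref{timederivativeproperty2}) the time-derivative terms collapse to $\tfrac{1}{2}\int_D|\pp^2|^2(T)h^2(T)$ plus $\tfrac{1}{2}\int_0^T\int_0^L\partial_th^2|\psi_2^2|^2$ plus the error $\int_D\EE_1\cdot\pp^2$; Lemma~\ref{lemma:viscous} (identity~(\ref{visc_trans})) recasts the viscous terms as $\tfrac{\mu}{\rho}\int h^2|\eij_{h^2}(\pp^2)|^2$ plus error terms $I_1,I_2,I_3$; the trilinear form, rewritten as $B_h(\uu,\z,\pp)=h(\uu\cdot F_h^T\nabla)\z\cdot\pp$ and symmetrised by~(\ref{becko-rozdiel}), produces $b_{h^2}(\pp^2,\RR\uu^1,\pp^2)$ plus $\hfracdi$-weighted terms and the $T_{h^2}$ remainders; and the boundary integrals split as in~(\ref{bnd1})--(\ref{bnd3}) into the clean pressure differences and $\barh$-weighted remainders. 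This produces the master identity~(\ref{weak_form_diff2}).

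It then remains to bound every term on the right-hand side of~(\ref{weak_form_diff2}), the recurring tools being Korn's inequality with variable coefficients (\cite{PO03}, Corollary~4.1), which gives $\tfrac{\mu}{\rho}\int_D h^2|\eij_{h^2}(\pp^2)|^2\ge\tilde{c}_{Ko}\tfrac{\mu}{\rho}\alpha\|\nabla\pp^2\|_{L^2(D)}^2$; Lemma~\ref{lemma:essup} for the contributions in which $\partial_{y_1}^2 h^1$ or $\partial_{y_1}^2\barh$ (only in $L^2(0,L)$) multiply velocities; the two-dimensional Sobolev interpolation $\|\varphi\|_{L^4(D)}\le c\|\nabla\varphi\|_{L^2(D)}^{1/2}\|\varphi\|_{L^2(D)}^{1/2}$ and the trace estimate $\|\vv\|_{L^r(\partial D)}\le c\|\vv\|_{L^2(D)}^{1/r}\|\nabla\vv\|_{L^2(D)}^{1-1/r}$ for the convective and boundary terms; one integration by parts in $y_1$ (using $\psi_2^2=0$ on $S_{in}\cup S_{out}$) for the $\barh_{ty_1}$-piece of $\EE_1$; and Young's inequality, always arranged so that the $\|\pp^2\|_{W^{1,2}(D)}^2$ contributions carry a small prefactor $\varepsilon$ while every other term is controlled either by $\int_0^t\|\uu^1\|_{W^{1,2}(D)}^2\|\pp^2\|_{L^2(D)}^2$ or by $\vartheta(t)$, i.e.\ the pressure-difference integral plus norms of $\barh$ times the time-integrable weight $\omega(t)$ with $\omega(t)\downarrow0$ as $t\downarrow0$. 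Summing these estimates gives~(\ref{weak_form_diff3}); choosing $\varepsilon\le\frac{\alpha\mu}{34\rho}\tilde{c}_{Ko}$ absorbs the $\varepsilon$-terms into the coercive part and yields~(\ref{weak_form_diff3a}); the Gronwall lemma applied to $g(t)\le\vartheta(t)+C_4\int_0^t\|\uu^1\|_{W^{1,2}(D)}^2 g\,ds$ gives $g(t)\le C_5\vartheta(t)$, which re-inserted into the right-hand side of~(\ref{weak_form_diff3a}) produces~(\ref{estim4}). Unwinding $\pp^2=\RR\uu^1-\uu^2$, $\xi=E(\eta_t^1-\eta_t^2)$ and $\bar{\eta}=\eta^1-\eta^2$ then gives~(\ref{contin_dependence}) with $C_7=(C_2+C_3)C_6$.

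The main obstacle, and the bulk of the work, is the systematic bookkeeping of the transformation errors: one must isolate the error matrices $\EE_1$ (coming from $\bar\partial_t^{h^2}(h^2\RR\uu^1)$), $\EE(\cdot),\EE_2(\cdot),\EE_3,\EE_{\RR}$ (coming from Lemma~\ref{lemma:viscous}) and the convective remainders $T_{h^2}$, and then bound each of them in norms that are low enough --- only $L^2(0,L)$ for $\partial_{y_1}^2\barh$, only $L^\infty(0,T;L^2(0,L))$ for $\barh_t$, and so forth --- while keeping every term not absorbed into the coercive part proportional to a norm of $\barh=h^1-h^2$ multiplied by a quantity that is integrable in time and vanishes as $t\to0$. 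This last feature is not cosmetic: it is exactly what makes $\omega(t)$ small for small $t$ and is therefore indispensable for the contractivity of the fixed-point map in Section~\ref{sec:fixed_point}.
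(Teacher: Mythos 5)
Your proposal is correct and follows essentially the same route as the paper's own proof: the same admissible test functions $\pp^1=\uu^1-\RR^{-1}\uu^2$, $\pp^2=\RR\uu^1-\uu^2$, the same decomposition into time-derivative error $\EE_1$, viscous errors via Lemma~\ref{lemma:viscous}, convective remainders $T_{h^2}$, and boundary remainders, closed by Korn's inequality with variable coefficients, Lemma~\ref{lemma:essup}, Young's inequality with the $\varepsilon$-absorption $\varepsilon\le\frac{\alpha\mu}{34\rho}\tilde{c}_{Ko}$, and Gronwall's lemma. The only caveat is that this is a road-map rather than a proof, deferring all term-by-term estimation to the computations already carried out in Sections~\ref{sec:admissible}--\ref{sec:final_estim}.
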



Let us note that the reason for presence of matrix $\RR=J\JJ^{-1}$,  cf. (\ref{matrix_R}), in the above estimate lies in the fact that the domain of definition for solutions $\vv^1$ and $\vv^2$ differs due  to the coupling with the domain deformation. We recall that $\int_{\Omega(h^1)}v^1(X,t)dX=\int_{\Omega(h^2)}v^1(\Phi(x),t)Jdx=\int_{\Omega(h^2)}\tilde{v}^1(x,t)Jdx$, where $\Phi: \, \Omega(h^2)\to  \Omega(h^1)$
and
$ J=det \, \JJ,\ \JJ=\frac{dX}{dx}$.

\medskip

\noindent
\begin{remark}\ \\
  Since $\uu^1-\uu^2=\RR \uu^1-\uu^2+ \EE_\RR\uu^1$, see the lines before (\ref{viscous_diff}), we can write
  \begin{eqnarray*}
    \|\uu^1-\uu^2\|_{W^{1,2}(D)}\leq \|\RR \uu^1-\uu^2\|_{W^{1,2}(D)}+{\rred \|\EE_\RR \uu^1\|_{W^{1,2}(D)}.}
  \end{eqnarray*}
  Since the components of the matrix $\EE_\RR$ contain $\barh$ and $E$, see (\ref{barh_E}), with the assistance of  (\ref{assumptions}) and  Lemma {\ref{lemma:essup}} we finally get\footnote{with analogous consideration as estimates  in (\ref{matrix_E2_est}), (\ref{matrix_E2_est2}).}
  \begin{eqnarray*}
   {\rred \|\uu^1-\uu^2\|_{L^{2}(D)} \leq \|\RR \uu^1-\uu^2\|_{L^{2}(D)}+C_{K, \alpha}\|\barh\|_{W^{1,\infty}(0,L)}\|\uu^1\|_{L^{2}(D)}\|h^1\|_{W^{1,\infty}(0,L)},}\\
    \|\uu^1-\uu^2\|_{W^{1,2}(D)} \leq \|\RR \uu^1-\uu^2\|_{W^{1,2}(D)}
+C_{K, \alpha}\|\barh\|_{W^{1,\infty}(0,L)}\|\uu^1\|_{W^{1,2}(D)}\hspace{1.5cm}
    \\\hspace{4cm}+C_{K, \alpha}(\|\barh\|_{H^2(0,L)}+\|\barh\|_{L^\infty(0,L)}\|h^1\|_{H^2(0,L)})\|\uu^1\|_{W^{1,2}(D)}.
  \end{eqnarray*}
  Hence for $h^1\to h^2$ {\rred in $W^{1,\infty}(0,T;L^2(0,L))\cap H^1(0,T;H^{2}_0(0,L)) $} and $g^1_{in/out/w}\to g^2_{in/out/w}$ {\rred in $L^2(0,T;L^2(\partial D))$}  
    the above estimates and  (\ref{contin_dependence}) imply that \\$ \|\uu^1-\uu^2\|_{L^2(0,T;W^{1,2}(D))\cap L^\infty(0,T;L^{2}(D)) }\to 0$. Thus the weak solution is  continuously dependent  on  the domain deformation and  the boundary pressure. 
 \end{remark}

\begin{corollary}[Uniqueness of the  weak solution]
\label{col:uniqueness1}\ \\
If the boundary conditions for the pressure as well as the boundary deformation coincide for both solutions, i.e., $q_{in}^1=q_{in}^2,\, q_{out}^1=q_{out}^2,\, q_{w}^1=q_w^2, \, h^1=h^2:=h$, then there exists a unique weak solution to the problem  (\ref{i.1})--(\ref{i.10})  on $\Omega(h)$, i.e., to the problem  linearized with respect to the geometry in the sense of (\ref{assumptions})--(\ref{timederivativeproperty2}). 
\end{corollary}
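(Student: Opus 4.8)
The plan is to obtain uniqueness directly from the continuous dependence estimate of Theorem~\ref{th:contin_dependence}; the existence part is already settled, since a weak solution of the linearized problem on the fixed domain $\Omega(h)$ exists by \cite[Theorem~5.1]{HLN14}. So only uniqueness requires an argument. Concretely, I would take two weak solutions $(\uu^1,\eta^1)$, $(\uu^2,\eta^2)$ of the linearized problem corresponding to one and the same deformation $h^1=h^2=:h$ satisfying (\ref{assumptions}) and to one and the same boundary pressures $q_{in}^1=q_{in}^2$, $q_{out}^1=q_{out}^2$, $q_{w}^1=q_w^2$, and apply the estimate (\ref{contin_dependence}) to this pair.

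When $h^1=h^2$, the difference $\barh=h^1-h^2$ vanishes identically, so by (\ref{matrix_R}) the transformation matrix degenerates: $J=h^1/h^2=1$, $\JJ=\II$ and therefore $\RR=\II$. Consequently $\RR\uu^1-\uu^2=\uu^1-\uu^2$ in all the norms occurring on the left-hand side of (\ref{contin_dependence}). On the right-hand side, the three pressure differences $q_{in}^1-q_{in}^2$, $q_{out}^1-q_{out}^2$, $q_{w}^1-q_{w}^2$ are zero, and the bracket weighted by $\omega(t)$ contains only $\|h^1-h^2\|^2_{W^{1,\infty}(0,t;L^2(0,L))}$ and $\|h^1-h^2\|^2_{L^\infty(0,t;H^2(0,L))}$, which are zero as well; moreover $\omega(t)$ is finite because $\uu^1\in L^2(0,T;W^{1,2}(D))$ and $q^1_w\in L^2(0,T;L^2(0,L))$, and $C_7$ is finite by the assumed regularity of $h$ and of $\uu^i$. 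Equivalently, in the notation used in the proof of Theorem~\ref{th:contin_dependence}, one has $\vartheta(t)\equiv0$, hence $g(t)\le C_6\,\vartheta(t)=0$.

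Since every term on the left-hand side of (\ref{contin_dependence}) is nonnegative, it then follows that $\|\uu^1-\uu^2\|_{L^2(D)}(t)=0$ and $\int_0^t\|\uu^1-\uu^2\|_{W^{1,2}(D)}^2\,ds=0$ for a.a.\ $t\in(0,T)$, together with $\|\eta^1-\eta^2\|_{L^2(0,L)}(t)=\|\eta_t^1-\eta_t^2\|_{L^2(0,L)}(t)=\|\eta_{y_1}^1-\eta_{y_1}^2\|_{L^2(0,L)}(t)=0$ and $\int_0^t\|\eta_t^1-\eta_t^2\|_{H^2(0,L)}^2\,ds=0$. This yields $\uu^1=\uu^2$ a.e.\ in $(0,T)\times D$ and $\eta^1=\eta^2$ a.e.\ in $(0,T)\times(0,L)$, i.e.\ the weak solution on $\Omega(h)$ is unique, which is the claim; transforming back to $\Omega(\eta)$ (with $\vv^i(\,\cdot\,,h(\cdot)y_2,\cdot)=\uu^i$) gives the uniqueness statement in the original variables.

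I do not expect a genuine obstacle here — the statement is essentially a corollary. The only points that deserve care, and where the hypotheses of the corollary are actually used, are: checking that Theorem~\ref{th:contin_dependence} is applicable to the chosen pair of solutions (both lie in the regularity class~(i) of Section~\ref{sec:existence} and $h$ obeys (\ref{assumptions})), and verifying the finiteness of the weight $\omega$ and of the constant $C_7$ under these hypotheses; everything else is substitution into (\ref{contin_dependence}).
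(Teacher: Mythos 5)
Your proof is correct and takes essentially the same route as the paper: the paper's own proof simply observes that the claim follows from Theorem~\ref{th:contin_dependence} and that $\RR=\II$ when $h^1=h^2$, which is exactly what you have spelled out (substituting $\barh\equiv0$ and $\bar q_{in/out/w}\equiv0$ into (\ref{contin_dependence}) so that $\vartheta\equiv0$ and every nonnegative term on the left must vanish). Your added remarks on existence via \cite[Theorem~5.1]{HLN14} and on the finiteness of $\omega$ and $C_7$ are consistent with the paper's implicit reliance on those facts.
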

\begin{proof}
  The proof is a consequence  of Theorem~\ref{th:contin_dependence}. Note that for  $h^1=h^2$ we have $\RR=\II$ in (\ref{contin_dependence}).
  \end{proof}




\section{Fixed point procedure \\with respect to the geometry}
\label{sec:fixed_point}

In the previous section we have proven uniqueness of the weak solution $(\uu,\eta)$ defined on $\Omega(h)$ given by some sufficiently smooth functions $\delta, R_0$,  $ h:=R_0+\delta$ satisfying (\ref{assumptions}). This allows us to consider the following fixed point procedure with respect to the geometry of the computational domain.
We consider a  mapping $\cal F$ that is well defined through the weak formulation on $\Omega(h)$ (or its transformed form (\ref{weak_form})). It maps the given function $\delta$ to the unique weak solution $\eta$,   ${\cal F}(\delta)=\eta$. Once there exists a fixed point of this mapping, the original fluid-structure interaction problem (\ref{i.1})--(\ref{i.10}) is solved. Let us point out that  the Schauder fixed point argument yields the existence (but not uniqueness) of the weak solution to similar problems, see \cite{GRA05, GRA08, LENRU, HLN14, HLN14book}. Uniqueness for similar problems for  Newtonian fluids has been shown e.g.  in \cite{PAD10} by deriving an additional estimate on the continuous dependence on the initial data.

In the following procedure we apply the Banach fixed point theorem and obtain a unique fixed point  $\eta^\star={\cal F}(\eta^\star)$.
Using  this fixed point argument we prove at the same time the convergence of the iterative process  $\eta^k={\cal F}(\eta^{k-1})$, i.e. the convergence of the global iterative method with respect to the domain geometry. Additionally, Theorem \ref{th:contin_dependence} implies  uniqueness of $\uu$, thus  we finally obtain uniqueness of the (complete) weak solution $(\uu,\eta)$ obtained by this iterative procedure.
\medskip

\noindent
We consider following iterative process with respect to $\eta$;
\\Let $\delta=\eta^{k-1}, k=1,2, \ldots,\,$ with given $ \eta^0$,  e.g., $ \eta^0=0$.  Let $(\uu^k, \eta^k)$ be the  weak solution obtained on $\Omega(h)=\Omega(R_0+\eta^{k-1})$, i.e.,  $(\uu^k, \eta^k)$ satisfies (\ref{weak_form}) for $h=R_0+\eta^{k-1}$.

We define the space $\ZZ:=H^1(0,T;H_0^2(0,L))\cap W^{1,\infty}(0,T;L^2(D))$ and the mapping 
 \begin{eqnarray}\label{mapping_F}
{\cal F}:B_{\alpha}\subset \ZZ \to \ZZ,\ \ \ {\cal F}(\eta^{k-1})=\eta^k.
\end{eqnarray}
 Here $B_{\alpha}$ is a ball in  space $\ZZ$ chosen such that the necessary properties of the domain deformation from (\ref{assumptions}), avoiding the contact of the deforming wall with the fixed bottom boundary 
\footnote{The condition $\alpha\leq R_0+\eta\leq \alpha^{-1}$ in (\ref{assumptions}) avoids the contact of the moving wall with the fixed bottom boundary for $\alpha >0$.}, are  guaranteed. Let us specify $B_{\alpha}$ in more details.
 Let $0<\alpha<1, \, R_0\in C^2[0,L], \, 0<R_{\min}\leq R_0\leq R_{\max},$ $\{R_{\min},R_{\max}\}\in \RR^+$ be given. Moreover let $\alpha$ be such that
 \begin{equation}\label{prop_alpha}
   \alpha<\min  \{ R_{\min},\, ( {R_{\min}+R_{\max}})^{-1} \}.
 \end{equation}
 We define
 \begin{eqnarray}
B_{\alpha}=\{\eta\in \ZZ\, s.t.\ \eta(y_1, 0)=\eta_t(y_1, 0)=\eta_{y_1}(y_1, 0)=0,\  \|\eta\|_\ZZ\leq  R_{\min}-\alpha \}.\ \ 
\end{eqnarray}
 In what follows we show that each $\eta \in B_\alpha$  satisfies  properties  (\ref{assumptions}) for sufficiently small time $T$ and small boundary pressures. Indeed, due to $\ZZ\subset C(0,T;C^1[0,L])$ and the zero initial conditions there exists a time $T_{\alpha}$ such that 
 it holds for each $\eta \in B_\alpha$: $$ \max_{0\leq t\leq T_\alpha,\, 0 \leq y_1\leq L}|\eta(y_1,t)|=\|\eta\|_{C([0, T_\alpha]\times[0,L])}\leq R_{\min}-\alpha.$$
 Then the condition $\alpha\leq R_0+\eta \leq \alpha^{-1}$ is satisfied for each $\eta\in B_\alpha$ and for all $t\leq T_\alpha$, where  $\alpha$ is given by property (\ref{prop_alpha}). 
\\ Moreover since $\eta_t(y_1, 0)=\eta_{y_1}(y_1, 0)=0 \ \forall y_1\in [0,L]$, there exists a maximal time $T_K$ such that $$\int_0^{T_K}|\eta_t(y_1,s)|^2ds+|\eta_{y_1}(y_1,t)|^2\leq K\ \ \forall y_1\in[0,L], \ \forall t \in [0,T_K].$$ Thus, each function from  $ B_\alpha$   fulfills both properties (\ref{assumptions}) for all $y_1 \in [0,L]$ and sufficiently small time $\tilde{T}=\min\{T_\alpha, {T_K}\}$ and  it is an admissible function for the domain deformation in view of our approximation.

   In order to apply the Banach fixed point theorem to our mapping $\cal F$ we have to verify  that this mapping is ``onto", i.e., ${\cal F}(B_\alpha) \subseteq B_\alpha$. Indeed for $\eta^{k-1}\in B_\alpha$ the  weak solution obtained using $h=R_0+\eta^{k-1}$
   satisfies the following a priori estimate shown in \cite[Section 5, cf. (5.1) and  (4.7)]{HLN14},
   \begin{eqnarray}
     &&\|\uu^k\|^2_{L^\infty(0,T;L^2(D))}+\|\uu^k\|^2_{L^2(0,T;W^{1,2}(D))}\nonumber
     \\&&+\|\eta^k\|_{L^\infty(0,T;H^1(0,L))}^2+\|\eta^k_t\|_{L^\infty(0,T;L^2(0,L))}^2+ \|\eta^k_t\|_{L^2(0,T;H^2(0,L))}^2\nonumber
     \\&&\leq C(\alpha, \alpha^{-1})(1+{\cal H}e^{\cal H})\left(T\|R_0\|^2_{C^2[0,L]}+\int_0^T\|q_{\partial_D}\|^2_{L^2(\partial D)}dt\right),\label{apr_est}
     \end{eqnarray}
   where ${\cal H}\leq \int_0^T|\eta_t^{k-1}|+| \eta_t^{k-1}|^2dt$ for all $ y_1 \in[0,L]$
   and $\|q_{\partial_D}\|^2_{L^2(\partial D)}$ represents the sum of the norms of the boundary pressures, i.e.,  $\|q_{\partial_D}\|^2_{L^2(\partial D)}:=\|q_{in}\|^2_{L^2(S_{in})}+\|q_{out}\|^2_{L^2(S_{out})}+\|q_{w}\|^2_{L^2(S_{w})}$.
   Note, that due to  (\ref{assumptions}) which is fulfilled by all $\eta^k, \, k=1,2,\ldots,$ \ for  $\tilde{T}$ specified above, ${\cal H}$ is also uniformly bounded with  ${\cal H}\leq\sqrt{{T} K}+K$ for a final time $T\leq \tilde{T}$.  Thus the right-hand side of (\ref{apr_est}) can be bounded uniformly in $k$. Moreover,  (\ref{apr_est})  implies that
   $$\|\eta^k\|^2_\ZZ\leq  C(\alpha, \alpha^{-1})(1+(\sqrt{{T}K}+K)e^{(\sqrt{{T}K}+K)})\left({T}\|R_0\|^2_{C^2[0,L]}+\int_0^{{T}} \hspace{-2mm}\|q_{\partial_D}\|^2_{L^2(\partial D)}\right)$$
   for $T\leq \tilde{T}$.
   Therefore, for  given $\alpha, \, K$ 
   and  sufficiently small  time $T=:T_{\max}\leq \tilde{T}$  we get  $\|\eta^k\|_\ZZ\leq R_{\min}-\alpha$. Consequently,  ${\cal F}(B_\alpha) \subseteq B_\alpha$ for small enough final time $T_{\max}$. 

\medskip
   
   The next step  is to verify the contractivity of the mapping $\cal F$, i.e.,  to show for $\delta^1\neq \delta^2$,  $\delta^1:=\eta^{1,\, k-1}, \, \delta^2:=\eta^{2,\, k-1}$ and ${\cal F }(\delta^1)=\eta^{1,\, k}, \, {\cal F }(\delta^2)=\eta^{2,\,  k}$ that
   \begin{equation}\label{contractevness}
     \|{\cal F }(\delta^1)-{\cal F }(\delta^2)\|_\ZZ  \leq q \|\delta^1-\delta^2\|_\ZZ,\ \quad \mbox{where}\  \quad q <1.
     \end{equation}
   Theorem \ref{th:contin_dependence} from the previous section provides the essential estimate to prove the contractivity.  Indeed, (\ref{contin_dependence}) implies for $q_{in}^1=q_{in}^2,\, q_{out}^1=q_{out}^2,\, q_{w}^1=q_w^2$   that
   \begin{eqnarray*}
   &&  \|\eta^{1, \, k}-\eta^{2, \, k}\|^2_{W^{1,\infty}(0,t;L^2(D))\cap H^{1}(0,t;H^2(D)) }
    \\&& \qquad \leq C(E,a,b,c)\, C_7\, \omega(t)\|\eta^{1,\,  k-1}-\eta^{2, \, k-1}\|^2_{W^{1,\infty}(0,t;L^2(D))\cap H^{1}(0,t;H^2(D)) },
     \end{eqnarray*}
   where $\omega(t)\downarrow 0$ as $t \downarrow 0$. The constant $ C(E,a,b,c)$ depends only on physical data, whereas the constant $C_7$ depends on $K, \alpha, \alpha^{-1}$, on $C_h$, cf. (\ref{C_h}),   on $\tilde{c_h}$, cf. (\ref{boundary_estim}) and on $C_\varepsilon^{\RM{1}-\RM{4}}$, thus  $C_7$ contains
    positive powers of  $\|\eta^{i,\ k-1}\|_{L^\infty(0,T;H^2(0,L))}$, {\rred $\|\eta^{i,\ k-1}\|_{W^{1,\infty}(0,T;L^2(0,L))}$,}  
$\|\uu^{i,\ k}\|_{L^\infty(0,T;L^2(D))},\  i=1,2$.
    Estimate (\ref{apr_est}) and analogous consideration as above  imply $\|\uu^{i,\ k}\|_{L^\infty(0,T;L^2(D))}\leq {R_{\min}-\alpha}$ for  sufficiently small $T\leq T_{\max}$ as well as $C_h\leq1+2({R_{\min}-\alpha})$ and 
    $\tilde{c}_{h}\leq(1+({R_{\min}-\alpha})$.  Thus  $C_7$ can be bounded from above with some constant independent of $k$. 

 In view of this considerations  (\ref{contractevness}) holds with $q=C(E,a,b,c)C_7\omega(t)$  for sufficiently small $t\leq T_{\max}$  such that $\omega(t)<\frac{1}{C(E,a,b,c)C_7}$. We have shown, that the mapping $\cal F$ is a contraction.

 \medskip
 
 Finally, the Banach fixed point theorem, see, e.g., \cite[Theorem 1.5]{RUZF}, implies that  for
 sufficiently small time  and $ 0<\alpha<1$ satisfying (\ref{prop_alpha})  there exists one and only one fixed point $\eta^\star$ of the mapping $\cal F$ defined in (\ref{mapping_F}), $\eta^\star={\cal F}(\eta^\star)$. Moreover, the  iterative process $\eta^k={\cal F}(\eta^{k-1})$ converges, i.e., $ \eta^k\to \eta^\star$ in $\cal Z$. 
   The continuous dependence of the fluid velocity $\uu$ on the domain deformation (Theorem \ref{th:contin_dependence})  and
  {\rred Corollary \ref{col:uniqueness1} applied for $h^1=R_0+\eta^\star, h^2=R_0+\eta^k$ imply moreover
    the convergence of $\uu^k\to \uu^\star$ in  $L^2(0,T;W^{1,2}(D))\cap L^\infty(0,T;L^{2}(D))$ and the uniqueness of the  weak solution $(\uu^\star,\eta^\star)$  defined on $\Omega(\eta^\star)$.}

  Thus, we have proven that the global iterative method with respect to the domain deformation, used in our numerical approximation of the problem  (\ref{i.1})--(\ref{i.10}), see  \cite{LZ2008,LZ2010, HLR2012},  converges at least
  for sufficiently small time.
 {\red  In this construction of the solution the contact of the moving wall with the fixed bottom boundary is avoided.}
 
\subsection*{Concluding remarks}

In  \cite{HLN14}, \cite{HLN14book} we have studied the existence of weak solution to the similar fluid-structure interaction problem for shear-dependent fluids with viscosity obeying  the power-law $\mu(|e(\vv)|)= \mu{(1+|e( \vv)|^2)^{\frac{p-2}{2}}}$ for $p\geq 2$. In that case the corresponding space of weak velocities is  $L^p(0,T;W^{1,p}(D))$.
Therefore, the norms  $\|h^1-h^2\|$ on the right-hand side of  estimate (\ref{contin_dependence}) would appear  with powers $2$ and $p' <2$, where $\frac1{p}+\frac1{p'}=1$. This leads to a difficult problem, since such an estimate would  only imply
$\|{\cal F }(\delta^1)-{\cal F }(\delta^2)\|_\ZZ  \leq q \|\delta^1-\delta^2\|^{p'/2}_\ZZ,\, q<1$, compare (\ref{contractevness}). This estimate does not imply the Cauchy-property of the sequence $\eta^k$, that is essential 
for the limiting process $k\to \infty$ 
in the proof the Banach fixed point theorem and consequently for the existence of a fixed point.

Another challenge is to  show the continuous dependence of the weak solution on initial data
for a shear-dependent power-law fluid and to generalize  the result for Newtonian fluids from \cite{PAD10}. 
In this case, again, due to the $L^p$-structure  of the fluid velocity, the estimate of the viscous term may cause a difficulty, since  except of the $p$-th power of the gradients of the weak solution it also contains  the term $\partial^2_{y_1}(\hfrac)$, cf. (\ref{matrix_E2}), that is only bounded in $L^2(0,L)$. Thus, more regularity  of the weak solution would be necessary for the corresponding estimates. In this case  the so called weak - strong uniqueness would be  an interesting tool to study such fluid-structure interaction problems for power-law fluids. This may be a goal of our future work.

 \bigskip

 \noindent
 {\bf{Acknowledgment}}\\
 The author has been supported 
 by the German Research Foundation DFG, Project No.  HU 1885/1-2.


\bigskip

 \appendix

 
 \section{Compactness argument for the  $(\kappa, \veps)$-approximate weak solution}

 \label{appendix}

 The existence of a weak solution to the problem  (\ref{i.1})--(\ref{i.10}) on the deformable domain $\Omega(h)$ moving according to the a priori given function $h(y_1,t)$, cf. (\ref{assumptions}),  for  power-law fluids has been studied in our previous work \cite{HLN14} by transformation to the fixed rectangle domain $D$. See also \cite{FZ10} for similar result for Newtonian fluid.
 We have applied the artificial compressibility method in order to handle the  divergence-free
 condition and regularized it  using the parabolic equation for pressure $\veps \partial_t p-\veps \Delta p+ \dive \vv=0$. Moreover,  the fluid-structure coupling condition  on the moving boundary (\ref{i.55}), (\ref{dirichlet}) has been approximated  by introduction of the semi-pervious boundary, see  \cite[(2.6)--(2.7)]{HLN14} for more details. Condition (\ref{dirichlet}) has been replaced by $\kappa(\partial_t \eta-v_2|_{\Gamma_w})$, where $\kappa$ is a penalization parameter; the original  coupling condition is satisfied for $\kappa = \infty$.

 Taking $\kappa=\veps^{-1}$, the passage to the limit in the $(\kappa, \veps)$-approximate weak solution $(\uu_\kappa, q_\kappa,\sigma_\kappa)$, $\uu_\kappa:=\uu_{\kappa,\veps}, \, q_\kappa:= q_{\kappa,\veps},\, \sigma_\kappa:=\partial_t\eta_{\kappa,\veps}$
 for  $\veps \to 0, \ \kappa \to \infty $ has been performed  at once. 
 Here  $q_{\kappa,\veps}(y,t)= p_{\kappa,\veps}(x,t)/\rho$ denotes the pressure transformed to fixed domain $ D$.  For the limiting process in our $(\kappa, \veps)$-approximate weak formulation the strong convergence
 $\uu_\kappa \to \uu$ 
 is necessary.  
Since the a priori estimates for $\partial_t \uu_{\kappa}$ depend on $\veps, \, \kappa$, the classical  Lions-Aubin compactness argument cannot be applied.
  The strong convergence for similar artificial compressibility approximation on fixed domains has been shown in  \cite[Ch. III., Th. 8.1]{TEMAM} by the  compactness argument involving fractional time
derivatives \cite[Ch. III., Th. 2.2]{TEMAM} and the Fourier transform. However, due to difficulties related to the moving domain and transformed  differential operators depending on time,  this approach seems to be inappropriate  for deformable domains. 

In our approach we  used  a compactness criterion based on integral equicontinuity in time \cite[Lemma 1.9]{AL}. {\rred The integral equicontinuity estimates have been shown in our previous works, see \cite[(5.4)]{HLN14}, \cite[(8.4)]{FZ10}. However,  some
  terms containing the 
  velocity divergence 
  has not been treated carefully in the mention works, 
 the dependence of equicontinuity  estimates 
on  $\veps$ has been ignored. 
}

\medskip

{\rred In the following} lines we present a corrected proof of those  integral equicontinuity estimate   for
$\uu_\kappa$, 
which is {independent on $\veps$}.
To this end  we  however shall simplify our artificial  compressibility condition and consider only elliptic equation for pressure, $$-\veps \Delta p_\kappa+\dive \vv_\kappa=0\ \mbox{in}\ \Omega(h),\ \nabla p_\kappa \cdot\nn=0 \  \mbox{on}\ \partial \Omega(h), \ {\rred\int_{\Omega(h)}p_\kappa=0}$$ in our  $(\kappa, \veps)$-approximation of (\ref{i.1})--(\ref{i.10}).
{\rred Note, that the proof of existence of weak solution for fixed $\varepsilon,\, \kappa$ with only the elliptic (instead of parabolic) compressibility approximation can be obtained analogously as the proof in  \cite[Section 4]{HLN14}, compare also \cite{FZ10}, using the same  techniques.
  Here 
  the coercivity of the bilinear form arising in the pressure equation 
is now guaranteed due to the  average condition $\int_{\Omega(h)}p_\kappa=0$ (Poincar\'e-Wirtinger inequality). 
    Repeating the estimation process from  \cite{HLN14} we obtain the first a priori estimates for $\uu_\kappa(y,t)=\vv_\kappa(x,t)$ and $\eta_\kappa(x_1,t)$. For the pressure one  gets now $p_\kappa(x,t)=q_\kappa(y,t)\in L^2(0,T;H^1(D))$. These estimates are, similarly as in  \cite{HLN14, FZ10} independent on $\kappa, \veps$. Using the pressure equation  (i.e. considering zero test function for $\uu_\kappa, \eta_\kappa$) we obtain by letting $\kappa\to \infty$,  that the weak limit of $\uu_\kappa$ is divergence free almost everywhere, compare  \cite[Section 5]{HLN14} 
}.

Now we concentrate  on the proof of  the compactness of $\uu_\kappa$. In what follows we show  the integral equicontinuity for $\uu_\kappa$, which is independent on $\veps$. Note, that terms coming from the divergence of the velocity  and  the pressure gradient  are bounded  by a priori estimates with $c/\sqrt{\veps}$ for some constant $c$, i.e., not uniformly in $\veps$.  The crucial idea in our new proof is to eliminate these terms by choosing appropriate test functions involving the so called Piola transformation of the solution. 

\medskip

The corresponding  $(\kappa, \veps)$-approximate weak formulation  of the coupled problem with elliptic compressibility approximation reads as follows:
\begin{eqnarray}\label{WF1}
  &&\int_0^T
- \Big \langle \partial_t (h\uu_\kappa),\pp \Big \rangle 
   \, dt=\\
&&
  \int_0^T\bigg\{\int_D -\frac{\partial h}{\partial t}\frac{\partial (y_2\uu_\kappa )}{\partial y_2}\cdot \pp
  -hq_\kappa \dive_h \pp+ h \dive_h\uu_\kappa \phi\, dy
\nonumber  \\&&\hphantom{=\int_0^T\ }
+\fl\uu_\kappa , \pp \fr_h\, + \, b_{h}(\uu_\kappa , \uu_\kappa , \pp )+\veps a_1(q_\kappa,\phi) \nonumber \\
&&
\hphantom{=\int_0^T\ }
{ +}\int_0^1 h(L) q_{\it out}(y_2,t)\psi_1\, (L,y_2,t)
-h(0) q_{\it in}(y_2,t)\psi_1\,(0,y_2,t)\,dy_2 \nonumber \\
&&
\hphantom{=\int_0^T\ }
+\int_0^L\left(
q_{\it w}+\frac{1}{2}\frac{\partial h}{\partial t}u_{2,\kappa}+\kappa(u_{2,\kappa}-\sigma_\kappa)
\right)\psi_2\,(y_1,1,t)dy_1
\nonumber\\
&&
\hphantom{=\int_0^T\ }
+\int_0^L\frac{\partial\sigma_\kappa }{\partial t} \xi 
+c\,\frac{\partial^2 \sigma_\kappa}{\partial y_1^2}\frac{\partial^2\xi}{\partial y_1^2}
+a\frac{\partial}{\partial y_1} \left(\int_0^t\sigma_\kappa(y_1,s)ds\right)\frac{\partial\xi}{\partial y_1}(y_1,t)  \nonumber \\
&&\hphantom{+\int_0^T\int_0^L \left(\right.}\hspace{2mm}
-{ a}\frac{\partial^2 R_0}{\partial y_1^2}\xi
+b\int_0^t \sigma_\kappa(y_1,s)ds\, \xi (y_1,t)+\frac{\kappa}{E}(\sigma_\kappa-u_{2,\kappa})\xi\,dy_1\,\bigg\}\,dt \;  \nonumber
\end{eqnarray}
 for every
$(\pp ,\phi , \xi)\in H_0^1(0,T;\VV)\times L^2(0,T;H^1(D))\times
L^2(0,T;{ H^2_0}(0,L))$, where $\VV\equiv\{W^{1,2}(D);\ w_1=0 \ \mbox{on}\ S_w,\  w_2=0 \ \mbox{on}\ S_{in}\cup S_{out}\cup S_c \}$, cf. (\ref{eij}), (\ref{4.3}) 
  and the definition of the form $a_1(\cdot,\cdot)$ in \cite[(2.10)]{HLN14}.

  \medskip
 
  Let us first specify  appropriate test functions in order to obtain the integral equicontinuity estimate. 
  By similar procedure as in \cite[p. 221-222]{HLN14}, i.e. considering test functions $\chi_\delta(s)(\pp,\phi,\xi)$, where $\chi_\delta(s)$ is a smooth approximation of the  characteristic function of the interval $(t,t+\tau)$,  $\pp=\pp(y,s)$ is now a time-dependent function,  $\phi(y), \xi(y_1) $ are time-independent, and letting the smoothness parameter $\delta \to 0$
  we obtain from (\ref{WF1}) 
\begin{eqnarray}\label{WF2}
 && \hspace{0mm}-\int_0^{T-\tau}
 \hspace{-2mm}\int_D  
[ (h\uu_\kappa\psi)(y,t+\tau)- (h\uu_\kappa\psi)(y,t)]dy\, 
\\&&\hspace{13mm}+\int_0^L \hspace{-2mm} [\sigma_\kappa (y_1,t+\tau)- \sigma_\kappa(y_1,t)]\xi(y_1)dy_1\,  dt\nonumber
 \\\ &&\hspace{0mm}=\int_0^{T-\tau}\hspace{-2mm}\int_t^{t+\tau} \hspace{-2mm} \int_D \left \{h(y_1,s)\dive_{h(s)}\uu_\kappa(y,s)\phi(y)
     -(h q_\kappa)(y, s)\dive_{h(s)}\pp(y,s)\nonumber\right.
\\ &&\hphantom{\hspace{6cm}}\left.- h\uu_\kappa(y,s)\cdot {\partial_t \pp(y,s)} +\ldots \right\}\, dy \, ds\, dt.\nonumber
   \end{eqnarray}
Note that in our case  the operator of divergence depends on $h(y_1,t)$, thus on time. 
In order to commute the integral $\int_t^{t+\tau}ds$ with the $\dive_h$ - operator and to eliminate the divergence terms on the right-hand side of (\ref{WF2})  we rewrite them   in terms of the time independent $\dive$ - operator using the Piola transformation.

We consider the Piola transformation $\vv_P(y,t): D\times[0,T]\to \RR^2$ of our velocity field $\vv: \Omega(h)\times[0,T]\to \RR^2$, compare e.g., \cite[Chapter II]{SINI}
$$\vv_P(y,t):=det (\nabla {\cal L}(y,t)) {\nabla \cal L}^{-1}(y,t)\vv(x,t)
,$$
where  the mapping ${\cal L}(y,t)\stackrel{\mbox{\footnotesize def}}{=}(y_1, y_2 h(y_1,t))=x(t)\in \Omega(h)$ describes the transformation of variables between $\Omega(h)$ and $D$,  
\begin{eqnarray}\label{matrices}
  {\nabla \cal L}=\frac{dx}{dy}=
  \begin{bmatrix}1 & 0\\y_2 \partial_{ y_1}{h}& {h}\end{bmatrix}=:\JJ,
\quad {\nabla \cal L}^{-1}=\begin{bmatrix}1 & 0\\-\frac{y_2}{h}\partial_{ y_1}{h}& \frac{1}{h}\end{bmatrix}=:\JJ^{-1}, 
\end{eqnarray} 
compare also (\ref{matrix_R})  for $h^1\equiv h,h^2 \equiv 1.$  Replacing $\vv(x,t)$ by $\uu(y,t)$ we rewrite the Piola transformation as
$$\vv_P(y,t)=J \JJ^{-1}(y,t)\uu(y,t)=\RR(y,t)\uu(y,t),$$
where $\RR:= J \JJ^{-1}$ and $J:=det(\nabla {\cal L})=h$.  Note that since $\JJ^{-1}=J^{-1} cof \JJ^T$ we have $\RR=cof \JJ^T$.
Now let us denote the divergence operators with respect to fixed and moving coordinates as $\dive \, \uu (y,t):=\partial_{y_1}u_1+\partial_{y_2}u_2, \ \dive_x \vv (x,t):=\partial_{x_1}v_1+\partial_{x_2}v_2,  $ respectively. 
From the Piola identity: $\dive \,( cof \nabla {\cal L})=0$  and the transformation of  the differential operator
\footnote{$\dive \,\vv_P(y,t)=\dive (\RR\uu(y,t))=0+(\RR^T\nabla_y)\cdot \uu(y,t)=(\RR^T\nabla {\cal L}^T\nabla_x)\cdot \vv(x,t) =J(y,t) \dive_x \vv (x,t).$}
it follows that $$\dive\, \vv_P(y,t)=J(y,t) \dive_x\, \vv (x,t), $$ 
i.e., divergence  of the velocity with respect to time dependent coordinates $x(t)$
can be expressed by means of the divergence  in time-independent coordinates $y$ of its Piola transformation. Thus,
\begin{equation}\label{Piola}
  J(y,t) \dive_x\, \vv (x,t) \Big[\equiv h(y_1,t) \dive_{h(y_1,t)}\uu(y,t)\Big] =\dive\,(\RR(y,t)\uu(y,t)).
\end{equation}
With the assistance of  (\ref{Piola}) we obtain for the divergence terms on the  right-hand side of (\ref{WF2}),
\begin{equation}\label{div_terms2}
  \int_0^{T-\tau} \int_D \Big [\dive\left(\int_t^{t+\tau} (\RR\uu_\kappa)(y,s)ds\right)\phi(y)
    -\int_t^{t+\tau} \hspace{-2mm} q_\kappa(s)\dive\, (\RR\pp)(y,s)ds\Big ]\, dy \,  dt.
\end{equation}
Now, let us consider the following test functions
\begin{eqnarray}\label{testfunctions1}
  \pp(y,s)&=&\RR^{-1}(y,s)\left[(\RR \uu_\kappa)(y,t+\tau)-(\RR \uu_\kappa)(y,t)\right],
  \\\nonumber \phi(y;t)&=&q_\kappa(y,t)- q_\kappa(y,t+\tau),
  \\\xi(y_1;t)&=& {\sigma_\kappa}(y_1,t+\tau)-{\sigma_\kappa}(y_1,t),\nonumber
\end{eqnarray}
where $s\in [t,t+\tau]$ and $t\in [0,T-\tau] $ is fixed.
Let us insert the test functions (\ref{testfunctions1}) into  (\ref{div_terms2}). Using the following notations
$$U(t):=\int_t^{t+\tau}\RR(s)\uu_\kappa(s)ds,\quad V(t):=\int_t^{t+\tau}q_\kappa(s)ds,$$
  (\ref{div_terms2}) can be rewritten as
\begin{eqnarray*}
& -& \int_0^{T-\tau}\hspace{-2.5mm} \int_D \hspace{-1mm}\Big [V'(t)\dive\, U(t)+V(t)\dive\,U'(t)\Big]dydt\\
 && = \int_D\hspace{-2mm}  V(0)\dive\, U(0)-V(T-\tau)\dive\, U(T-\tau)\,dy\\
&&=\int_D\int_0^\tau \hspace{-2mm}q_\kappa ds\int_0^\tau\hspace{-2mm}\dive (\RR\uu_\kappa)ds-\int_{T-\tau}^{T}\hspace{-3mm} q_\kappa ds\int_{T-\tau}^T\hspace{-3mm}\dive (\RR\uu_\kappa)ds\, dy.
\end{eqnarray*}

\noindent
According to the above considerations using the test functions  (\ref{testfunctions1})
we finally obtain from  (\ref{WF2}), 
\begin{eqnarray}\label{WF3}\nonumber
 \hspace{-7mm} &&\int_0^{T-\tau}
 \hspace{-3mm} \int_D \hspace{-2mm}  \big[(h\uu_\kappa)(t+\tau)\cdot \RR^{-1}(t+\tau)  -(h\uu_\kappa)(t)\cdot \RR^{-1} (t) \big]\big[(\RR\uu)(t+\tau)-(\RR\uu)(t)\big]  dy 
 \\ \hspace{-7mm} &&\qquad +\int_0^L [\sigma_\kappa(t+\tau)-\sigma_\kappa(t)]^2 \, dy_1\, dt\ =\nonumber
 \\ \hspace{-7mm} &&- \underbrace{ \int_D\int_0^{\tau}\hspace{-2mm} q_\kappa(s)\,ds \int_0^{\tau} h(s)\dive_{h(s)}\uu_\kappa(s)\, ds}_{I}
+\underbrace{\int_{T-\tau}^{T}\hspace{-3mm}q_\kappa(s)\,ds\int_{T-\tau}^{T} \hspace{-2mm}h(s)\dive_{h(s)}\uu_\kappa(s)\, ds\, dy}_{II} \nonumber
\\ \hspace{-7mm} &&+ \underbrace{\int_0^{T-\tau}\hspace{-2mm}\int_t^{t+\tau}\hspace{-2mm}\int_D  h\uu(s)\cdot (\partial_t\RR^{-1}(s))\big[(\RR\uu)(t+\tau)-(\RR\uu)(t)\big]\, dy\, ds}_{III}\, dt +\ldots  \hspace{-0mm} 
\end{eqnarray}


In what follows we  show, that  the terms $I,\, II$ on the right-hand side of (\ref{WF3}) can be estimated with $\tau C_{K,\alpha}$ independently on $\veps$,  where $C_{K,\alpha}$ is some constant dependent on $\alpha, K$, cf. (\ref{assumptions}).
We will demonstrate this estimate for term $I$, term $II$ can be estimated analogously. 
Using the equation for pressure, i.e., inserting  test functions $(\oo,\phi,0)$ into (\ref{WF1}), integrating it over $\int_0^\tau ds$ instead of  $\int_0^T dt$,  where $\tau <T$ is fixed, and using time-independent test function $\phi(y\,;\tau)=\int_0^\tau q_\kappa(y,s)ds$  we obtain
$$I=\veps\int_0^\tau a_1(q_\kappa(y,s),\phi(y;\,\tau)) ds.$$
From  the definition of $a_1(\cdot,\cdot)$, see \cite[(2.10)]{HLN14} and (\ref{assumptions})
it follows $$I \leq C_{K,\alpha} \int_0^\tau\|\sqrt{\veps}\nabla q_\kappa(s)\|_{L^2(D)}ds\Big\|\int_0^\tau\sqrt{\veps}\nabla q_\kappa (s)ds \Big\|_{L^2(D)}.$$
Since  by the H\"older inequality we have $$\int_0^\tau\|\sqrt{\veps} \nabla q_\kappa\|_{L^2(D)}ds\leq \tau^\frac1{2}\|\sqrt{\veps}q_\kappa\|_{L^2(0,T;H^1(D))},$$ 
$$\Big\|\int_0^\tau \sqrt{\veps}\nabla q_\kappa (s)ds \Big\|_{L^2(D)}^2 \leq \tau \int_D\int_0^\tau|\sqrt{\veps}\nabla q_\kappa|^2 (y,s)ds \,dy
\leq \tau \|\sqrt{\veps}q_\kappa\|_{L^2(0,T;H^1(D))}^2,$$
we finally obtain
\begin{equation}\label{est1}
I\leq \tau C_{K,\alpha} \|\sqrt{\veps} q_\kappa\|_{L^2(0,T;H^1(D))}^2\leq c\tau.
\end{equation}

\medskip

\noindent
Now we rewrite the first term on the left-hand side of (\ref{WF3}) by means of square of time differences. To this end we first  rewrite it as
\begin{eqnarray}\label{timeder_1}
  \int_0^{T-\tau}\hspace{-3mm}
\int_D \hspace{-2mm} \big[(\RR^{-T}h\uu_\kappa)(t+\tau)  -(\RR^{-T}h\uu_\kappa)(t) \big]\cdot\big[(\RR\uu_\kappa)(t+\tau)-(\RR\uu_\kappa)(t)\big] dy\, dt \  \ \  
\\  =\int_0^{T-\tau}\hspace{-2mm}
\int_D  \big[(\JJ^{T}\uu_\kappa)(t+\tau)  -(\JJ^{T}\uu_\kappa)(t) \big]\cdot\big[(\RR\uu_\kappa)(t+\tau)-(\RR\uu_\kappa)(t)\big]  dy\, dt,\ \nonumber
\end{eqnarray}
here we have used $\RR^{-T}=h^{-1}\JJ^T$.
\\Further, since $\JJ^T\uu_\kappa\cdot\RR\uu_\kappa=\RR^T\JJ^T \uu_\kappa\cdot \uu_\kappa=h|\uu_\kappa|^2$,
 we  can rewrite  equality (\ref{timeder_1})  in terms  of  square of  the time difference of $\sqrt{h}\uu_\kappa$ and an additional term $ IV$ as follows.
 \begin{eqnarray}\label{timeder_2}
 \hspace{-5mm}&&\int_0^{T-\tau} \hspace{-2mm}\int_D\left[(\JJ^T \uu_\kappa)(t+\tau)-(\JJ^T \uu_\kappa)(t)\right]\cdot
\left[(\RR \uu_\kappa)(t+\tau)-(\RR \uu_\kappa)(t)\right]dy \, dt \ \quad \quad \ \ \ \\\nonumber
 \hspace{-5mm} &&\qquad = \int_0^{T-\tau} \hspace{-2mm}\int_D\left[(\sqrt{h}  \uu_\kappa)(t+\tau)-(\sqrt{h} \uu_\kappa)(t)\right]^2dy\, dt+ IV, \quad \mbox{where}\\\nonumber
 \hspace{-5mm} && IV:=\int_0^{T-\tau}\hspace{-2mm} \int_D 2(\sqrt{h} \uu_\kappa)(t+\tau)\cdot (\sqrt{h}  \uu_\kappa)(t)
\\ \hspace{-5mm}&& \qquad\qquad\qquad-(\JJ^T\uu_\kappa)(t)\cdot(\RR\uu_\kappa)(t+\tau)
-(\JJ^T\uu_\kappa)(t+\tau)\cdot(\RR\uu_\kappa)(t)dy\, dt.\nonumber
 \end{eqnarray}
 Due to {\red
$\RR\uu_\kappa=(\sqrt{h}\JJ^{-1})(\sqrt{h}\uu_\kappa)$
   ,} the term $IV$ can be rewritten as
 \begin{eqnarray*}
   IV\hspace{-3mm}&=&\hspace{-3mm}\int_0^{T-\tau} \hspace{-2mm} \int_D(\JJ^T\uu_\kappa)(t+\tau)\cdot
   \left[(\sqrt{h}\JJ^{-1})(t+\tau)-(\sqrt{h}\JJ^{-1})(t)\right](\sqrt{h}\uu_\kappa)(t)
   \\&&\hphantom{\int_0^{T-\tau} \hspace{-2mm}}
  +(\JJ^T\uu_\kappa)(t)\cdot
   \left[(\sqrt{h}\JJ^{-1})(t)-(\sqrt{h}\JJ^{-1})(t+\tau)\right](\sqrt{h}\uu_\kappa)(t+\tau)dy\, dt\\
   \hspace{-3mm}&=&\hspace{-3mm}\int_0^{T-\tau} \hspace{-3mm} \int_D\hspace{-2mm}(\JJ^T\uu_\kappa)(t+\tau)\cdot\partial_t\MM
   (\sqrt{h}\uu_\kappa)(t)
   -(\JJ^T\uu_\kappa)(t)\cdot\partial_t \MM
   (\sqrt{h}\uu_\kappa)(t+\tau)dy\,dt,\\
   && \mbox{where}\ \MM:=\int_t^{t+\tau}\hspace{-3mm}\sqrt{h}(s)\JJ^{-1}(s)ds.
 \end{eqnarray*}
 Finally, taking into account (\ref{timeder_1}) and  (\ref{timeder_2})  we obtain from  (\ref{WF3}),
 \begin{eqnarray}\label{WF4}
\int_0^{T-\tau} \hspace{-3mm} \int_D\left[(\sqrt{h}  \uu_\kappa)(t+\tau)-(\sqrt{h} \uu_\kappa)(t)\right]^2dy+\int_0^L [\sigma_\kappa(t+\tau)-\sigma_\kappa(t)]^2  dy_1\, dt\, \nonumber \\ = I+II+III+IV+\ldots\ \qquad\qquad 
 \end{eqnarray}
 In previous lines we have already shown that   $I+II\leq c\tau$ independently on $\veps$, cf. (\ref{est1}). 
 Now we estimate the additional  terms $III,\, IV$ arising from the (weak) time derivative of $(h\uu_\kappa)$, cf.  (\ref{WF3}),  (\ref{timeder_2}) with $c\tau$ (independently on $\veps$) as well.
  Indeed, due to the assumptions on $h$, (\ref{assumptions}),
 it is obvious that $|\JJ^T|\leq C_{K,\alpha}$, moreover
 $$|\partial_t \MM|\leq \int_t^{t+\tau}|\partial_t(\sqrt{h}\JJ^{-1}(s))|ds \leq  C_{K,\alpha}\int_t^{t+\tau}|h_t(s)|+|h_{ty_1}(s)|ds,$$
 cf. (\ref{matrices}). Therefore
\begin{eqnarray}\label{est2}
 IV&\hspace{-2mm}\leq\hspace{-2mm}& C_{K,\alpha}\int_0^{T-\tau}\hspace{-2mm}\int_t^{t+\tau}\hspace{-4mm}\|h_t(s)\|_{W^{1,\infty}(0,L)}ds \|\uu_\kappa(t)\|_{L^2(D)}\|\uu_\kappa(t+\tau)\|_{L^2(D)}dt\qquad
\qquad  \\\nonumber&\hspace{-2mm}\leq\hspace{-2mm}& C_{K,\alpha} \|\uu_\kappa\|_{L^\infty(0,T;L^2( D))}^2 \tau \int_0^{T-\tau} [\|h_t\|_{W^{1,\infty}(0,L)}]_\tau
 \\\nonumber&\hspace{-2mm}\leq\hspace{-2mm}& \tau C_{K,\alpha} \|\uu_\kappa\|_{L^\infty(0,T;L^2( D))}^2 \|h_t\|_{L^2(0,T;W^{1,\infty}(0,L))} \leq c\tau. 
\end{eqnarray}
Here we have used the fact that $\|[\varphi]_\tau\|_{L^2(0,T-\tau)}\leq \|\varphi\|_{L^2(0,T)}, $
$[\varphi]_\tau(t):=\frac1{\tau}\int_t^{t+\tau}\varphi(s)ds$,
which can be proven by the H\"older inequality and the Fubini theorem as follows.
\begin{eqnarray}\label{Steklov}
  \|[\varphi]_\tau\|^2_{L^2(0,T-\tau)}
  \leq \frac1{\tau} \int_0^{T-\tau}\hspace{-1mm}\int_t^{t+\tau}\varphi^2(s)ds\, dt
  =\frac1{\tau}\int_0^{T-\tau}\hspace{-2mm} \int_0^{T-\tau}\hspace{-5mm}\jedna_{(t,t+\tau)}(s)\varphi^2(s)ds\, dt\nonumber
 \\ =\frac1{\tau}\int_0^{T-\tau}\hspace{-1mm} \varphi^2(s)\int_0^{T-\tau}\hspace{-1mm}\jedna_{(s-\tau,s)}(t)dt\, ds\leq \|\varphi\|^2_{L^2(0,T)} .\hspace{2cm}
  \end{eqnarray}

Analogously we can estimate  term $III$ using {\red $|\RR^{-T}|\leq C_{K,\alpha}$ and \\$|\partial_t \RR^{-T}|\leq  C_{K,\alpha}\|h_t(s)\|_{W^{1,\infty}(0,L)}$} as follows.
\begin{eqnarray}\label{est3}
  III&\hspace{-2mm}\leq\hspace{-2mm}&\int_0^{T-\tau}\hspace{-2mm}\int_D \int_t^{t+\tau}\hspace{-2mm}|(\partial_t\RR^{-T} h\uu_\kappa)(s)|\cdot\Big(|\RR\uu_\kappa(t+\tau)|+|\RR\uu_\kappa(t)|\Big) ds\, dy\, dt\ 
  \\\nonumber&\hspace{-2mm}\leq\hspace{-2mm}& C_{K,\alpha}\hspace{-1mm}\int_0^{T-\tau}\hspace{-3mm}\int_t^{t+\tau}\hspace{-4mm}\|h_t(s)\|_{W^{1,\infty}(0,L)} \|\uu_\kappa(s)\|_{L^2(D)}ds\||\uu_\kappa(t+\tau)|+|\uu(t)|\|_{L^2(D)} dt
  \\\nonumber&\hspace{-2mm}\leq\hspace{-2mm}& \tau  C_{K,\alpha} \|\uu_\kappa\|_{L^\infty(0,T;L^2( D))}^2\int_0^{T-\tau}[\|h_t\|_{W^{1,\infty}(0,L)}]_\tau dt \\\nonumber&\hspace{-2mm}\leq\hspace{-2mm}& \tau  C_{K,\alpha} \|\uu_\kappa\|_{L^\infty(0,T;L^2( D))}^2\|h_t\|_{L^2(0,T;W^{1,\infty}(0,L))} \leq c\tau.
\end{eqnarray}

\medskip

\noindent
Estimates  of remaining terms on the right-hand side of (\ref{WF4}) can be done analogously as in \cite[Section 5.1]{HLN14}, cf. also  \cite[Section 8]{FZ10}, however some further estimates has to be done due to new test functions containing $\RR$, cf.  (\ref{testfunctions1})$_1$. In the  viscous term (\ref{eij}),  
taking into account (\ref{assumptions}),   we additionally obtain terms of type
\begin{equation}\label{visc_est}
  \frac{\mu}{\rho}C_{K,\alpha}\int_0^{T-\tau} \int_t^{t+\tau} \int_D |\nabla \uu_\kappa(y,s)||\partial_{y_1}^2 h(y_1,\theta_1)||\uu_\kappa(y,\theta_2)|\, dy\, ds\, dt,\end{equation}
where $\theta_1:=t,\, t+\tau,$ or $s$, and $\theta_2:=t,$ or $t+\tau$. Rewriting the integral $\int_D dy=\int_0^L\int_0^1 dy_2 \, dy_1$, applying the H\"older inequality and Lemma \ref{lemma:essup}\footnote{valid  for non-solenoidal functions $\uu_\kappa\in \VV$ as well} we can bound the above  terms by
\begin{equation}
  \leq \frac{\mu}{\rho}C_{K,\alpha}\int_0^{T-\tau} \int_t^{t+\tau} \|\nabla \uu_\kappa(\theta_2)\|_{L^2(D)}\| h(\theta_1)\|_{H^2(0,L)}\|\nabla \uu_\kappa(s)\|_{L^2(D)}.\nonumber
  \end{equation}

Applying the H\"older inequality for time integrals and the property  (\ref{Steklov}) we  finally  get for (\ref{visc_est})
\begin{equation}
  \leq \tau \frac{\mu}{\rho}C_{K,\alpha}\| h(\theta_1)\|_{L^\infty(0,T;H^2(0,L))}\|\uu_\kappa\|^2_{L^2(0,T;W^{1,2}(D))}\leq c\tau.\nonumber
  \end{equation}

The convective term (\ref{4.3}) can be estimated using the property (\ref{becko-rozdiel}) by $c\tau$ applying  similar technique using Lemma \ref{lemma:essup}. Let us also recall, that the boundary term containing $\kappa$ is bounded by $c\tau$ as well, where $c$ is some constant independent on $\kappa$, for more details see \cite[Section 5.1]{HLN14}, \cite[Section 8]{FZ10}.

Finally,  due to estimates (\ref{est1}), (\ref{est2}), (\ref{est3}) and above described estimates of remaining terms
on the right-hand side of (\ref{WF4}), 
we can conclude that
\begin{equation}\label{est_final}
  \int_0^{T-\tau} \hspace{-2mm}\int_D\left[(\sqrt{h}  \uu_\kappa)(t+\tau)-(\sqrt{h} \uu_\kappa)(t)\right]^2 dy+\int_0^L [\sigma_\kappa(t+\tau)-\sigma_\kappa(t)]^2 \, dy_1\, dt \leq c\tau
  \end{equation}
with some constant $c$ dependent on $\alpha, K, T$, but  independent on $\veps$ and $\kappa$. 
The integral equicontinuity estimate (\ref{est_final})  together with the compactness argument \cite[Lemma 1.9]{AL} provide us the strong convergences $\uu_\kappa\to \uu$ in $L^r((0,T)\times D), 1\leq r<4$ and $\sigma_\kappa=\partial_t\eta_\kappa \to \sigma=\partial_t\eta$ in $L^2((0,T)\times (0,L)), 1\leq s<6$, see \cite[p. 223]{HLN14} or \cite[p. 36]{FZ10} for more details.  Finally, the limiting process  for  $\kappa \to \infty$ in (\ref{WF1}) with the test function $\xi(y_1,t)=E\pp_2(y_1,1,t)\in  H^2_0(0,L)$
 completes the proof of Theorem 5.1 \cite{HLN14}. 
\qquad $ \square$

\bigskip 



\bibliographystyle{elsarticle-num}
\bibliography{all}





\end{document}